\documentclass[a4paper,12pt]{amsart}
\usepackage{amsmath}
\usepackage{amsthm}
\usepackage{amsfonts}
\usepackage{amssymb}
\usepackage{multicol}
\usepackage{enumerate}
\usepackage{tikz}
\usepackage{tikz-cd}
\usetikzlibrary{cd}
\usepackage[unicode]{hyperref}

\newtheorem{thm}{Theorem}[section]
\newtheorem{cor}[thm]{Corollary}
\newtheorem{lemma}[thm]{Lemma}
\newtheorem{prop}[thm]{Proposition}

\theoremstyle{definition}
\newtheorem{remark}[thm]{Remark}

\numberwithin{equation}{section}

\def\al{\alpha}
\def\de{\delta}
\def\la{\lambda}
\def\si{\sigma}
\def\Om{\Omega}
\def\R{\mathbb{R}}
\def\C{\mathbb{C}}
\def\N{\mathbb{N}}
\def\J{\mathbb{J}}
\def\Tr{\mathrm{Tr}\,}
\def\Ker{\mathrm{Ker}\,}
\def\Ran{\mathrm{Ran}\,}
\def\Id{\mathbf{1}}
\def\cB{\mathcal{B}}
\def\cF{\mathcal{F}}
\def\cG{\mathcal{G}}
\def\cN{\mathcal{N}}
\def\cU{\mathcal{U}}
\def\cL{\mathcal{L}}
\def\supp{\mathrm{supp}}

\title[MOP related to difference operators]{Matrix orthogonal polynomials related to diffference operators for $L$-orthogonal polynomials}
\author{Erik Koelink}
\address{IMAPP, Radboud Universiteit, Nijmegen, The Netherlands}
\email{e.koelink@math.ru.nl}
\author{Ana Loureiro}
\address{School of Engineering, Mathematics and Physics, University of Kent – London and South East
University Group, Canterbury CT2 7NF, UK}
\email{a.loureiro@kent.ac.uk}
\author{Pablo Rom\'an}
\address{FaMAF-CIEM, Universidad Nacional de C\'ordoba, Argentina}
\email{pablo.roman@unc.edu.ar}

\date{\today}

\begin{document}

\begin{abstract} The difference operator $J$ arising from the higher order occurrence for the $L$-orthogonal polynomials is studied using various families of matrix orthogonal polynomials. The matrix orthogonal polynomials determine the polar decomposition of $J$ and its adjoint $J^\ast$ considered as operators on $\ell^2(\N)$. The special case of symmetric 
$J$ leads to the case where the matrix polynomials reduce to diagonal matrix polynomials. The example of 
the symmetric case with constant coefficients for $L=2$ is discussed in detail. 
\end{abstract}

\maketitle

%%%%%%%%%%%%%%%%%%%%%%%%%%%%%%%%%
%%%%%%%%%%%%%%%%%%%%%%%%%%%%%%%%%
%%%%%%%%%%%%%%%%%%%%%%%%%%%%%%%%%
%%%%%%%%%%%%%%%%%%%%%%%%%%%%%%%%%

\section{Introduction}\label{sec:intro}

Orthogonal polynomials on the real line, say $(p_n)_{n\in \N}$, satisfy a three-term recurrence relation of the form 
\begin{equation}\label{eq:3TR-OPorthonormal}
\begin{split}
x\, p_n(x) &\, = a_n p_{n+1}(x) + b_n p_n(x) + a_{n-1} p_{n-1}(x), \qquad n\geq 1, \\
x\, p_0(x) &\, = a_0 p_{1}(x) + b_0 p_0(x),
\end{split}
\end{equation}
where $a_n>0$ and $b_n\in \R$ for all $n\in \N$ in case the polynomials are normalised to be orthonormal. Various properties of the  polynomials can then be studied by understanding the operator $J$ defined on the Hilbert space $\ell^2(\N)$ by 
\begin{equation}\label{eq:Jacobioperator-OPorthonormal}
\begin{split}
\bigl(Jf\bigr) (n) &\, = a_n f(n+1) + b_n f(n) + a_{n-1} f(n-1), \qquad n\geq 1, \\
\bigl(Jf\bigr) (0) &\, = a_0 f(1) + b_0 f(0),
\end{split}
\end{equation}
where we view an element $f\in \ell^2(\N)$ as a function   $f\colon \N\to \C$ with 
$\sum_{n\in \N} |f(n)|^2<\infty$. Assuming that the sequences 
$(a_n)_{n\in \N}$ and $(b_n)_{n\in \N}$ are bounded, $J$ is a bounded self-adjoint operator which can be considered as an infinite tridiagonal matrix. The operator $J$ contains much  information on the polynomials, such as the orthogonality measure, and can be used to prove Favard's theorem, stating that polynomials satisfying a recurrence of the 
form \eqref{eq:3TR-OPorthonormal} with initial value $p_0(x)=1$, are orthogonal polynomials on the real line. For more information we refer to \cite{Chih}, \cite{DunfS}, \cite{Isma}, \cite{Schm}, and review papers \cite{Koel}, \cite{Simo},   
where one can also find the case of unbounded sequences and other extensions. 

A natural question is to ask what happens if one drops the conditions $a_n>0$, $b_n\in \R$ in \eqref{eq:3TR-OPorthonormal}, and consider the analogue  
\begin{equation}\label{eq:general3TR}
x\, p_n(x) = a_n p_{n+1}(x) + b_n p_n(x) + c_n p_{n-1}(x), \end{equation}
for more general sequences $(a_n)_{n\in \N}$, $(b_n)_{n\in \N}$ and $(c_n)_{n\in \N}$. We use the convention $c_0=0$ in order to avoid writing down the case $n=0$ separately. Since we want polynomials of any degree, we need to assume $a_n\not=0$ for all $n\in \N$. By multiplying with a phase factor, we may still assume $a_n>0$ for all $n\in\N$. We can associate similarly a tridiagonal operator $J$ on $\ell^2(\N)$ to the recurrence \eqref{eq:general3TR}. 
In case $c_n=a_{n-1}$ and $b_n\in \C$, the corresponding operator $J$ and its spectral properties have been studied in detail by Pushnitski and \v{S}tampach \cite{PushS} in the bounded case and extended by 
Eichinger, Luki\'c and Young \cite{EichLY} to the unbounded case. The essential part is that complex conjugation intertwines $J$ and $J^\ast$ in this case. For the general case of complex Jacobi matrices related to the general case of \eqref{eq:general3TR} we refer to the overview paper by 
Beckermann \cite{Beck}. See also the references in \cite{Beck},
\cite{EichLY}, \cite{PushS}. 

For this paper two generalisations of \eqref{eq:3TR-OPorthonormal} are relevant. The first one is matrix orthogonal polynomials on the real line with respect to a matrix valued measure in the positive definite matrices. In this case there is a three-term recurrence as in \eqref{eq:3TR-OPorthonormal}, but now the coefficients $a_n$ and $b_n$ are $N\times N$-matrices and the polynomials $p_n$ take their values in the $N\times N$-matrices as well. In the corresponding case the corresponding operator can be viewed as a self-adjoint operator acting on $\ell^2(\N;\C^N)$, i.e. square summable functions taking their values in the $N$-dimensional vector space $\C^N$. Its spectral decomposition can be used to analyse 
self-adjoint $2N+1$-recurrence relations, extending the case $N=1$ which corresponds to \eqref{eq:3TR-OPorthonormal}. This is due to 
Dur\'an and Van Assche \cite{DuraVA}. Matrix orthogonal polynomials appear in various other fields of mathematics as well, and we use the overview paper by Damanik, Pushnitski and Simon \cite{DamaPS} as a reference. The paper \cite{DamaPS} contains a wealth of references up to 2008, including references to pioneering work by Krein. Since the notation for matrix orthogonal polynomials and various results are important, we present a brief overview of matrix orthogonality and matrix polynomials in Section \ref{sec:recapMOP}.  

The second generalisation of  \eqref{eq:3TR-OPorthonormal} relevant for this paper is the extension to more terms involving lower degree polynomials, the so-called $L$-orthogonal polynomials. So from now on the polynomial sequence $(p_n)_{n\in \N}$ is generated by 
\begin{equation}\label{eq:recursionscalarLorthopols}
x\, p_n(x) = a_n p_{n+1}(x) + \sum_{k=0}^L b_{n,k} p_{n-k}(x) 
\end{equation}
with initial value $p_0(x)=1$. Here $L$ is fixed and the case 
$L=1$ gives a three-term recurrence, but not necessarily a self-adjoint one. 
We assume that $a_n>0$ for all $n\in \N$, $b_{n,k}=0$ for $n<k\leq L$
and $b_{n,L}\not=0$ for all $n\geq L$. This assumption avoids needing to split up the recurrence \eqref{eq:recursionscalarLorthopols} for general $n$ and the initial $L$ cases. In case $L=1$, we arrive at \eqref{eq:general3TR}. 
Note that $L\in \N$, and in general we assume $L>0$, although we discuss the case $L=0$ in 
Remark \ref{rmk:caseLis0:sec:MOPrelatedtoJ} to connect to $LU$-factorisations of Jacobi matrices and in Remark \ref{rmk:caseLis0:sec:relationbetweenMOP}. . 
The polynomials defined by \eqref{eq:recursionscalarLorthopols} are known as $L$-orthogonal polynomials, as studied by Maroni \cite{Maro}, and often referred to as vector orthogonal polynomials following the work of Van Iseghem \cite{VIse87}. These polynomials fit into the multiple orthogonal polynomials that are important in Hermite-Pad\'e approximation, see e.g. the overview papers by L\'opez-Lagomasino \cite{LopeL} and Van Assche \cite[Ch.~23]{Isma} and references therein. 

So we now fix the corresponding operator $J$ for the remainder of the paper. Initially, we define $J$ on more general function spaces before restricting to suitable Hilbert spaces. 
So let $\cF(\N) = \{ f\colon \N \to \C\}$ and more generally 
$\cF(\N;\C^N) = \{ f\colon \N \to \C^N\}$ be the space of functions and $\C^N$-vector-valued functions on $\N$. Associated to \eqref{eq:recursionscalarLorthopols} is the linear operator
$J\colon \cF(\N) \to \cF(\N)$ defined by 
\begin{equation}\label{eq:defoperatorJassoctopols}
\bigl( Jf \bigr) (n) =  a_n f(n+1) + \sum_{k=0}^L b_{n,k} f(n-k)
\end{equation}
for $f\in \cF(\N)$. We assume that the sequences are bounded, i.e. 
$\sup_{n\in \N} \bigl( |a_n| + \sum_{k=0}^L |b_{n,k}|\bigr) <\infty$, so that $J$ defines a bounded operator when restricted to $\ell^2(\N)$. Its adjoint gives rise to 
$J^\ast \colon \cF(\N) \to \cF(\N)$ defined by 
\begin{equation}\label{eq:defoperatorJstarassoctopols}
\bigl( J^\ast f \bigr) (n) =  \sum_{k=0}^L \overline{b_{n+k,k}} f(n+k)
+ a_{n-1} f(n-1)
\end{equation}
for $f\in \cF(\N)$,  where we follow the convention that $a_{-k}=0$ and $f(-k)=0$ for $k\geq 1$ and $f\in \cF(\N)$. 
Note that for $L>1$, the map $J$ can not be a self-adjoint operator. Moreover, there is no straightforward connection between $J$ and $J^\ast$ as in \cite{PushS}, \cite{EichLY}. 

The first goal of the paper is to connect the map $J$ to three related families of matrix orthogonal polynomials of size $(L+1)\times(L+1)$. 
This is done in Section \ref{sec:MOPrelatedtoJ} after rewriting 
$J$ and $J^\ast$ in terms of difference operators on 
$\cF(\N;\C^{L+1})$ in Section \ref{sec:differenceoperators}. 
The matrix orthogonal polynomials are associated to 
$\begin{pmatrix} 0  & J \\ J^\ast & 0 \end{pmatrix}$, $JJ^\ast$ and $J^\ast J$. 
In Section \ref{sec:relationbetweenMOP} we relate these three families of matrix orthogonal polynomials as well as the Stieltjes transforms of the matrix orthogonality measures, or the Weyl $M$-matrix function for these measures. In 
Section \ref{sec:polardecompositions} we present the polar decompositions of $J$ and $J^\ast$ in full detail. The partial isometry gives rise to a connection between the Stieltjes transforms, which can be viewed as matrix analogues of well-known connections for symmetric scalar orthogonal polynomials. 

In the last two sections we consider two special cases. In 
Section \ref{sec:symmetriccase} we consider the symmetric case of 
the $L$-orthogonal polynomials \eqref{eq:recursionscalarLorthopols}, which means that 
$b_{n,k}=0$ unless $k=L$ so that only two terms remain in the right hand side of \eqref{eq:recursionscalarLorthopols}. For $L=1$, this corresponds to symmetric scalar orthogonal polynomials in \eqref{eq:general3TR}. In the general case, we find that all relevant marix orthogonal polynomials reduce to scalar orthogonal polynomials on the diagonal. 
The symmetric case is relevant, since the recursion \eqref{eq:recursionscalarLorthopols} can be solved in terms of scalar orthogonal polynomials, see 
\cite{LourVA} for the case $L=2$, which can be generalised to the case of general $L$. 
In Section \ref{sec:klr-constant-symmetric} we discuss an example of a symmetric case in more detail for $L=2$. We assume that the constants $a_n$ and $b_{n,2}$ are independent of $n$, 
and that the other constants vanish. The matrix polynomials are diagonal and we can solve the diagonal terms in linear combinations of Chebyshev polynomials. It is remarkable that in this case also a discrete mass point at zero occurs in either 
the spectral decomposition of $JJ^\ast$ or that of $J^\ast J$. 
The more general case of constant coefficients in 
\eqref{eq:recursionscalarLorthopols} can be approached using the techniques of \cite{Roma}, but we will not discuss this here in order to keep the size of the paper within reasonable bounds. 
Note that for $L>1$ the operator $J$ is not normal, but in case of the constant coefficients the operator $J$ is essentially normal, i.e. $JJ^\ast - J^\ast J$ is a compact operator, see 
\cite{Davi} for an overview of these operators. This is clear in the example discussed in Section \ref{sec:klr-constant-symmetric} and is more generally true for the case of constant coefficients. 

We stress that this paper is a study of the operator $J$, and not so much a study of the $L$-orthogonal polynomials $(p_n)_{n\in \N}$ defined by \eqref{eq:recursionscalarLorthopols}. There is a connection between these polynomials and the matrix orthogonal polynomials introduced in Section \ref{sec:MOPrelatedtoJ}. 
This also involves the dual sequence, see \cite{LourVA}, \cite{Maro}. These results will be discussed in a forthcoming paper \cite{KoelLR}. 

It is a natural question, following \cite{PushS}, see also \cite{EichLY}, if one can characterise the operator $J$ in terms of some of the spectral information obtained. Moreover, it would be useful to be able to generalise to the unbounded case.

We have verified statements in the paper  for specific values of $L$ and parameters using Python and Maple, especially for 
Section \ref{sec:symmetriccase} and Section  
\ref{sec:klr-constant-symmetric}.

\medskip
\textbf{Acknowledgements.} 
The research for this paper was started at the HOPE conference organised by Wadim Zudilin and Maarten van Pruijssen, and we thank them for bringing us together. We have benefitted from the Isaac Newton Institute (INI) Retreat programme in Cambridge (EK and AL), and from the 
Research-in-Groups programme at the International Centre for Mathematical Sciences (ICMS) in Edinburgh. We warmly thank INI and ICMS for their excellent hospitality and excellent working conditions. Moreover, we thank the Dutch funding agency NWO under project OCENW.M20.108 for funding a research trip of PR to Nijmegen. 

%%%%%%%%%%%%%%%%%%%%%%%%%%%%%%%%%
%%%%%%%%%%%%%%%%%%%%%%%%%%%%%%%%%
%%%%%%%%%%%%%%%%%%%%%%%%%%%%%%%%%
%%%%%%%%%%%%%%%%%%%%%%%%%%%%%%%%%

\section{Brief recap of matrix orthogonal polynomials}\label{sec:recapMOP}

This section is meant to recall the interaction between matrix measures and 
matrix orthogonal polynomials via the spectral analysis of the matrix three-term recursion. It is also meant to fix some notation. This material is mainly based on the lecture notes by Berg \cite{Berg} and the overview paper by Damanik, Pushnitski and Simon \cite{DamaPS}, see also \cite{DunfS}. 

%%%%%%%%%%%%%%%%%%%%%%%%%%%%%%%%%
%%%%%%%%%%%%%%%%%%%%%%%%%%%%%%%%%
\subsection{Matrix measures}\label{ssec:matrixmeasure} A matrix measure is a $\si$-additive map 
$\mu \colon \cB(\R) \to M_N(\C)$ from the Borel sets on $\R$ to $N\times N$-matrices with 
complex entries denoted by $M_N(\C)$. Moreover, we assume that $\langle \mu(A)x, x\rangle_{\C^N} \geq 0$ 
for all $A\in \cB(\R)$ and all $x\in \C^N$. Then $A\mapsto \Tr(\mu(A))$ gives a 
positive  Borel measure on $\R$, which is denoted $\mu_\Tr$. Picking an 
orthonormal basis $(e_i)_{i=1}^N$ of $\C^N$, the map $A\mapsto 
\mu_{i,j}(A) = \langle \mu(A)e_j,e_i\rangle_{\C^N}$ is a complex Borel measure on $\R$, which is absolutely continuous with respect to $\mu_\Tr$ and we put 
$W_{i,j} = \frac{d\mu_{i,j}}{d\mu_\Tr}$ for the Radon-Nikodym derivative and 
we define the $N\times N$-matrix valued function $W(x) = (W_{i,j}(x))_{i,j=1}^N$. 
Then $W$ is positive definite $\mu_\Tr$-a.e. and moreover  
$0\leq W \leq \Id$ again $\mu_\Tr$-a.e.. 
Then for a measurable $f\colon \R \to \C^N$ we say that $f\in \cL^2(\mu; \C^N)$ if  
\[
\cN(f) = 
\int_\R f^\ast (x) \, d\mu(x) f(x) = 
\int_\R f^\ast (x) \, W(x) \, f(x) \, d\mu_\Tr(x) < \infty.  
\]
For $f,g\in \cL^2(\mu; \C^N)$ we define the sesquilinear form 
$\langle f, g\rangle = \int_\R g^\ast (x) \, d\mu(x) f(x)$. 
Then $L^2(\mu; \C^N)$, the completion of $\cL^2(\mu; \C^N)/ \Ker \cN$, is the corresponding 
Hilbert space. 

%%%%%%%%%%%%%%%%%%%%%%%%%%%%%%%%%
%%%%%%%%%%%%%%%%%%%%%%%%%%%%%%%%%
\subsection{Matrix orthogonal polynomials}\label{ssec:matrixOP}
Next we assume that $x\mapsto x^n W_{i,j}(x)$ is in $\cL^1(\mu_\Tr)$ for all $n\in \N$ and all $i,j\in \{1,\cdots, N\}$, so that 
we have finite moments for the matrix measure $\mu$. In particular, 
a polynomial $q\colon \R \to \C^N$, i.e.  each entry $q_i\in \C[x]$, is 
in $\cL^2(\mu; \C^N)$. We assume that $\cN(q)=0$ if and only if $q$ is the zero polynomial. 
For matrix polynomials $P, Q \in M_N(\C)[x]$, we define a matrix valued inner product by 
\begin{equation}
\langle P, Q\rangle = \int_\R P(x) \, W(x)\, (Q(x))^\ast\, d\mu_\Tr(x)
\end{equation}
so that $\langle P, Q\rangle_{i,j}$ is the inner product of the adjoint of 
the $j$-th column of $Q$ and the adjoint of the  $i$-th column of $P$. Then 
$\langle P,Q\rangle = \langle Q, P\rangle^\ast$, 
$\langle P+R,Q\rangle = \langle P,Q\rangle + \langle R,Q\rangle$,  
$\langle AP,Q\rangle = A \langle P,Q\rangle$ for $P,Q,R\in M_N(\C)[x]$ and 
$A\in M_N(\C)$. Moreover, $\langle P,P\rangle \geq 0$ and 
$\langle P,P\rangle = 0$ if and only if $P=0$, so that the matrix inner product is non-degenerate. 
There exists a sequence of polynomials $(P_n)_{n\in \N}$ with 
degree of $P_n$ equal to $n$ with $P_n(x) = \sum_{k=0}^n A_k(n) x^k$, $A_k(n)\in M_N(\C)$ and 
$A_n(n)$ invertible, so that $\langle P_n, P_m\rangle = \de_{m,n}\Id$. 
Since multiplication by $x$ is symmetric, i.e. $\langle xP,Q\rangle = \langle P,xQ\rangle$, we find that the orthonormal matrix polynomials satisfy a three-term recurrence relation of the form 
\begin{equation}\label{eq:genMOP3termrecurrence}
x\, P_n(x) = A_n P_{n+1}(x) + B_n P_n (x) + A_{n-1}^\ast P_{n-1}(x)
\end{equation}
with the convention that $P_{-1}(x)=0$, where $A_n$ is an invertible and 
$B_n$ is a self-adjoint $N\times N$-matrix for all $n\in \N$. 
We assume the convention that $A_{-1}=0$. Note that with 
the sequence $(P_n)_{n\in \N}$ of orthonormal polynomials also 
the sequence $(U_nP_n)_{n\in \N}$ are orthonormal polynomials for the same measure if $U_n$ is a unitary matrix for all $n\in \N$. 
In particular, $Q_n(x)=U_nP_n(x)$ satisfies 
\begin{equation}\label{eq:genMOP3termrecurrence-unitary}
x\, Q_n(x) = U_n A_n U_{n+1}^\ast  Q_{n+1}(x) + U_n B_n U_n^\ast Q_n (x) + U_n A_{n-1}^\ast U_{n-1}^\ast Q_{n-1}(x). 
\end{equation}
In particular, if the coefficients in 
\eqref{eq:genMOP3termrecurrence} are constant, then this is in general not true for the coefficients in 
\eqref{eq:genMOP3termrecurrence-unitary}. For this reason, matrix orthogonal polynomials are often normalised as monic polynomials, i.e. the leading coefficient is the identity matrix, but this does not suit viewing the corresponding difference operators as 
self-adjoint operators. 

%%%%%%%%%%%%%%%%%%%%%%%%%%%%%%%%%
%%%%%%%%%%%%%%%%%%%%%%%%%%%%%%%%%
\subsection{Difference equations associated to matrix orthogonal polynomials}\label{ssec:differenceeqtnmatrixOP}
Associated to \eqref{eq:genMOP3termrecurrence} we have the 
operator $S$ on the space $\cF(\N;\C^N)$ of functions $F\colon \N \to \C^N$ given 
by 
\begin{equation}\label{eq:genMOPrecurrenceoperator}
\bigl( S F\bigr)(n) = A_n F(n+1) + B_n F(n) + A_{n-1}^\ast F(n-1)
\end{equation}
with $F(-k)=0$ for $k\geq 1$. By comparing \eqref{eq:genMOPrecurrenceoperator} with \eqref{eq:genMOP3termrecurrence} we see that $F(n) = P_n(z) \xi$ for 
fixed $\xi\in \C^N$ is a solution to $SF=zF$. Assuming that the operator 
norms of $A_n$ and $B_n$ are bounded independent of $n\in \N$, we can view 
$S \colon \ell^2(\N;\C^N)\to \ell^2(\N;\C^N)$ as a bounded self-adjoint operator. Here $\ell^2(\N;\C^N)$ is the space of square
summable $\C^N$-valued functions on $\N$, i.e.  
$\ell^2(\N;\C^N) =\{ F\colon \N \to \C^N\mid 
\sum_{n\in \N} \langle F(n), F(n)\rangle_{\C^N}<\infty\}$. 
Define 
$F_{k,\xi} \colon \N \to \C^N$, $F_{k,\xi}(n) =\de_{k,n}\xi$, for $\xi\in \C^N$.
Picking an orthonormal basis $(e_i)_{i=1}^N$ of $\C^N$, then $\{ F_{0,e_i} \mid 
1\leq i \leq N\}$ is a cyclic set for $S$, i.e. the 
closure of the linear span of elements of $S^p F_{0,e_i}$ for 
$p\in \N$ and $1\leq i\leq N$ gives the Hilbert space $\ell^2(\N;\C^N)$. Indeed, this follows from 
\begin{equation}\label{eq:genMOPrecurrenceoperatoronFkxi}
S F_{k,\xi} = F_{k-1, A_{k-1}\xi} + F_{k,B_k\xi} +
F_{k+1, A_{k}^\ast\xi},
\end{equation}
the invertibility of $A_n$ for $n\in \N$ and that  
$\{ F_{k,e_i} \mid k\in \N, 1\leq i \leq N\}$ forms an orthonormal basis for $\ell^2(\N;\C^N)$. More precisely, we can 
write any element $F_{k,\xi} = \sum_{p\in \N} \sum_{i=1}^N 
\al_{p,i} S^p F_{0,e_i}$ as a finite sum where $\al_{p,i}\in \C$. 
By the spectral theorem 
there exists a matrix measure $\mu$ and a unitary map 
$\cF \colon \ell^2(\N;\C^N) \to L^2(\mu; \C^N)$ intertwining $S$ with the multiplication 
operator. Then $\cF$ is given by $\bigl( \cF F_{k,\xi}\bigr)(x) 
= Q_k(x)\xi$, where $Q_k(z) = P_k^\ast(z) = \bigl( P_k(\overline{z})\bigr)^\ast$, where the last $\ast$ denotes the adjoint of a matrix.  

Conversely, given a 
self-adjoint operator $S$ of the form \eqref{eq:genMOPrecurrenceoperator} for uniformly bounded matrices $A_n$ and $B_n$, there exists a non-degenerate inner
product which can be written in terms of a matrix measure. This is the analogue of Favard's theorem in this case. In order to descibe this, we let 
$P_n^{(1)}(x)$ be the associated matrix polynomials to \eqref{eq:genMOP3termrecurrence}, i.e. satisfying \eqref{eq:genMOP3termrecurrence} for $n\geq 1$ with initial condition 
$P_0^{(1)}(x)=0$ and $P_1^{(1)}(x)=A_0^{-1}$. In particular, the associated 
polynomials satisfy 
\begin{equation}\label{eq:genMOP3associatedtermrecurrence}
\Id\de_{n,0} + x\, P^{(1)}_n(x) = A_n P^{(1)}_{n+1}(x) + B_n P^{(1)}_n (x) + A_{n-1}^\ast P^{(1)}_{n-1}(x), \qquad n\in \N. 
\end{equation}
Then there exists a asymptotically free solution $F_z(n) = 
P^{(1)}_n(z) + P_n(z)M(z)$, such that $n\mapsto F_z(n)\xi$ in $\ell^2(\N;\C^N)$ satisfies  
$\bigl( SF_z\bigr) (n) = zF_z(n)$ for $n\geq 1$. Then the resolvent $R_z=(S-z)^{-1}$, $z\not\in \si(S)$,  for 
$S$ can be obtained in terms of $P_n(z)$ and $F_n(z)$ using the Liouville-Ostrogradskii identities. Then $M\colon \C\setminus \R \to M_N(\C)$ is the Stieltjes transform of the matrix measure:
\begin{equation}
M(z) = \int_\R \frac{1}{x-z} \, d\mu(x), \qquad z\in \C\setminus \R. 
\end{equation}
This coincides with the upper left $N\times N$-block of the resolvent. 
By the Stieltjes-Perron inversion formula one obtains the matrix measure 
$\mu$ from its Stieltjes transform $M$, which in turn follows from the 
asymptotically free solution.

%%%%%%%%%%%%%%%%%%%%%%%%%%%%%%%%%
%%%%%%%%%%%%%%%%%%%%%%%%%%%%%%%%%
%%%%%%%%%%%%%%%%%%%%%%%%%%%%%%%%%
%%%%%%%%%%%%%%%%%%%%%%%%%%%%%%%%%

\section{Difference operators}\label{sec:differenceoperators}

In this section we consider the difference operators 
$J$ and $J^\ast$ as defined in \eqref{eq:defoperatorJassoctopols} 
and \eqref{eq:defoperatorJstarassoctopols}. 
The difference operator is rewritten in terms of a second order matrix recurrence operator, and we introduce a self-adjoint operator $\J$ to $J$ and $J^\ast$ as in \cite[Thm.~7.3.7]{HornJ}, 
\cite{PushS}. 

Associated to $J$ and $J^\ast$ we introduce 
$\J \colon \cF(\N) \oplus \cF(\N) \to \cF(\N) \oplus \cF(\N)$ defined by 
\begin{equation}
\J  = \begin{pmatrix} 0 & J \\ J^\ast & 0 \end{pmatrix}, 
\quad 
\Bigl( \J \begin{pmatrix} f \\ g \end{pmatrix} \Bigr) (n) =
\begin{pmatrix} a_n g(n+1) + \sum_{k=0}^L b_{n,k} g(n-k) \\ 
\sum_{k=0}^L \overline{b_{n+k,k}} f(n+k)
+ a_{n-1} f(n-1)\end{pmatrix}.
\end{equation}
Since we assume bounded coefficients, $\J \colon \ell^2(\N)\oplus \ell^2(\N)\to \ell^2(\N) \oplus \ell^2(\N)$ is a bounded self-adjoint operator. 

We want to rewrite the operators in terms of operators on $\cF(\N;\C^N)$ for suitable $N$. Note that $\J^2$ involves the operators $JJ^\ast$ and $J^\ast J$, which are self-adjoint operators on $\ell^2(\N)$. 
Since $JJ^\ast$ and $J^\ast J$ act as 
$2L+3$-recurrence operators, we want to identify with 
$(L+1)\times(L+1)$-matrix orthonormal polynomials following \cite{DuraVA}. This is done in Section \ref{sec:MOPrelatedtoJ}. 
In order to write $J$, $J^\ast$ and $\J$ as recursion operators 
on $\cF(\N;\C^{L+1})$, we use Lemma \ref{lem:fromTonFNtoTonFNCN} whose proof is straightforward, see \cite{DuraVA} for the case of a self-adjoint operator $T$.

\begin{lemma}\label{lem:fromTonFNtoTonFNCN}
Let $T\colon \cF(\N) \to \cF(\N)$ be an operator 
of the form 
$(T f)(n) = \sum_{k=-N}^N \al^n_{k}\, f(n+k)$
with the convention $\al^n_k=0$ if $n+k<0$. 
The spaces $\cF(\N)$ and $\cF(\N; \C^N)$ are identified via 
\begin{align*}
\iota \colon \cF(\N) \to \cF(\N; \C^N), 
\qquad F(n) =  \iota(f)(n) = 
\begin{pmatrix} f(nN) \\f(nN+1) \\ \vdots \\ f(nN+N-_i1) \end{pmatrix}
= \sum_{i=0}^{N-1} f(nN+i) v_i,
\end{align*}
where $(v_i)_{i=0}^{N-1}$ is the standard orthonormal basis of $\C^N$.
In particular, 
$\langle \iota(f)(n), v_r\rangle_{\C^N} = f(nN+r)$ and 
$(\iota^{-1} F)(p) = \langle F(n), v_r\rangle_{\C^N}$, for $p=nN+ r$, $0\leq r <N$. 
Then  
$$((\iota \circ T \circ \iota^{-1}) F)(p) = 
A_p F(p+1) + B_p F(p) + C_p F(p-1)$$ for $F\in \cF(\N;\C^N)$
with $A_p$ a lower triangular $N\times N$-matrix, $C_p$ an upper triangular $N\times N$-matrix and 
$B_p$ a $N\times N$-matrix given for $p\in \N$ by 
\begin{align*}
(A_p)_{i,j} = \begin{cases}
0, & j> i,  \\
\al^{pN+i}_{j-i+N}, & j\leq i, 
\end{cases}, \qquad 
(C_p)_{i,j} = \begin{cases}
0, & j< i,  \\
\al^{pN+i}_{j-i-N}, & j\geq i, 
\end{cases}\qquad 
(B_p)_{i,j} = \al^{pN+i}_{j-i}
\end{align*}
with the convention $C_0=0$, which is in line with the convention 
$\al^n_k=0$ for $n+k<0$. 
Here the matrix entries are with respect to the standard 
orthonormal base $(v_i)_{i=0}^{N-1}$ of $\C_N$, so 
$(A_p)_{i,j} = \langle A_p v_j, v_i\rangle_{\C^N}$, etc. 
\end{lemma}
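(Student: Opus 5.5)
The plan is to compute $(\iota\circ T\circ\iota^{-1})F$ componentwise: fix $p\in\N$ and $0\leq i<N$, and pair the vector $((\iota\circ T\circ \iota^{-1})F)(p)$ with $v_i$. Put $f=\iota^{-1}F$. By the definition of $\iota$, this component equals $(Tf)(pN+i)=\sum_{k=-N}^N \al^{pN+i}_k f(pN+i+k)$.

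Next I reindex the sum by $j=i+k$, so $j$ ranges over $i-N\leq j\leq i+N$. Since $0\leq i\leq N-1$, this range lies inside $\{-N,\dots,2N-1\}$. I split it into three blocks:
\begin{itemize}
\item for $-N\leq j\leq -1$, write $j=j'-N$ with $0\leq j'\leq N-1$; then $f(pN+j)=f((p-1)N+j')=\langle F(p-1),v_{j'}\rangle$;
\item for $0\leq j\leq N-1$, we have $f(pN+j)=\langle F(p),v_j\rangle$;
\item for $N\leq j\leq 2N-1$, write $j=j'+N$ with $0\leq j'\leq N-1$; then $f(pN+j)=\langle F(p+1),v_{j'}\rangle$.
\end{itemize}

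In each block the coefficient is $\al^{pN+i}_{k}$ with $k=j-i$.
\begin{itemize}
\item In the $F(p+1)$ block, $k=j'-i+N$. The constraint $k\leq N$ forces $j'\leq i$, so only lower triangular entries occur, namely $(A_p)_{i,j'}=\al^{pN+i}_{j'-i+N}$ for $j'\leq i$ and $0$ otherwise.
\item In the $F(p-1)$ block, $k=j'-i-N\geq -N$ forces $j'\geq i$, giving the upper triangular $C_p$ with $(C_p)_{i,j'}=\al^{pN+i}_{j'-i-N}$.
\item In the middle block, $k=j-i$ ranges over $-(N-1),\dots,N-1$ with no restriction, giving the full matrix $(B_p)_{i,j}=\al^{pN+i}_{j-i}$.
\end{itemize}
Summing the three blocks gives exactly the $i$-th component of $A_pF(p+1)+B_pF(p)+C_pF(p-1)$. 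Since this holds for every $i$, the vector identity follows.

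The only delicate point is the boundary $p=0$, where the $F(p-1)=F(-1)$ term would refer to negative indices. Here every term in that block has $n+k=i+j'-i-N=j'-N<0$, so $\al^{n}_k=0$ by the stated convention, and hence $C_0=0$, which is consistent with the convention $F(-1)=0$. One should also observe that when $k=\pm N$ the corresponding index sits on the diagonal of $A_p$ or $C_p$ (namely $j'=i$), so no term of the sum is counted twice and none is lost. Checking this bookkeeping of index ranges is the main thing to get right; the rest is a direct substitution.
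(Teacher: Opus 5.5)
Your proposal is correct and is the direct componentwise computation the paper has in mind; the paper gives no proof beyond calling it straightforward and pointing to Dur\'an--Van Assche for self-adjoint $T$. Splitting $j=i+k$ into the three blocks that give $C_p$, $B_p$, $A_p$, reading off the triangularity from $|k|\leq N$, and using $\al^n_k=0$ for $n+k<0$ at $p=0$ to get $C_0=0$ is all the argument needs.
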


So we apply Lemma \ref{lem:fromTonFNtoTonFNCN} with $N=L+1$ to $J$ and $J^\ast$. Putting 
$J' = \iota \circ J \circ \iota^{-1} \colon \cF(\N;\C^{L+1}) \to \cF(\N;\C^{L+1})$ 
we get the following explicit expression. 

\begin{cor}\label{cor:lem:fromTonFNtoTonFNCN}
$J' \colon \cF(\N;\C^{L+1}) \to \cF(\N;\C^{L+1})$ is given by 
$$\bigl( J' F\bigr) (p) = X_p F(p+1) + Y_p F(p) + Z_p F(p-1)$$ with
$(X_p)_{i,j} = \de_{i,L}\de_{j,0} a_{p(L+1)+L}$, 
$(Z_p)_{i,j} = b_{p(L+1)+i, L+1+i-j}$ for $j>i$ and 
$(Z_p)_{i,j} = 0$ for $j\leq i$, 
$(Y_p)_{i,j} = b_{p(L+1)+i, i-j}$ for $i\geq j$,  
$(Y_p)_{i,i+1} = a_{p(L+1)+i}$ for $0\leq i <L$, and $(Y_p)_{i,i+1} =0$ for $j>i+1$.
In general the indices range in $0\leq i, j \leq L$.
\end{cor}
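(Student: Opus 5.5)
The plan is to apply Lemma \ref{lem:fromTonFNtoTonFNCN} with $N=L+1$ to the operator $T=J$ defined in \eqref{eq:defoperatorJassoctopols}, after writing $J$ in the form required by the lemma. Comparing $(Jf)(n)=a_nf(n+1)+\sum_{k=0}^L b_{n,k}f(n-k)$ with $(Tf)(n)=\sum_{k=-(L+1)}^{L+1}\al^n_k f(n+k)$, I would read off the coefficients
\[
\al^n_1 = a_n, \qquad \al^n_{-k} = b_{n,k} \ (0\leq k\leq L), \qquad \al^n_k = 0 \ \text{otherwise},
\]
so in particular $\al^n_k=0$ for $k\geq 2$ and for $k=-(L+1)$. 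The convention $\al^n_k=0$ for $n+k<0$ is consistent with the standing assumption $b_{n,k}=0$ for $n<k$.

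Next I would substitute these values into the three formulas of the lemma, with $n=p(L+1)+i$ and $0\leq i,j\leq L$. For $X_p=A_p$: $(A_p)_{i,j}=\al^{p(L+1)+i}_{j-i+L+1}$ for $j\leq i$, and the lower index $j-i+L+1$ lies in $\{1,\dots,L+1\}$. It equals $1$ exactly when $i=L$, $j=0$, giving $a_{p(L+1)+L}$; all other entries vanish, since then the lower index is at least $2$. This yields $(X_p)_{i,j}=\de_{i,L}\de_{j,0}a_{p(L+1)+L}$. For $Z_p=C_p$: $(C_p)_{i,j}=\al^{p(L+1)+i}_{j-i-L-1}$ for $j\geq i$. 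The lower index is $-k$ with $k=L+1+i-j\in\{1,\dots,L+1\}$. The case $j=i$ gives $k=L+1$ and hence $0$, while $j>i$ gives $b_{p(L+1)+i,L+1+i-j}$, as claimed. For $Y_p=B_p$: $(B_p)_{i,j}=\al^{p(L+1)+i}_{j-i}$. This equals $b_{p(L+1)+i,i-j}$ for $j\leq i$ (lower index $-(i-j)$ with $0\leq i-j\leq L$), equals $a_{p(L+1)+i}$ for $j=i+1$ (only possible when $i<L$), and is $0$ for $j>i+1$.

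The main point needing care is the bookkeeping of the index ranges. One must check that each $k$ with $\al^n_k\neq0$ is hit exactly once among $A_p,B_p,C_p$, which holds because $k$ ranges over $\{-L,\dots,1\}\subset\{-(L+1),\dots,L+1\}$. The boundary conventions also need checking: $C_0=0$ matches $Z_0$ because the entries of $Z_0$ are $b_{i,L+1+i-j}$ with $L+1+i-j>i$, and these vanish by the assumption $b_{n,k}=0$ for $k>n$. Finally, I would double-check the lemma's own statement by directly evaluating $(\iota J\iota^{-1}F)(p)$ at component $i$, which is $(Jf)(p(L+1)+i)$, and sorting the terms $f(p(L+1)+i+k)$ according to whether $i+k\geq L+1$, $0\leq i+k\leq L$, or $i+k<0$. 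This confirms the assignment to $F(p+1)$, $F(p)$ and $F(p-1)$ respectively.
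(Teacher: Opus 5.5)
Your proposal is correct and follows the paper's route exactly: apply Lemma~\ref{lem:fromTonFNtoTonFNCN} with $N=L+1$ to $J$, taking $\al^n_1=a_n$, $\al^n_{-k}=b_{n,k}$ for $0\leq k\leq L$, and all other $\al^n_k=0$. Your explicit index bookkeeping, including the check that $Z_0=0$ follows from $b_{n,k}=0$ for $n<k\leq L$, spells out what the paper leaves implicit.
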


As $(L+1)\times (L+1)$-matrices, the matrices $X_p$, $Z_p$ are singular. Note that 
the one non-trivial entry of $X_p$ is non-zero, and that the elements on the superdiagonal of $Z_p$ are all non-zero. Moreover, the elements on the superdiagonal of $Y_p$ are all non-zero as well as the matrix entry $(Y_p)_{N,0}$. 
Also, $Z_0=0$ by the convention $b_{0,k}=0$ for $1\leq k \leq L$. 
Explicitly, the matrices $X_p$, $Y_p$ and $Z_p$ are 
\begin{equation}\label{eq:defXpYpZp}
\begin{split}
X_p &\, =
\begin{pmatrix}
0     & 0      & \cdots & \cdots    & 0 \\
0     & 0      & \cdots & \cdots    & 0 \\
\vdots&        &\ddots  &           & \vdots \\
\vdots&        &        &\ddots     & \vdots \\
a_{p(L+1)+L}& 0& \cdots &            &  0
\end{pmatrix}, \quad
Z_p =
\begin{pmatrix}
0     & b_{p(L+1),L}      & b_{p(L+1),L-1} & \cdots    & b_{p(L+1),1} \\
0     & 0      & b_{p(L+1)+1,L}  & \cdots    & b_{p(L+1)+1,2} \\
\vdots&        &\ddots  &  \ddots          & \vdots \\
0&   0    &  \cdots    & 0    &  b_{p(L+1)+L-1,L}\\
0     &       0& \cdots &            &  0
\end{pmatrix}, 
\\
Y_p &\,=
\begin{pmatrix}
b_{p(L+1),0}     & a_{p(L+1)}          & 0      & \cdots    & 0 \\
b_{p(L+1)+1,1}     & b_{p(L+1)+1,0}      & a_{p(L+1)+1} & 0         & 0 \\
\vdots&        &\ddots  &  \ddots          & \vdots \\
b_{p(L+1)+L-1,L-1}& b_{p(L+1)+L-1,L-2}       &  \cdots       &b_{p(L+1)+L-1,0}     & a_{p(L+1)+L-1} \\
b_{p(L+1)+L,L} & b_{p(L+1)+L,L-1}     & \cdots   &  b_{p(L+1)+L,1}   &  b_{p(L+1)+L,0}
\end{pmatrix}.
\end{split}
\end{equation}

Similarly, we get an expression for the formal adjoint of $J^\ast$ by applying 
Lemma \ref{lem:fromTonFNtoTonFNCN} to \eqref{eq:defoperatorJstarassoctopols} or by taking the formal adjoint of the 
result in Corollary \ref{cor:lem:fromTonFNtoTonFNCN}. 

\begin{cor}\label{cor:lem:fromTonFNtoTonFNCN2}
$(J')^\ast = \iota \circ J^\ast \circ \iota^{-1} \colon \cF(\N;\C^{L+1}) \to \cF(\N;\C^{L+1})$ is given by 
$$\bigl( (J')^\ast F\bigr) (p) = Z_{p+1}^\ast F(p+1) + Y_p^\ast F(p) + X_{p-1}^\ast F(p-1)$$ 
with $X_p$, $Y_p$ and $Z_p$ as in 
Corollary \ref{cor:lem:fromTonFNtoTonFNCN}.
\end{cor}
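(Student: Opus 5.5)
The cleanest route is to compute the formal adjoint of $J'$ directly with respect to the pairing $\sum_{p\in\N}\langle F(p),G(p)\rangle_{\C^{L+1}}$. This pairing is well defined when one of $F$, $G$ is finitely supported. Since $\iota$ is the unitary-type identification of Lemma \ref{lem:fromTonFNtoTonFNCN}, it preserves this pairing:
\[
\sum_{n\in\N} f(n)\overline{g(n)}=\sum_{p\in\N}\langle \iota(f)(p),\iota(g)(p)\rangle_{\C^{L+1}}.
\]
So the operator $\iota\circ J^\ast\circ\iota^{-1}$ is characterised as the unique operator $T$ with
\[
\sum_p\langle J'F(p),G(p)\rangle=\sum_p\langle F(p),(TG)(p)\rangle
\]
for all finitely supported $F$ and all $G\in\cF(\N;\C^{L+1})$. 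The first step is therefore to check that the scalar $J^\ast$ in \eqref{eq:defoperatorJstarassoctopols} is the formal adjoint of $J$ in \eqref{eq:defoperatorJassoctopols}. This is a reindexing $m=n-k$ in the sum $\sum_n\sum_k b_{n,k}f(n-k)\overline{g(n)}$, together with $m=n+1$ in the $a_n$ term. The conventions $b_{n,k}=0$ for $n<k$ and $a_{-1}=0$ take care of the boundary terms.

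Second, insert Corollary \ref{cor:lem:fromTonFNtoTonFNCN}:
\[
\sum_p\langle X_pF(p+1)+Y_pF(p)+Z_pF(p-1),G(p)\rangle
=\sum_p\bigl(\langle F(p+1),X_p^\ast G(p)\rangle+\langle F(p),Y_p^\ast G(p)\rangle+\langle F(p-1),Z_p^\ast G(p)\rangle\bigr).
\]
Shifting the summation index in the first and third sums gives
\[
\sum_p\bigl\langle F(p),\,X_{p-1}^\ast G(p-1)+Y_p^\ast G(p)+Z_{p+1}^\ast G(p+1)\bigr\rangle .
\]
In the first shifted sum the term $p=0$ is absent, consistent with the convention $G(-1)=0$ (equivalently $X_{-1}=0$). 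In the third sum the $p=0$ term of the original sum drops out because $F(-1)=0$, and in any case $Z_0=0$. Comparing with the characterisation of the adjoint yields the stated formula.

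The only delicate point is the bookkeeping at the boundary $p=0$. There the convention $C_0=0$ of Lemma \ref{lem:fromTonFNtoTonFNCN} (here $Z_0=0$) must match the convention $f(-k)=0$ used in $J^\ast$.

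As a cross-check, one can instead apply Lemma \ref{lem:fromTonFNtoTonFNCN} directly to $J^\ast$, whose coefficients are $\al^n_{k}=\overline{b_{n+k,k}}$ for $0\le k\le L$ and $\al^n_{-1}=a_{n-1}$. Reading off the lower-triangular, full and upper-triangular blocks entrywise should reproduce $Z_{p+1}^\ast$, $Y_p^\ast$ and $X_{p-1}^\ast$. For example, the only nonzero entry of the lower block $X_{p-1}^\ast$ is $(X_{p-1}^\ast)_{0,L}=a_{(p-1)(L+1)+L}=a_{p(L+1)-1}$, which is exactly $\al^{p(L+1)}_{-1}$.
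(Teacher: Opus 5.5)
Your proposal is correct and takes the same approach as the paper, which obtains the formula either by taking the formal adjoint of Corollary \ref{cor:lem:fromTonFNtoTonFNCN} (your main argument, including the boundary conventions at $p=0$) or by applying Lemma \ref{lem:fromTonFNtoTonFNCN} directly to $J^\ast$ (your cross-check). You simply carry out in detail what the paper states in one line.
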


Note that $X_{-1}=0$, which follows from the convention $a_{-k}=0$ for $k\geq 1$. 

In Corollary \ref{cor:lem:fromTonFNtoTonFNCN} and Corollary \ref{cor:lem:fromTonFNtoTonFNCN2} we have indicated the action of $J$ and $J^\ast$ on 
$\cF(\N;\C^{L+1})$ by a prime, but from now on we identify these operators as it will be clear from the context on which spaces they act. 

Identifying $\cF(\N;\C^{L+1})\oplus \cF(\N;\C^{L+1}) \cong \cF(\N; \C^{L+1}\oplus \C^{L+1})$ 
we get $\J \colon \cF(\N; \C^{L+1}\oplus \C^{L+1}) \to \cF(\N; \C^{L+1}\oplus \C^{L+1})$ in the matrix representation 
\begin{equation}\label{eq:matrixreprbbJ}
\begin{split}
\bigl( \J F\bigr)(p) &\, = 
T_p F(p+1) + S_p F(p) + T_{p-1}^\ast F(p-1), \\
T_p &\, =\begin{pmatrix} 0 & X_p \\ Z_{p+1}^\ast & 0 \end{pmatrix}, 
\qquad S_p =\begin{pmatrix} 0 & Y_p \\ Y_{p}^\ast & 0 \end{pmatrix}. 
\end{split}
\end{equation}
for $F\in \cF(\N; \C^{L+1}\oplus \C^{L+1})$. 
We use the orthonormal basis for $\C^{L+1}\oplus \C^{L+1}$ given by 
$(v^1_i)_{i=0}^L$ for the first copy of $\C^{L+1}$ and $(v^2_i)_{i=0}^L$ for the second copy. Note that $T_p \in M_{2(L+1)}(\C)$ is a singular matrix, and we want to reduce the expression for $\J$ in \eqref{eq:matrixreprbbJ} to suitable subspaces. For this we need to analyse $T_p$, and this is done in Lemma \ref{lem:rangekernelT}. 

\begin{lemma}\label{lem:rangekernelT}
Let $V_1 = \C v^1_L \oplus \bigoplus_{i=1}^L \C v^2_i$, 
$V_2= \bigoplus_{i=0}^{L-1} v_i^1 \oplus \C v_0^2$ be $L+1$-dimensional subspaces of 
$\C^{L+1}\oplus \C^{L+1}$. Then $V_1^\perp= V_2$, $\C^{L+1}\oplus \C^{L+1} = V_1\oplus V_2$ and 
$\Ker T_p = \Ran T_p^\ast = V_1$, $\Ker T_p^\ast = \Ran T_p = V_2$ for all $p\in \N$.
Define the linear operator $\Om\colon \C^{L+1}\oplus \C^{L+1} \to \C^{L+1}\oplus \C^{L+1}$ by $\Om v_i^1 =v_i^1$, $0\leq i<L$, $\Om v_L^1 =v_0^2$, 
$\Om v_0^2 = v_L^1$, $\Om v_i^2 =v_i^2$, $1\leq i\leq L$, then $\Om^\ast = \Om = \Om^{-1}$ and 
\begin{equation*}
\Om T_p \Om = \begin{pmatrix} 0 & 0 \\ T^{21}_p & 0 \end{pmatrix}, 
\qquad \Om S_p \Om = \begin{pmatrix} S^{11}_p & S^{12}_p \\ S^{21}_p & S^{22}_p \end{pmatrix}
\end{equation*}
with the $(L+1)\times (L+1)$-matrices $T^{21}_p$, $S^{11}_p$, $S^{12}_p$, $S^{21}_p$ and $S^{22}_p$ satisfying $(S^{11}_p)^\ast = S^{11}_p$, $(S^{22}_p)^\ast = S^{22}_p$, 
$(S^{12}_p)^\ast = S^{21}_p$ and $T^{21}_p$, $S^{21}_p$, $S^{12}_p$ are invertible for all $p\in \N$. 
\end{lemma}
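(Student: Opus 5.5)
The plan is a direct computation in the basis $(v^1_i)\cup(v^2_i)$, using only the explicit shapes of $X_p$, $Y_p$, $Z_p$ in \eqref{eq:defXpYpZp}. Throughout, write a vector of $\C^{L+1}\oplus\C^{L+1}$ as $(u,w)$, so that
\[
T_p(u,w)=(X_pw,\,Z_{p+1}^\ast u),\qquad S_p(u,w)=(Y_pw,\,Y_p^\ast u).
\]

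\textbf{Step 1: kernels and ranges.} The facts $V_1^\perp=V_2$ and $\C^{L+1}\oplus\C^{L+1}=V_1\oplus V_2$ are immediate, since the two subspaces are spanned by complementary subsets of an orthonormal basis.

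Next, $X_p w=a_{p(L+1)+L}\,w_0\,v_L$, and $a_{p(L+1)+L}\neq 0$. The matrix $Z_{p+1}^\ast$ is strictly lower triangular, and its subdiagonal entries $\overline{b_{(p+1)(L+1)+i,L}}$ are nonzero because $b_{n,L}\neq0$ for $n\ge L$. Hence $Z_{p+1}^\ast v_L=0$, and $Z_{p+1}^\ast$ maps $\mathrm{span}\{v_0,\dots,v_{L-1}\}$ bijectively onto $\mathrm{span}\{v_1,\dots,v_L\}$. It follows that $T_p(u,w)=0$ if and only if $w_0=0$ and $u_0=\dots=u_{L-1}=0$, so $\Ker T_p=V_1$. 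It also follows that the range of $T_p$ is $\C v^1_L\oplus\bigoplus_{i\ge1}\C v^2_i$.

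Taking orthogonal complements via $\Ran T_p^\ast=(\Ker T_p)^\perp$ and $\Ker T_p^\ast=(\Ran T_p)^\perp$ then gives the remaining identifications. When carrying this out I will be careful with labels: the direct computation gives $\Ker T_p=\Ran T_p=V_1$ and $\Ker T_p^\ast=\Ran T_p^\ast=V_2$. So the statement should be read with the roles of $T_p$ and $T_p^\ast$ in the range identities interchanged, which is forced anyway by $\Ran T_p^\ast=(\Ker T_p)^\perp$. What the later reduction actually needs is the following: $T_p$ vanishes on $V_1$, takes values in $V_1$, and restricts to a bijection $V_2\to V_1$ by a dimension count.

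\textbf{Step 2: conjugation by $\Om$.} The map $\Om$ is the permutation of basis vectors swapping $v^1_L$ and $v^2_0$, so $\Om=\Om^\ast=\Om^{-1}$. It carries $V_2$ onto the first copy of $\C^{L+1}$ and $V_1$ onto the second copy.

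Consequently $\Om T_p\Om$ annihilates the second copy and takes values in the second copy. This gives exactly the block form $\begin{pmatrix}0&0\\ T^{21}_p&0\end{pmatrix}$. Here $T^{21}_p$ is the matrix of the bijection $T_p|_{V_2}\colon V_2\to V_1$ in permuted bases, hence it is invertible.

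Since $S_p$ is self-adjoint and $\Om$ is unitary and self-adjoint, $\Om S_p\Om$ is self-adjoint. This yields $(S^{11}_p)^\ast=S^{11}_p$, $(S^{22}_p)^\ast=S^{22}_p$ and $(S^{12}_p)^\ast=S^{21}_p$. In particular, invertibility of $S^{12}_p$ follows from that of $S^{21}_p$.

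\textbf{Step 3: invertibility of $S^{21}_p$ (the main point).} This block is the compression $P_{V_1}S_p|_{V_2}$. Take $(u,c\,v_0)\in V_2$, where $u\in\mathrm{span}\{v_0,\dots,v_{L-1}\}$. The $v^1_L$-component of the image is $c\,b_{p(L+1)+L,L}$, coming from the last row of $Y_p$; it involves only $c$, since the second slot $Y_p^\ast u$ contributes nothing to the first copy. The $v^2_j$-components for $1\le j\le L$ are $\sum_i\overline{(Y_p)_{i,j}}\,u_i$ and involve only $u$.

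The matrix $Y_p$ satisfies $(Y_p)_{i,j}=0$ for $j>i+1$, and its superdiagonal entries $(Y_p)_{i,i+1}=a_{p(L+1)+i}$ are nonzero for $0\le i<L$. So the map $u\mapsto\bigl(\langle Y_p^\ast u,v_j\rangle\bigr)_{j=1}^L$ is triangular with diagonal $\overline{a_{p(L+1)+j-1}}\neq0$. Hence $S^{21}_p$ is block diagonal up to a permutation, and
\[
\det S^{21}_p=\pm\, b_{p(L+1)+L,L}\prod_{i=0}^{L-1}\overline{a_{p(L+1)+i}}\neq0,
\]
using $b_{n,L}\neq0$ for $n\ge L$ (including $p=0$).

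I expect the only real obstacle to be bookkeeping: getting the triangularity direction and the indices right in this last determinant, and keeping straight which of $V_1,V_2$ is the kernel versus the range.
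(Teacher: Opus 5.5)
Your proof is correct and follows essentially the paper's route: a direct computation from the shapes of $X_p$, $Y_p$, $Z_p$ in which invertibility of $T^{21}_p$ and $S^{12}_p$ is read off from the nonzero corner, super- and subdiagonal entries. The only difference is that you get the block form of $\Om T_p\Om$ and the invertibility of $T^{21}_p$ from the kernel/range structure, where the paper writes the matrix out. Your relabelling of the range identities is also right: $\Ran T_p^\ast=(\Ker T_p)^\perp$ forces $\Ker T_p=\Ran T_p=V_1$ and $\Ker T_p^\ast=\Ran T_p^\ast=V_2$, and this is exactly what the block form $\begin{pmatrix}0&0\\ T^{21}_p&0\end{pmatrix}$ with invertible $T^{21}_p$ requires.
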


\begin{proof} Most statements follow directly from the construction. The first statements follow from \eqref{eq:matrixreprbbJ}, $S_p$ being self-adjoint 
and Corollary \ref{cor:lem:fromTonFNtoTonFNCN} or using 
\eqref{eq:defXpYpZp} and the fact that $a_n\not=0$ and $b_{n,L}\not=0$ for $n\geq L$. 
Observe that conjugation with $\Om$ affects only the two middle rows and columns of
the matrix in $M_{2(L+1)}(\C)$, which are both flipped. So we find  
\begin{equation*}
T^{21}_p =
\begin{pmatrix}
0     &  0     &   \cdots   & 0  &  a_{p(L+1)+L} \\
\overline{b_{(p+1)(L+1),L}}     & 0      & 0  & \cdots    &0 \\
\vdots&  \ddots       &\ddots  &           & \vdots \\
\overline{b_{(p+1)(L+1),2}} &   \overline{b_{(p+1)(L+1)+1,3}}    &  \cdots    & 0    &  0\\
\overline{b_{(p+1)(L+1),1}} &  \overline{b_{(p+1)(L+1)+1,2}}      & \cdots &   \overline{b_{(p+1)(L+1)+L-1,L}}         &  0
\end{pmatrix}
\end{equation*}
and since the subdiagonal has non-zero entries and $(T_p^{21})_{0,L}\not=0$ we see that 
$\det(T^{21}_p )\not=0$. And similarly,
\begin{equation*}
S^{12}_p =
\begin{pmatrix}
0    & a_{p(L+1)}          & 0      & \cdots    & 0 \\
0     & b_{p(L+1)+1,0}      & a_{p(L+1)+1} & 0         & 0 \\
\vdots&        &\ddots  &  \ddots          & \vdots \\
0 & b_{p(L+1)+L-1,L-2}       &  \cdots       &b_{p(L+1)+L-1,0}     & a_{p(L+1)+L-1} \\
\overline{b_{p(L+1)+L,L}} & 0     & \cdots   & 0   &  0 
\end{pmatrix}
\end{equation*}
and $\det (S^{12}_p) \not=0$ follows. Since $S^{21}_p = (S^{12}_p)^\ast$, both are invertible.  
\end{proof}

Note that $T^{21}_p= X_p^\ast+Z_{p+1}^\ast$, but there is no similar description of $S^{12}_p$ in general. 
For completeness, we state the form of the $(L+1)\times (L+1)$-matrices $S^{11}_p$ and $S^{22}_p$:
\begin{equation}\label{eq:defS11andS22}
\begin{split}
S^{11}_p &\, = 
\begin{pmatrix}
0     &   0    & \cdots & 0     & b_{p(L+1),0} \\
0     &  \ddots     &  &  0   & b_{p(L+1)+1,1} \\
\vdots&        &\ddots  &           & \vdots \\
0 & 0       &  \cdots       & 0     & b_{p(L+1)+L-1,L-1} \\
\overline{b_{p(L+1),0}}& \overline{b_{p(L+1)+1,1}} & \cdots & \overline{b_{p(L+1)+L-1,L-1}}         &  0
\end{pmatrix}, \\
S^{22}_p &\, = 
\begin{pmatrix}
0     &   b_{p(L+1)+L, L-1}    & \cdots & b_{p(L+1)+L,1}     & b_{p(L+1)+L,0} \\
\overline{b_{p(L+1)+L, L-1}}     &  0     &  \cdots &   0  &  0 \\
\vdots&        &\ddots  &           & \vdots \\
\overline{b_{p(L+1)+L,1}} & 0       &  \cdots       &   0  & 0 \\
\overline{b_{p(L+1)+L,0}}& 0 & \cdots &       0   &  0
\end{pmatrix}.
\end{split}
\end{equation}

Note that $S^{11}_p$ and $S^{22}_p$ are singular matrices, and that $S^{11}_p=0=S^{22}_p$ as matrices exactly when 
$b_{n,k}=0$ for all $0\leq k<L$. This is called the symmetric case of \eqref{eq:recursionscalarLorthopols}, 
and this is discussed in more detail in 
Section \ref{sec:symmetriccase}.

%%%%%%%%%%%%%%%%%%%%%%%%%%%%%%%%%
%%%%%%%%%%%%%%%%%%%%%%%%%%%%%%%%%
%%%%%%%%%%%%%%%%%%%%%%%%%%%%%%%%%
%%%%%%%%%%%%%%%%%%%%%%%%%%%%%%%%%

\section{Matrix orthogonal polynomials related to 
\texorpdfstring{$J$}{J}}\label{sec:MOPrelatedtoJ}

We have noted that $JJ^\ast$ and $J^\ast J$ occur in $\J^2$, and that these operators are self-adjoint $2L+3$-difference operators. 
According to Dur\'an and Van Assche \cite{DuraVA}, they are related to 
$(L+1)\times (L+1)$-matrix orthogonal polynomials. Before introducing these polynomials, we consider the polynomials 
associated with $\J$ using Lemma \ref{lem:rangekernelT}. 

The map $\Om \colon \C^{L+1}\oplus\C^{L+1} \to \C^{L+1}\oplus\C^{L+1}$ of Lemma \ref{lem:rangekernelT} 
gives a map $$\Om \colon \cF(\N;\C^{L+1}\oplus\C^{L+1}) \to 
\cF(\N;\C^{L+1}\oplus\C^{L+1})$$ by acting termwise: $(\Om F)(p) = \Om F(p)$.
Then Lemma \ref{lem:rangekernelT} gives 
\begin{equation}\label{eq:actionOmegabbJOmega}
\bigl( (\Om \J \Om) F\bigr)(p) = 
\begin{pmatrix} 0 & 0 \\ T^{21}_p & 0 \end{pmatrix} F(p+1)
+ \begin{pmatrix} S^{11}_p & S^{12}_p \\ S^{21}_p & S^{22}_p \end{pmatrix} F(p)
+ \begin{pmatrix} 0 & (T^{21}_{p-1})^\ast \\ 0 & 0 \end{pmatrix} F(p-1)
\end{equation}
for $F\in \cF(N;\C^{L+1}\oplus\C^{L+1})$. 

Now $\Om \J\Om$ gives rise to a bounded self-adjoint operator on 
$\ell^2(\N;\C^{L+1}\oplus\C^{L+1})$, and we determine the subspaces generating $\ell^2(\N;\C^{L+1}\oplus\C^{L+1})$.
Let $F_{k, \xi\oplus\eta}(n) = \de_{k,n} \xi\oplus \eta$ as 
in Section \ref{ssec:differenceeqtnmatrixOP} for $k\in \N$ and $\xi,\eta\in \C^{L+1}$.

\begin{lemma}\label{lem:cyclicvectorsOmegabbJOmega} 
Any element $F_{k,\xi\oplus \eta} = \sum_{p\in \N}  
\al_{p,i} (\Om \J\Om)^p F_{0,\xi_p\oplus 0}$ as a finite sum where $\al_{p}\in \C$ and $\xi_p\in \C^{L+1}$. Consequently, the 
closure of the linear span of elements of the form 
$(\Om \J\Om)^p F_{0, \xi\oplus 0}$ for $p\in \N$, $\xi \in \C^{L+1}$ is  $\ell^2(\N;\C^{L+1}\oplus\C^{L+1})$. 
\end{lemma}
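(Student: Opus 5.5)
We argue by induction on $k$, imitating the cyclicity argument for \eqref{eq:genMOPrecurrenceoperatoronFkxi} in Section \ref{ssec:differenceeqtnmatrixOP}. The only inputs are the explicit block form \eqref{eq:actionOmegabbJOmega} and the invertibility of $T^{21}_p$ and $S^{21}_p$ from Lemma \ref{lem:rangekernelT}. Write $\mathcal{S}_k$ for the linear span of the vectors $(\Om\J\Om)^p F_{0,\xi\oplus 0}$ with $p\le 2k+1$ and $\xi\in\C^{L+1}$. The claim to prove is that $F_{m,\xi\oplus 0}\in \mathcal{S}_{m}$ and $F_{m,0\oplus\eta}\in\mathcal{S}_m$ for all $m\le k$ and all $\xi,\eta$. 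The second statement of the lemma then follows, since $\{F_{k,v^1_i\oplus 0},F_{k,0\oplus v^2_i}\}$ is an orthonormal basis of $\ell^2(\N;\C^{L+1}\oplus\C^{L+1})$.

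First we compute the action of $\Om\J\Om$ on the basis vectors $F_{k,\xi\oplus\eta}$ from \eqref{eq:actionOmegabbJOmega}. The first block row couples $F(p)$ and $F(p-1)$, and the second block row couples $F(p+1)$ and $F(p)$. This gives
\begin{equation*}
\Om\J\Om F_{k,\xi\oplus 0} = F_{k,S^{11}_k\xi\oplus S^{21}_k\xi} + F_{k-1,0\oplus T^{21}_{k-1}\xi},
\end{equation*}
\begin{equation*}
\Om\J\Om F_{k,0\oplus\eta} = F_{k,S^{12}_k\eta\oplus S^{22}_k\eta} + F_{k+1,(T^{21}_{k})^\ast\eta\oplus 0}.
\end{equation*}
The exact location of the $T$-terms should be double-checked against \eqref{eq:actionOmegabbJOmega}. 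The structural point is that each application of $\Om\J\Om$ moves ``upper'' components into ``lower'' ones at the same level via $S^{21}_k$. It also moves ``lower'' components into ``upper'' ones at the next level via $(T^{21}_k)^\ast$. All remaining terms are at the same or lower levels.

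Induction. For the base case, $F_{0,\xi\oplus 0}\in\mathcal{S}_0$ trivially. Then
\begin{equation*}
\Om\J\Om F_{0,\xi\oplus0}=F_{0,S^{11}_0\xi\oplus 0}+F_{0,0\oplus S^{21}_0\xi}.
\end{equation*}
Since $S^{21}_0$ is invertible, we may choose $\xi=(S^{21}_0)^{-1}\eta$, which shows $F_{0,0\oplus\eta}\in\mathcal{S}_0$. For the inductive step, assume the claim holds up to level $k$. Apply $\Om\J\Om$ to $F_{k,0\oplus\eta}$, which lies in $\mathcal{S}_k$ by hypothesis. The result is $F_{k+1,(T^{21}_k)^\ast\eta\oplus0}$ plus terms at level $k$, and those lie in $\mathcal{S}_k$. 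Invertibility of $T^{21}_k$ then gives $F_{k+1,\xi\oplus0}\in\mathcal{S}_{k+1}$ for every $\xi$.

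Next apply $\Om\J\Om$ once more, to $F_{k+1,\xi\oplus0}$. This produces $F_{k+1,0\oplus S^{21}_{k+1}\xi}$ together with $F_{k+1,S^{11}_{k+1}\xi\oplus0}$, which was already obtained, and a level-$k$ term. Invertibility of $S^{21}_{k+1}$ yields all $F_{k+1,0\oplus\eta}$. Because the powers of $\Om\J\Om$ used stay bounded, each basis vector is a finite linear combination of $(\Om\J\Om)^pF_{0,\xi_p\oplus0}$, which is the first assertion.

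The main obstacle is bookkeeping: reading off correctly from \eqref{eq:actionOmegabbJOmega} which blocks raise the level. The argument needs the raising map (here $(T^{21}_k)^\ast$) and the same-level map from upper to lower components (here $S^{21}_k$) to be exactly the invertible matrices of Lemma \ref{lem:rangekernelT}. If the roles were transposed, the same scheme would still apply with $S^{12}_k$ and $T^{21}_k$ in their place, since all four are invertible.
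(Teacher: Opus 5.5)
Your proof is correct and is essentially the paper's. Your two formulas for $\Om\J\Om F_{k,\xi\oplus 0}$ and $\Om\J\Om F_{k,0\oplus\eta}$ are exactly the split of \eqref{eq:lem:cyclicvectorsOmegabbJOmega}, so the hedging about where the $T$-terms sit is unnecessary; the induction (invertibility of $S^{21}_0$ for the base case, then $(T^{21}_k)^\ast$ to reach $F_{k+1,\xi\oplus 0}$ and $S^{21}_{k+1}$ to reach $F_{k+1,0\oplus\eta}$) is the same as the paper's, with the minor bookkeeping point that the second application stays in $\mathcal{S}_{k+1}$ only because $F_{k+1,\xi\oplus 0}$ needs powers $p\le 2k+2$, which your construction provides but does not state, and which is irrelevant to the finiteness claim.
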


Lemma \ref{lem:cyclicvectorsOmegabbJOmega} implies that the 
multiplicity of an element of the spectrum of the self-adjoint operator $\Om\J\Om$ is at most $L+1$. 

\begin{proof} As in Section \ref{ssec:differenceeqtnmatrixOP} we note that 
\begin{equation}\label{eq:lem:cyclicvectorsOmegabbJOmega}
(\Om \J \Om) F_{k,\xi\oplus \eta} = 
F_{k-1,0\oplus T^{21}_{k-1}\xi} + 
F_{k, (S^{11}_k\xi +S^{12}_k\eta)\oplus( S^{21}_k \xi + S^{22}_k\eta)} 
+ F_{k+1, (T^{21}_k)^\ast \eta \oplus 0}
\end{equation}
and in particular
\begin{equation*}
(\Om \J \Om) F_{0,\xi\oplus 0} = 
F_{0, S^{11}_0\xi\oplus  S^{21}_0 \xi }.
\end{equation*}
Lemma \ref{lem:rangekernelT} gives that $S^{21}_0$ is invertible, so that 
\[
F_{0, 0\oplus\eta} = (\Om \J\Om)(F_{0, (S^{21}_0)^{-1}\eta\oplus 0}) 
- F_{0, S_0^{11}(S^{21}_0)^{-1}\eta\oplus 0}.
\]
So we have proved the $k=0$ case of the statement that 
any $F_{l, \xi\oplus \eta}$ for $l\leq k$ can be written as 
a finite sum of the form $\sum_{p\in \N}  
\al_{p,i} (\Om \J\Om)^p F_{0,\xi_p\oplus 0}$. Next we use induction on $k$, and applying 
\eqref{eq:lem:cyclicvectorsOmegabbJOmega} and the invertibility of $T^{21}_k$ by Lemma \ref{lem:fromTonFNtoTonFNCN}, we see that 
$F_{k+1, \xi\oplus 0}$ can be written as such a finite sum. As for the initial step we now observe that, again using Lemma \ref{lem:fromTonFNtoTonFNCN},
\begin{equation*}
F_{k+1,0\oplus \eta} = 
(\Om \J \Om)(F_{k+1, (S^{21}_{k+1})^{-1}\eta \oplus 0}) 
- F_{k+1, S^{11}_{k+1}(S^{21}_{k+1})^{-1}\eta \oplus 0} 
- F_{k, 0 \oplus T^{21}_k (S^{21}_{k+1})^{-1}\eta}
\end{equation*}
so that the induction step follows. 
\end{proof}

In particular, putting $F(p) = \begin{pmatrix} G(2p) \\ G(2p+1) \end{pmatrix}$ 
for $G\in \cF(\N; \C^{L+1})$ yields 
\begin{equation}\label{eq:recurrenceOmbbJOm}
\begin{split}
\bigl( (\Om \J \Om) F\bigr)(p) = 
\begin{pmatrix}
S^{11}_p G(2p) + S^{12}_p G(2p+1) + (T^{21}_{p-1})^\ast G(2p-1) \\
T^{21}_p G(2p+2) + S^{21}_p G(2p) +S^{22}_pG(2p+1)
\end{pmatrix}.
\end{split}
\end{equation}
For the corresponding eigenvalue equation we generate 
orthonormal $(L+1)\times (L+1)$-matrix polynomials $P_n$ by  
\begin{equation}\label{eq:MOPPn}
\begin{split}
z\-P_n(z) &\, = A_n \, P_{n+1}(z) + B_n \, P_n(z) + A_{n-1}^\ast P_{n-1}(z), \\
A_n &\, = \begin{cases} S^{12}_{n/2}, & n \text{ even}, \\ T^{21}_{(n-1)/2}, & n \text{ odd},\end{cases}\qquad 
B_n  = \begin{cases} S^{11}_{n/2}, & n \text{ even}, \\ S^{22}_{(n-1)/2}, & n \text{ odd}.\end{cases}
\end{split}
\end{equation}
In order to see that \eqref{eq:MOPPn} gives orthonormal matrix polynomials matching \eqref{eq:recurrenceOmbbJOm}, we need invertibility of $A_n$, self-adjointness of $B_n$ and 
$A_{n-1}^\ast = (T^{21}_{n/2-1})^\ast$ for $n$ even and 
$A_{n-1}^\ast = S^{21}_{(n-1)/2}$ for $n$ odd. This follows directly from 
Lemma \ref{lem:rangekernelT}. 
Note that the convention gives $A_{-1}=T^{21}_{-1}=0$, so that the polynomials
are determined by the initial condition $P_0(z) = \Id$. 
By Lemma \ref{lem:cyclicvectorsOmegabbJOmega} and Section 
\ref{ssec:differenceeqtnmatrixOP} it follows that 
all eigenfunctions of $\Om \J \Om$ are of this form.  
This gives the first family of matrix polynomials associated to $J$. 
Note that $G(p) = P_p(z)\xi$ for $\xi \in \C^{L+1}$ leads to eigenfunctions of 
$\Om \J \Om$. 
By Lemma \ref{lem:cyclicvectorsOmegabbJOmega} and Section 
\ref{ssec:differenceeqtnmatrixOP} it follows that 
all eigenfunctions of $\Om \J \Om$ are of this form.

The other families of matrix polynomials are associated to $JJ^\ast$ and $J^\ast J$. Combining 
Corollary \ref{cor:lem:fromTonFNtoTonFNCN} and 
Corollary \ref{cor:lem:fromTonFNtoTonFNCN2} we see that for $F\in \cF(\N;\C^{L+1})$ 
\begin{equation}\label{eq:JJastonF}
\begin{split}
&\, \bigl( JJ^\ast F\bigr) (p) = A'_p F(p+1) + B'_pF(p) + (A'_{p-1})^\ast F(p-1), \\
&\,A'_p  = X_p  Y_{p+1}^\ast + Y_p Z_{p+1}^\ast, \qquad
B'_p = X_pX_p^\ast + Y_p Y_p^\ast + Z_pZ_p^\ast 
\end{split}
\end{equation}
and 
\begin{equation}\label{eq:JastJonF}
\begin{split}
&\, \bigl( J^\ast J F\bigr) (p) = A''_p F(p+1) + B''_pF(p) + (A''_{p-1})^\ast F(p-1), \\
&\, A''_p  = Z_{p+1}^\ast Y_{p+1} + Y_p^\ast X_p, \qquad
B''_p =  X_{p-1}^\ast X_{p-1} + Y_p^\ast Y_p + Z_{p+1}^\ast Z_{p+1}
\end{split}
\end{equation}
Note that $X_p Z_k^\ast =0 = Z_k^\ast X_p$ for arbitrary $p,k\in\N$ is used in obtaining \eqref{eq:JJastonF} and \eqref{eq:JastJonF}. 
Naturally, \eqref{eq:JJastonF} and \eqref{eq:JastJonF} can also be obtained by
first calculating $JJ^\ast$ and $J^\ast J$ on $\cF(\N)$ and next using Lemma 
\ref{lem:fromTonFNtoTonFNCN} or Dur\'an and Van Assche \cite{DuraVA}.  
In order to associate matrix polynomials to the recurrence relations  
in \eqref{eq:JJastonF} and \eqref{eq:JastJonF} we need 
$A'_p$ and $A''_p$ invertible and $B'_p$ and $B''_p$ to be self-adjoint. 
The last requirement follows by inspection, and the first follows from 
Lemma \ref{lem:ApaccentApdoubleaccentinvertible}.

\begin{lemma}\label{lem:ApaccentApdoubleaccentinvertible}
$A'_p$ and $A''_p$ are lower triangular matrices and 
\begin{equation*}
\begin{split}
(A'_p)_{i,i} &\, = a_{p(L+1)+i} \overline{b_{(p+1)(L+1)+i,L}} \not= 0, \qquad 0\leq i \leq L,  \\
(A''_p)_{i,i} &\, = 
\begin{cases} a_{p(L+1)+L} \overline{b_{p(L+1)+L,L}} \not= 0, & i=0, \\
a_{(p+1)(L+1)+i-1} \overline{b_{(p+1)(L+1)+i-1,L}} \not= 0, & 0 < i \leq L.
\end{cases}
\end{split}
\end{equation*}
\end{lemma}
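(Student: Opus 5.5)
The plan is a direct entrywise computation from the explicit forms \eqref{eq:defXpYpZp}, using only the sparsity pattern of the three matrices. In the entry indices $0\le i,j\le L$:
\begin{itemize}
\item $X_p$ has its only nonzero entry at $(L,0)$.
\item $Z_p$ is strictly upper triangular, so $(Z_p)_{k,l}\neq 0$ forces $l>k$.
\item $Y_p$ is lower triangular plus the superdiagonal, so $(Y_p)_{k,l}\neq 0$ forces $l\le k+1$.
\end{itemize}
For each product occurring in $A'_p$ and $A''_p$ I will show it is lower triangular and read off its diagonal.

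For $A'_p = X_pY_{p+1}^\ast + Y_pZ_{p+1}^\ast$, the first term is supported in row $L$. Its $(L,L)$ entry is $a_{p(L+1)+L}\overline{(Y_{p+1})_{L,0}} = a_{p(L+1)+L}\overline{b_{(p+1)(L+1)+L,L}}$, and its off-diagonal entries $(L,j)$ with $j<L$ are below the diagonal. For the second term, $(Y_pZ_{p+1}^\ast)_{i,j}=\sum_k (Y_p)_{i,k}\overline{(Z_{p+1})_{j,k}}$. A nonzero summand requires $j<k\le i+1$, hence $j\le i$, so this term is lower triangular. When $j=i$ the only admissible index is $k=i+1$, which exists for $i<L$. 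This gives $(Y_p)_{i,i+1}\overline{(Z_{p+1})_{i,i+1}} = a_{p(L+1)+i}\overline{b_{(p+1)(L+1)+i,L}}$. For $i=L$ there is no admissible $k$, and the diagonal entry comes only from the first term. Together these give the stated formula for all $0\le i\le L$.

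For $A''_p = Z_{p+1}^\ast Y_{p+1} + Y_p^\ast X_p$ the argument is analogous. We have $(Y_p^\ast X_p)_{i,j} = \overline{(Y_p)_{L,i}}\,a_{p(L+1)+L}\,\de_{j,0}$, which is supported in column $0$ and hence lower triangular. Its $(0,0)$ entry is $a_{p(L+1)+L}\overline{b_{p(L+1)+L,L}}$. Next, $(Z_{p+1}^\ast Y_{p+1})_{i,j}=\sum_k \overline{(Z_{p+1})_{k,i}}(Y_{p+1})_{k,j}$, and a nonzero summand requires $k<i$ and $j\le k+1$, hence $j\le i$. On the diagonal only $k=i-1$ survives, which needs $i\ge1$, giving $\overline{b_{(p+1)(L+1)+i-1,L}}\,a_{(p+1)(L+1)+i-1}$. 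For $i=0$ this term contributes nothing, which matches the case split in the statement.

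Finally, nonvanishing follows from the standing assumptions $a_n>0$ and $b_{n,L}\neq0$ for $n\ge L$. All first indices of $b_{\cdot,L}$ above are at least $L$: $p(L+1)+L\ge L$, $(p+1)(L+1)+i\ge L$ and $(p+1)(L+1)+i-1\ge L$. There is no real obstacle. The only care needed is the index bookkeeping, namely checking that each boundary diagonal entry ($i=L$ for $A'_p$, $i=0$ for $A''_p$) comes from exactly one of the two products.
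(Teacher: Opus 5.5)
Your proposal is correct and follows the same route as the paper: a direct entrywise computation from the sparsity patterns in \eqref{eq:defXpYpZp}, showing that each product is lower triangular and identifying which of the two terms supplies each diagonal entry ($Y_pZ_{p+1}^\ast$ for $i<L$ and $X_pY_{p+1}^\ast$ for $i=L$). You also carry out the $A''_p$ case explicitly, which the paper only calls analogous, and your index checks for the nonvanishing of the diagonal entries are accurate.
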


\begin{proof} Note that 
$Y_p Z_{p+1}^\ast$ is lower triangular using \eqref{eq:defXpYpZp}, with  
the entries
$$
(Y_p Z_{p+1}^\ast)_{i,i} = a_{p(L+1)+i} \overline{b_{(p+1)(L+1)+i,L}} \not= 0, \quad 0\leq i <L, \qquad (Y_p Z_{p+1}^\ast)_{L,L} = 0.
$$
Note that $(X_p Y_{p+1}^\ast)_{L,L} = a_{p(L+1)+L} \overline{b_{(p+1)(L+1)+L, L}} \not= 0$ and that, using \eqref{eq:defXpYpZp},  $X_p Y_{p+1}^\ast$ has only a non-zero last row. This proves the first statement. 

The statement for $A''_p$ follows analogously. 
\end{proof}

Now $JJ^\ast$ and $J^\ast J$ are bounded self-adjoint operators as discussed in Section \ref{ssec:differenceeqtnmatrixOP}. In particular, the multiplicity of the eigenfunctions in 
$\cF(\N;\C^{L+1})$ is $L+1$. 

So we can define two sets of orthonormal matrix polynomials by the recurrences in 
\eqref{eq:JJastonF} and \eqref{eq:JastJonF}.  We define the 
$(L+1)\times (L+1)$-matrix polynomials $\Phi_n$ and $\Psi_n$ by 
\begin{equation}\label{eq:defPhiPsiMOP}
\begin{split}
z \, \Phi_n(z) &\, = A'_n\, \Phi_{n+1}(z) + B'_n\, \Phi_n(z) + (A'_{n-1})^\ast \Phi_{n-1}(z), \\
z \, \Psi_n(z) &\, = A''_n\, \Psi_{n+1}(z) + B''_n\, \Psi_n(z) + (A''_{n-1})^\ast \Psi_{n-1}(z)
\end{split}
\end{equation}
By convention on the coefficients it follows that $A'_{-1}=A''_{-1}=0$, so that the polynomials $\Phi_n$ and $\Psi_n$ are determined by the initial condition 
$\Phi_0(z) = \Id = \Psi_0(z)$. 

Next we connect the $(L+1)\times (L+1)$-matrix orthogonal polynomials 
$P_n$, $\Phi_n$ and $\Psi_n$. From 
\eqref{eq:recurrenceOmbbJOm}  and \eqref{eq:MOPPn} it follows that for 
vectors $\xi\in \C^{L+1}$ the element
\begin{equation}\label{eq:matchPPhiPsi1}
F_w^{\xi} \colon \N \to \C^{L+1}\oplus \C^{L+1}, 
\qquad F_w^{\xi}(p) = \begin{pmatrix} P_{2p}(w)\, \xi \\
                           P_{2p+1}(w)\, \xi 
                           \end{pmatrix}
\end{equation}
satisfies $\Om\J \Om \, F_w^{\xi} = w\, F_w^{\xi}$ and 
thus $\Om\J^2 \Om \, F_w^{\xi} = w^2\, F_w^{\xi}$. On the other hand, 
by \eqref{eq:JJastonF}, \eqref{eq:JastJonF} and \eqref{eq:defPhiPsiMOP} we see that 
for $\si,\tau \in \C^{L+1}$ we have
\begin{equation}\label{eq:matchPPhiPsi2}
\Bigl( \J^2 \begin{pmatrix} \Phi_\bullet(z)\, \si\\ \Psi_\bullet(z)\, \tau \end{pmatrix}\Bigr) (n)  =
\Bigl( \begin{pmatrix} JJ^\ast & 0  \\ 0 & J^\ast J \end{pmatrix}
\begin{pmatrix} \Phi_\bullet (z)\, \si\\ \Psi_\bullet (z)\, \tau \end{pmatrix} \Bigr) (n)  
= z\, \begin{pmatrix} \Phi_n(z)\, \si\\ \Psi_n(z)\, \tau \end{pmatrix}
\end{equation}
using the notation $\Phi_\bullet(z)\si \in \cF(\N;\C^{L+1})$ 
defined by $n\mapsto \Phi_n(z)\si$. 
Note that the eigenvalue equation $\J^2 F = zF$ has $2(L+1)$-degrees of freedom, so that the solutions of \eqref{eq:matchPPhiPsi1} and \eqref{eq:matchPPhiPsi2} 
are related.

\begin{prop}\label{prop:relationPnPhinPsin}  We have for all $p\in \N$ 
\begin{equation*}
\begin{pmatrix} P_{2p}(w)\, \xi \\
                           P_{2p+1}(w)\, \xi 
                           \end{pmatrix}
= \Om 
\begin{pmatrix} \Phi_p(w^2) & 0 \\ 0 & \Psi_p(w^2) \end{pmatrix} \Om 
\begin{pmatrix}  \xi \\ (S^{12}_0)^{-1} (w\Id -S_0^{11})\, \xi \end{pmatrix}
\end{equation*}
\end{prop}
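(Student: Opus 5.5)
Both sides are solutions of the same eigenvalue problem, so I will compare them through uniqueness of solutions of the recursions. Put $\eta = (S^{12}_0)^{-1}(w\Id - S^{11}_0)\xi$ and define $H\colon \N\to\C^{L+1}\oplus\C^{L+1}$ by
\[
H(p) = \begin{pmatrix} \Phi_p(w^2)\xi \\ \Psi_p(w^2)\eta\end{pmatrix}.
\]
I will show that $\Om H = F_w^\xi$, with $F_w^\xi$ as in \eqref{eq:matchPPhiPsi1}. Since $\Om^2=\Id$, this is exactly the claimed identity.

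\textbf{Both sides solve $\J^2$-eigenvalue problems.} By \eqref{eq:matchPPhiPsi2}, $\J^2 H = w^2 H$ on $\cF(\N;\C^{L+1}\oplus\C^{L+1})$. Since $\Om\J\Om F_w^\xi = wF_w^\xi$, also $\J^2(\Om F_w^\xi) = w^2\,\Om F_w^\xi$.

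\textbf{Uniqueness from initial data.} The operator $\J^2 = \mathrm{diag}(JJ^\ast, J^\ast J)$ acts as a three-term matrix recursion on each block, with leading coefficients $A'_p$, $A''_p$, which are invertible by Lemma \ref{lem:ApaccentApdoubleaccentinvertible}. For $p=0$ there is no lower term, since $A'_{-1}=A''_{-1}=0$. Hence a solution of $\J^2 G = w^2 G$ is determined, block by block, by $G(0)$: solve $G(p+1) = (A'_p)^{-1}\bigl(w^2G(p) - B'_pG(p) - (A'_{p-1})^\ast G(p-1)\bigr)$, and similarly for the second block. In particular the block components of $\Om F_w^\xi$ are $n\mapsto \Phi_n(w^2)\si$ and $n\mapsto\Psi_n(w^2)\tau$, where $\si\oplus\tau = (\Om F_w^\xi)(0)$. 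It therefore suffices to check that $(\Om F_w^\xi)(0) = H(0) = \xi\oplus\eta$, that is, $F_w^\xi(0) = \Om(\xi\oplus\eta)$.

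\textbf{Initial data.} Here I must be careful about which coordinates $\Om$ mixes. The matrix polynomials $P_n$ were set up in the $\Om$-conjugated frame: $G(2p)$ and $G(2p+1)$ are the two blocks of a solution $F$ of $\Om\J\Om F = wF$. Thus $F_w^\xi(0) = P_0(w)\xi\oplus P_1(w)\xi$. We have $P_0=\Id$, and from \eqref{eq:MOPPn} with $n=0$, $wP_0 = A_0P_1 + B_0P_0$ with $A_0 = S^{12}_0$ and $B_0 = S^{11}_0$. Hence $P_1(w)\xi = (S^{12}_0)^{-1}(w - S^{11}_0)\xi = \eta$, so $F_w^\xi(0) = \xi\oplus\eta$ in the $\Om$-frame. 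Consequently the genuine $\J$-eigenvector is $\Om F_w^\xi$, with initial value $\Om(\xi\oplus\eta)$, and by the uniqueness step its $\J^2$-components are $\Phi_\bullet(w^2)$ and $\Psi_\bullet(w^2)$ applied to the two blocks of $\Om(\xi\oplus\eta)$. This gives
\[
\Om F_w^\xi(p) = \mathrm{diag}\bigl(\Phi_p(w^2),\Psi_p(w^2)\bigr)\,\Om(\xi\oplus\eta),
\]
and multiplying by $\Om$ yields exactly the stated formula, with the inner $\Om$ applied to $\xi\oplus\eta$.

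\textbf{Main obstacle.} The delicate point is the bookkeeping: $\Om$ must be placed correctly on both sides, and one must make sure that $\J^2$, rather than $\Om\J^2\Om$, is the operator that decouples into $JJ^\ast\oplus J^\ast J$ with the $\Phi$/$\Psi$ recursions. The identity $\Om\J^2\Om\,F_w^\xi = w^2F_w^\xi$ implies $\J^2(\Om F_w^\xi) = w^2\,\Om F_w^\xi$ because $\Om^2=\Id$, and this is the relation used above.
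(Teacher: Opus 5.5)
Your final paragraph is correct and is essentially the paper's proof. The function $\Om F_w^\xi$ solves $\J^2G=w^2G$, and a solution of this decoupled system is determined block by block by $G(0)$. Your recursion argument, using $A'_{-1}=A''_{-1}=0$ and the invertibility of $A'_p$, $A''_p$, is exactly what the paper's remark about $2(L+1)$ degrees of freedom amounts to. Finally, $(\Om F_w^\xi)(0)=\Om(\xi\oplus P_1(w)\xi)$.

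However, the setup you announce at the start contradicts this and should be fixed. With $H(p)=\Phi_p(w^2)\xi\oplus\Psi_p(w^2)\eta$, the identity $\Om H=F_w^\xi$ is not the claimed statement, because the inner $\Om$ is missing. It is also false in general. Your criterion ``it suffices to check $(\Om F_w^\xi)(0)=\xi\oplus\eta$'' fails: in fact $(\Om F_w^\xi)(0)=\Om(\xi\oplus\eta)$, and $\Om$ interchanges $\xi_L$ with $\eta_0=\bigl(w\xi_L-\sum_{j<L}\overline{b_{j,j}}\,\xi_j\bigr)/\overline{b_{L,L}}$. This value of $\eta_0$ comes from the last row of $S^{12}_0\eta=(w\Id-S^{11}_0)\xi$, and generically $\eta_0\neq\xi_L$.

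Define instead $H(p)=\mathrm{diag}\bigl(\Phi_p(w^2),\Psi_p(w^2)\bigr)\,\Om(\xi\oplus\eta)$. Then $\Om H=F_w^\xi$ is exactly the proposition, and it is what your last paragraph actually establishes. The appeal to an ``$\Om$-frame'' is also unnecessary, since $F_w^\xi(0)=\xi\oplus\eta$ holds literally.
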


\begin{proof} By \eqref{eq:matchPPhiPsi1}, \eqref{eq:matchPPhiPsi2} and the 
last observation before the statement of the proposition, we find that 
for $\xi\in \C^{L+1}$ we have $\si, \tau\in \C^{L+1}$ so that 
\begin{equation*}
p\mapsto 
\Om \begin{pmatrix} \Phi_p(w^2)\, \si\\ \Psi_p(w^2)\, \tau \end{pmatrix} =
\begin{pmatrix} P_{2p}(w)\, \xi \\ P_{2p+1}(w)\, \xi \end{pmatrix}
\end{equation*}
in $\cF(\N; \C^{L+1}\oplus \C^{L+1})$. Taking $p=0$ and using 
$P_1(w)= (S^{12}_0)^{-1} (w\Id -S_0^{11})$  and $P_0(w)=\Phi_0(w^2)= \Psi_0(w^2)=\Id$ and $\Om$ as in Lemma \ref{lem:rangekernelT} shows that 
\begin{equation*}
\begin{pmatrix} \si\\ \tau \end{pmatrix} = \Om
\begin{pmatrix}  \xi \\ (S^{12}_0)^{-1} (w\Id -S_0^{11})\, \xi \end{pmatrix}
\end{equation*}
Plugging this in, gives the result. 
\end{proof}

Observe that the positive operators $JJ^\ast$ and $J^\ast J$ are each others Darboux transform and so they have the same spectrum (up to a possible exception for $0$), see Deift \cite[Thm.~2]{Deif}. We relate the polynomials $\Phi_n$ and $\Psi_n$ in Section \ref{sec:relationbetweenMOP} and Section \ref{sec:polardecompositions}. In Section \ref{sec:klr-constant-symmetric} we present an example where the nature of the spectrum indeed differs at $0$.

\begin{remark}\label{rmk:caseLis0:sec:MOPrelatedtoJ}
In case $L=0$, the polynomial families $(P_n)_{n\in \N}$,
$(\Phi_n)_{n\in \N}$, $(\Psi_n)_{n\in \N}$ are scalar 
orthogonal polynomials, and $JJ^\ast$ and $J^\ast J$ are 
Jacobi matrices. The polynomials $P_n$ are  
symmetric polynomials, i.e. $P_n(-x)=(-1)^nP_n(x)$, and Proposition \ref{prop:relationPnPhinPsin} describes the standard result that the even, respectively odd, degree polynomials can be expressed in terms of a related family of orthogonal polynomials, see e.g. \cite[\S I.8]{Chih}. 
Then $JJ^\ast$ and $J^\ast J$ are $UL$ and $LU$-factorisations of the Jacobi matrices, and the orthogonality measures are related via Christoffel and Geronimus transformations (at zero), see \cite[Ch.~6, 7]{GarcMM}, \cite[\S 2.7]{Isma}.
\end{remark}

\begin{remark}\label{rmk:caseLis1:sec:MOPrelatedtoJ}
In case $L=1$, putting $b_{n,1}=a_{n-1}$ brings the case 
studied by Pushnitski and \v{S}tampach \cite{PushS}, see also \cite{EichLY}. Then the three-term recurrence relation 
\eqref{eq:MOPPn} for the polynomials $P_n$ can be written 
uniformly, and the corresponding three-term recurrence relation matches the block Jacobi matrix in \cite[\S 2.8]{PushS}, \cite[\S 3]{EichLY}. The condition $b_{n,1}=a_{n-1}$ and the expressions for $A_p'$, $A''_p$, $B'_p$ and $B_p''$ 
in 
\eqref{eq:JJastonF} and \eqref{eq:JastJonF} show that 
$J$ is normal if and only if $\overline{b_{2p+2,0}} + b_{2p+1,0}$ is real for all $p$, or that the imaginary part of $b_{p,0}$ is independent of $p$. This is also observed in 
\cite[\S 2.2]{PushS}. Note that without the condition  
$b_{n,1}=a_{n-1}$ there is no easy condition ensuring that $J$ is a normal operator. 
\end{remark}

%%%%%%%%%%%%%%%%%%%%%%%%%%%%%%%%%
%%%%%%%%%%%%%%%%%%%%%%%%%%%%%%%%%
%%%%%%%%%%%%%%%%%%%%%%%%%%%%%%%%%
%%%%%%%%%%%%%%%%%%%%%%%%%%%%%%%%%

\section{Relation between matrix orthogonal polynomials}\label{sec:relationbetweenMOP}

In Proposition \ref{prop:relationPnPhinPsin} we have established a first relation between the families of matrix orthogonal polynomials. In this section we extend the relation between 
eigenfunctions of \eqref{eq:JJastonF}, \eqref{eq:JastJonF} 
and we relate the asymptotically free solutions for the 
polynomial recurrences. 

Observe that the positive operators $JJ^\ast$ and $J^\ast J$ are each others Darboux transform, so that $J$ maps eigenfunctions 
of $J^\ast J$ to those of $JJ^\ast$, see \cite{Deif}. 
Recall that $A'_p$, $B'_p$, $A''_p$, $B''_p$ are 
defined in \eqref{eq:JJastonF} and \eqref{eq:JastJonF}. 

\begin{lemma}\label{lem:JonFeigenfunctionJastJandback}
(i) Assume $F\in \cF(\N;\C^{L+1})$ satisfies 
\begin{equation*}
z \, F (p) = A'_p F(p+1) + B'_pF(p) + (A'_{p-1})^\ast F(p-1), \qquad 
p\geq k \geq 0, 
\end{equation*}
for some $k\in \N$ and all $z\in \C$, then $J^\ast F \in \cF(\N;\C^{L+1})$ satisfies  
\begin{equation*}
z\,  G (p) = A''_p G(p+1) + B''_pG(p) + (A''_{p-1})^\ast G(p-1), 
\end{equation*}
for all $p\geq k+1$.  If $k=0$, then it holds for all $p\geq 0$.
\par \noindent 
(ii) Assume $G\in \cF(\N;\C^{L+1})$ satisfies 
\begin{equation*}
z\,  G (p) = A''_p G(p+1) + B''_pG(p) + (A''_{p-1})^\ast G(p-1), \qquad p\geq k\geq 0
\end{equation*}
for some $k\in \N$ and all $z\in \C$, then 
$J G \in \cF(\N;\C^{L+1})$ satisfies  
\begin{equation*}
z \, F (p) = A'_p F(p+1) + B'_pF(p) + (A'_{p-1})^\ast F(p-1),
\end{equation*}
for all $p\geq k+1$ if $k\geq 0$ and for all $p\in \N$ if $k=0$. 
\end{lemma}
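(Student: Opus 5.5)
The plan is to use the operator identities $J^\ast(JJ^\ast) = (J^\ast J)J^\ast$ and $J(J^\ast J) = (JJ^\ast)J$ on $\cF(\N;\C^{L+1})$, together with the locality of the difference operators. Write $D' = JJ^\ast - z$ and $D'' = J^\ast J - z$ as operators on $\cF(\N;\C^{L+1})$. By \eqref{eq:JJastonF} and \eqref{eq:JastJonF}, the hypothesis of (i) says exactly that $(D'F)(p)=0$ for $p\geq k$. The conclusion says that $(D''J^\ast F)(p)=0$ for $p\geq k+1$. These identities hold formally on all of $\cF(\N;\C^{L+1})$, not only on $\ell^2$, because all operators are given by finite sums. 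Indeed, $J$, $J^\ast$ are defined on $\cF(\N)$ by \eqref{eq:defoperatorJassoctopols}, \eqref{eq:defoperatorJstarassoctopols}, and the block forms of Corollaries \ref{cor:lem:fromTonFNtoTonFNCN} and \ref{cor:lem:fromTonFNtoTonFNCN2} are obtained by conjugating with $\iota$. Associativity of composition then gives $D''J^\ast = J^\ast D'$ identically.

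For (i), I compute
$$(D''J^\ast F)(p) = (J^\ast D' F)(p) = Z_{p+1}^\ast (D'F)(p+1) + Y_p^\ast (D'F)(p) + X_{p-1}^\ast (D'F)(p-1)$$
using Corollary \ref{cor:lem:fromTonFNtoTonFNCN2}. For $p\geq k+1$ the three arguments $p+1,p,p-1$ are all $\geq k$, so every term vanishes. If $k=0$, the case $p=0$ also works: the term $X_{-1}^\ast(D'F)(-1)$ vanishes by the convention $X_{-1}=0$ (equivalently, the convention $F(-1)=0$ inherent in the definitions), and the other terms involve only $p=0,1$. This gives the conclusion for all $p\geq 0$.

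Part (ii) is symmetric: $D'J = JD''$ and
$$(JD''G)(p) = X_p (D''G)(p+1) + Y_p (D''G)(p) + Z_p (D''G)(p-1)$$
by Corollary \ref{cor:lem:fromTonFNtoTonFNCN}. The same index argument applies, with $Z_0=0$ handling $p=0$ when $k=0$.

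The only point requiring care is that the matrix recurrences of \eqref{eq:JJastonF} and \eqref{eq:JastJonF} agree with the compositions $JJ^\ast$, $J^\ast J$ at \emph{every} $p$, including $p=0$, under the boundary conventions. This is needed so that "$D'F$ vanishes at $p$" is literally the hypothesis. I would verify this by checking that the products $X_pY_{p+1}^\ast$, etc., reproduce the composition at $p=0$ using $X_{-1}=0$, $Z_0=0$, and $X_pZ_k^\ast = 0 = Z_k^\ast X_p$. I expect this boundary bookkeeping to be the main (though mild) obstacle.
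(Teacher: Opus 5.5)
Your proposal is correct and is essentially the paper's argument. The paper also uses the intertwining $J^\ast(JJ^\ast)=(J^\ast J)J^\ast$, phrased as $JJ^\ast$ and $J^\ast J$ being Darboux transforms, to rewrite the $A''$, $B''$ recurrence applied to $J^\ast F$ as $J^\ast$ applied to the $A'$, $B'$ recurrence for $F$; it then uses that $J^\ast$ only involves the indices $p-1,p,p+1$, with $X_{-1}=A'_{-1}=0$ handling $p=0$ when $k=0$. Your boundary check, that the block recurrences agree with the compositions at every $p$ via $X_pZ_k^\ast=0=Z_k^\ast X_p$ and the conventions, is the same direct verification the paper mentions as an alternative.
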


\begin{proof} We consider (i), and for this we consider in general 
\begin{equation}\label{eq:lem:JonFeigenfunctionJastJandback1}
\begin{split}
&\, A''_p \bigl(J^\ast F\bigr)(p+1) + B''_p \bigl(J^\ast F\bigr)(p) 
+ (A''_{p-1})^\ast  \bigl(J^\ast F\bigr)(p+1) 
\\ &\, = J^\ast \bigl( p\mapsto A'_p F(p+1) + B'_p F(p) + (A'_{p-1})^\ast F(p-1)\bigr)
\end{split}
\end{equation}
which follows from  $J^\ast J$ and $JJ^\ast$ being Darboux transforms. 
It is also straightforward to check \eqref{eq:lem:JonFeigenfunctionJastJandback1} 
directly using Corollary \ref{cor:lem:fromTonFNtoTonFNCN2},  
\eqref{eq:JastJonF} and \eqref{eq:JJastonF} and using $X_pZ^\ast_k=0=Z^\ast_k X_p$ for all $k$ and $p$.  Now  the assumption on $F$ gives that 
\begin{equation*}
J^\ast \bigl( p\mapsto A'_p F(p+1) + B'_p F(p) + (A'_{p-1})^\ast F(p-1)\bigr) 
= z \, (J^\ast F)(p), \qquad p\geq k+1
\end{equation*}
since we need to apply the assumption for $p$, $p+1$ and $p-1$. 
In case $k=0$, we have $X_{-1}=A'_{-1}=0$ and the result holds for all 
$p\geq 0$.

The proof of (ii) is analogous.
\end{proof}

The case $k=0$ of Lemma \ref{lem:JonFeigenfunctionJastJandback} gives a relation 
between the polynomials $\Phi_n$ and $\Psi_n$.

\begin{cor}\label{cor:lem:JonFeigenfunctionJastJandback} We have 
\begin{equation*}
\begin{split}
\bigl(J^\ast \Phi_\bullet(z)\bigr)(n) &\, = \Psi_n(z)\bigl( Z_1^\ast \Phi_1(z)+ Y_0^\ast\bigr), \\
\bigl(J \Psi_\bullet(z)\bigr)(n) &\, = \Phi_n(z) \bigl( X_0 \Psi_1(z)+ Y_0\bigr),
\end{split}
\end{equation*}
where the action of $J$ and $J^\ast$ is extended to $(L+1)\times (L+1)$-matrices. 
Moreover, 
\begin{equation*}
\begin{split}
z\, \Phi_n(z) &\, = \bigl( X_n \Psi_{n+1}(z) + Y_n \Psi_n(z) + Z_n \Psi_n(z)\bigr)
\bigl( Z_1^\ast \Phi_1(z)+ Y_0^\ast\bigr), \\
z\, \Psi_n(z) &\, = \bigl( Z_{n+1}^\ast \Phi_{n+1}(z) + Y_n^\ast \Phi_n(z) + X_{n-1}^\ast \Phi_n(z)\bigr)\bigl( X_0 \Psi_1(z)+ Y_0\bigr), 
\end{split}
\end{equation*}
so that 
\begin{align*}
\bigl( Z_1^\ast \Phi_1(z)+ Y_0^\ast\bigr) \bigl( X_0 \Psi_{1}(z) + Y_0 \bigr)=  z\, \Id &\, = \bigl( X_0 \Psi_{1}(z) + Y_0 \bigr)
\bigl( Z_1^\ast \Phi_1(z)+ Y_0^\ast\bigr).
\end{align*}
\end{cor}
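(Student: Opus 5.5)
The plan is to apply Lemma \ref{lem:JonFeigenfunctionJastJandback} with $k=0$ columnwise, and then use uniqueness of solutions of the matrix recurrences in \eqref{eq:defPhiPsiMOP} with prescribed initial value.

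First, fix $\si\in\C^{L+1}$ and take $F(p)=\Phi_p(z)\si$. By \eqref{eq:defPhiPsiMOP} this $F$ satisfies the $JJ^\ast$-recurrence for all $p\ge 0$, because $A'_{-1}=0$. Lemma \ref{lem:JonFeigenfunctionJastJandback}(i) with $k=0$ then shows that $G=J^\ast F$ satisfies the $J^\ast J$-recurrence for all $p\geq 0$. Since every $A''_p$ is invertible by Lemma \ref{lem:ApaccentApdoubleaccentinvertible}, a solution of this recurrence valid for all $p\ge0$ (with the convention $G(-1)$ irrelevant, as $A''_{-1}=0$) is determined by $G(0)$, and it equals $\Psi_p(z)G(0)$. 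Evaluating Corollary \ref{cor:lem:fromTonFNtoTonFNCN2} at $p=0$, and using $X_{-1}=0$ and $\Phi_0=\Id$, gives $G(0)=Z_1^\ast\Phi_1(z)\si+Y_0^\ast\si$. Since $\si$ is arbitrary, this yields the first identity at the matrix level. The second identity follows in the same way from part (ii), with $(JG)(0)=X_0\Psi_1(z)+Y_0$ because $Z_0=0$.

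Next, apply $J$ to the first identity. This gives $(JJ^\ast\Phi_\bullet(z))(n)=(J\Psi_\bullet(z))(n)\,(Z_1^\ast\Phi_1+Y_0^\ast)$, where the constant right factor passes through $J$ since $J$ acts by left multiplication by coefficient matrices. The left-hand side equals $z\Phi_n(z)$ by \eqref{eq:JJastonF} and \eqref{eq:defPhiPsiMOP}. Writing out $J\Psi_\bullet$ via Corollary \ref{cor:lem:fromTonFNtoTonFNCN} gives the displayed formula for $z\Phi_n(z)$. Note that the statement writes $Z_n\Psi_n$ where one expects $Z_n\Psi_{n-1}$, and I would read it that way. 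The formula for $z\Psi_n(z)$ is obtained symmetrically, by applying $J^\ast$ to the second identity.

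Finally, substitute the second identity into the formula just obtained: $z\Phi_n(z)=\Phi_n(z)(X_0\Psi_1+Y_0)(Z_1^\ast\Phi_1+Y_0^\ast)$. Taking $n=0$ and $\Phi_0=\Id$ gives $(X_0\Psi_1+Y_0)(Z_1^\ast\Phi_1+Y_0^\ast)=z\Id$. The other order comes from the $\Psi$-formula at $n=0$. Alternatively, in finite dimensions a one-sided inverse of $z^{-1}$ times one factor is two-sided for $z\neq0$, and polynomial identity in $z$ covers $z=0$.

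The main obstacle is the uniqueness step. One has to check carefully that the equation at $p=0$ is included, which needs $k=0$ in Lemma \ref{lem:JonFeigenfunctionJastJandback} and the conventions $X_{-1}=0$, $Z_0=0$, $A''_{-1}=0$, so that $J^\ast\Phi_\bullet$ is genuinely a polynomial solution determined by its value at $0$.
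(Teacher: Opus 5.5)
Your proof is correct and follows the paper's argument. You apply Lemma~\ref{lem:JonFeigenfunctionJastJandback} with $k=0$ columnwise, identify $J^\ast\Phi_\bullet(z)\si$ as $\Psi_\bullet(z)\eta$ via its value at $p=0$, then apply $J$ (resp.\ $J^\ast$) and specialise to $n=0$. Your reading $Z_n\Psi_{n-1}(z)$ is the intended one, and likewise $X_{n-1}^\ast\Phi_{n-1}(z)$ in the formula for $z\Psi_n(z)$; the statement has index typos there.
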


\begin{proof} Define $F\in \cF(\N;\C^{L+1})$ by $F(p)= \Phi_p(z) \xi$ for 
$\xi \in \C^{L+1}$, then Lemma \ref{lem:JonFeigenfunctionJastJandback}(i) holds for $k=0$. Since all solutions are given by $\Psi_p(z)\eta$ for some $\eta\in \C^{L+1}$, we 
get $\bigl( J^\ast F\bigr)(p) = \Psi_p(z)\eta$. Taking $p=0$ and $\Phi_0(z)=\Id=\Psi_0(z)$  gives 
$\eta = \bigl( J^\ast F\bigr)(0) = Z_1^\ast \Phi_1(z)\xi + Y_0^\ast \xi$. 
Since $\xi\in \C^{L+1}$ is arbitrary, the result follows. The other case follows similarly. 

Now apply $J$ to the first equation and $J^\ast$ to the second, so that 
\eqref{eq:defPhiPsiMOP} gives the second result. Specalising $n=0$ gives the final statement. 
\end{proof}

Since $\Phi_1$, $\Psi_1$ are matrix polynomials of degree $1$, Corollary \ref{cor:lem:JonFeigenfunctionJastJandback} shows that 
\begin{equation}
\det \bigl( X_0 \Psi_{1}(z) + Y_0 \bigr) = C_1z^{a_1},  \ \ 
\det \bigl( Z_1^\ast \Phi_1(z)+ Y_0^\ast\bigr) = C_2 z^{a_2}, 
\quad a_1+a_2=L+1, \ C_1C_2=1
\end{equation}
for $a_1, a_2\in \N_{\geq 1}$. 

Corollary \ref{cor:lem:JonFeigenfunctionJastJandback} implies that 
\begin{equation}
\J \begin{pmatrix} \Phi_\bullet (w^2) T\eta \\ w \Psi_\bullet(w^2)\eta \end{pmatrix} 
= w \begin{pmatrix} \Phi_\bullet (w^2) T\eta \\ w \Psi_\bullet(w^2)\eta \end{pmatrix}, 
\qquad T=X_0\Psi_1(w^2) + Y_0, \ w\not=0,
\end{equation}
so that 
$\Om \begin{pmatrix} \Phi_p (w^2) T\eta \\ w \Psi_p(w^2)\eta \end{pmatrix}
= \begin{pmatrix} P_{2p}(w)\, \xi \\ P_{2p+1}(w)\, \xi \end{pmatrix}
$ and taking $p=0$ gives 
$\begin{pmatrix} T\eta \\ w \eta \end{pmatrix}
= \Om \begin{pmatrix} \xi \\ P_{1}(w)\, \xi \end{pmatrix}
$
in accordance with Proposition \ref{prop:relationPnPhinPsin}. 
Note that now the number of degrees of freedom is $L+1$ by 
Lemma \ref{lem:cyclicvectorsOmegabbJOmega} and Section \ref{ssec:differenceeqtnmatrixOP}, but there are $2(L+1)$ conditions. 
So we obtain a compatibility condition, which proves the first statement of Lemma \ref{lem:compatibilityPhiPsi}

\begin{lemma}\label{lem:compatibilityPhiPsi} 
For $w\in \C$ and $\xi = \sum_{i=0}^L \xi_i v_i\in \C^{L+1}$ we have 
\begin{equation*} 
(Z_1^\ast \Phi_1(w^2) + Y_0^\ast) 
\begin{pmatrix}
\xi_0\\ \xi_1\\ \vdots \\ \xi_{L-1} \\ (P_1(w)\xi)_0 
\end{pmatrix}
= 
w
\begin{pmatrix}
\xi_L\\ (P_1(w)\xi)_1\\ \vdots \\ (P_1(w)\xi)_{L-1} \\ (P_1(w)\xi)_L 
\end{pmatrix} 
\end{equation*}
and 
\begin{equation*} 
(X_0\Psi_1(w^2) + Y_0)
\begin{pmatrix}
\xi_L\\ (P_1(w)\xi)_1\\ \vdots \\ (P_1(w)\xi)_{L-1} \\ (P_1(w)\xi)_L 
\end{pmatrix} 
= w
\begin{pmatrix}
\xi_0\\ \xi_1\\ \vdots \\ \xi_{L-1} \\ (P_1(w)\xi)_0 
\end{pmatrix}. 
\end{equation*}
\end{lemma}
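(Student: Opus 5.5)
The two identities are read off from Proposition \ref{prop:relationPnPhinPsin} at $p=1$, combined with Corollary \ref{cor:lem:JonFeigenfunctionJastJandback}; equivalently, from the $\J$-eigenfunction relation with $w\Id$.

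First write out $\Om$ explicitly at $p=0$. By Lemma \ref{lem:rangekernelT}, $\Om$ swaps $v^1_L$ and $v^2_0$ and fixes the other basis vectors. Put
\[
\si=\begin{pmatrix}\xi_0\\ \vdots\\ \xi_{L-1}\\ (P_1(w)\xi)_0\end{pmatrix},
\qquad
\tau=\begin{pmatrix}\xi_L\\ (P_1(w)\xi)_1\\ \vdots\\ (P_1(w)\xi)_L\end{pmatrix},
\]
so that $\Om(\xi\oplus P_1(w)\xi)=\si\oplus\tau$. Proposition \ref{prop:relationPnPhinPsin} then says
\[
F(p)=\begin{pmatrix}P_{2p}(w)\xi\\ P_{2p+1}(w)\xi\end{pmatrix}
=\Om\begin{pmatrix}\Phi_p(w^2)\si\\ \Psi_p(w^2)\tau\end{pmatrix}.
\]
Since $\Om\J\Om F=wF$, the function $G=\Om F=(\Phi_\bullet(w^2)\si,\Psi_\bullet(w^2)\tau)$ satisfies $\J G=wG$. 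Componentwise this reads
\[
J^\ast(\Phi_\bullet(w^2)\si)=w\,\Psi_\bullet(w^2)\tau,
\qquad
J(\Psi_\bullet(w^2)\tau)=w\,\Phi_\bullet(w^2)\si .
\]

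Next evaluate both equations at $n=0$. Corollary \ref{cor:lem:fromTonFNtoTonFNCN2} together with $X_{-1}=0$ gives
\[
\bigl(J^\ast(\Phi_\bullet\si)\bigr)(0)=Z_1^\ast\Phi_1(w^2)\si+Y_0^\ast\si .
\]
Using $\Psi_0=\Id$, the first equation becomes $(Z_1^\ast\Phi_1(w^2)+Y_0^\ast)\si=w\tau$, which is the first claim. Similarly, Corollary \ref{cor:lem:fromTonFNtoTonFNCN} with $Z_0=0$ gives
\[
\bigl(J(\Psi_\bullet\tau)\bigr)(0)=X_0\Psi_1(w^2)\tau+Y_0\tau ,
\]
so the second equation becomes $(X_0\Psi_1(w^2)+Y_0)\tau=w\si$, which is the second claim. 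The same result can be read off from Corollary \ref{cor:lem:JonFeigenfunctionJastJandback}, which gives $J^\ast\Phi_\bullet(z)\si=\Psi_\bullet(z)(Z_1^\ast\Phi_1+Y_0^\ast)\si$; comparing at $n=0$ yields the identical statement.

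The main obstacle is justifying $\J G=wG$ rather than only $\J^2G=w^2G$, since the proof of Proposition \ref{prop:relationPnPhinPsin} matched solutions of the squared equation. This step is immediate once one notes that $G=\Om F$ exactly, with $\Om^2=\Id$, and that $F$ is an eigenfunction of $\Om\J\Om$ with eigenvalue $w$ by \eqref{eq:recurrenceOmbbJOm} and \eqref{eq:MOPPn}. So I would phrase the proof directly through $F$ and avoid any degrees-of-freedom argument. A second, routine point is to check carefully that the index bookkeeping of $\Om$, swapping $(\xi)_L$ with $(P_1(w)\xi)_0$, produces exactly the displayed vectors.
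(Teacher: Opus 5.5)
Your argument is correct, and it takes a more direct route than the paper's.

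The paper starts from the Darboux relation of Corollary \ref{cor:lem:JonFeigenfunctionJastJandback}. For $w\neq0$ it observes that $\bigl(\Phi_\bullet(w^2)T\eta,\ w\Psi_\bullet(w^2)\eta\bigr)$, with $T=X_0\Psi_1(w^2)+Y_0$, is a $\J$-eigenfunction for the eigenvalue $w$. It then uses the $(L+1)$ degrees of freedom of the $\J$-eigenvalue problem to identify this eigenfunction with $\Om F_w^{\xi}$ and reads off $\si=T\eta$, $\tau=w\eta$. To reach $(Z_1^\ast\Phi_1(w^2)+Y_0^\ast)\si=w\tau$ it invokes the factorisation $(Z_1^\ast\Phi_1(z)+Y_0^\ast)(X_0\Psi_1(z)+Y_0)=z\Id$. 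Finally it extends to $w=0$ by polynomiality and derives the second identity from the first.

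You instead use that Proposition \ref{prop:relationPnPhinPsin} already identifies $\Om F_w^{\xi}$ exactly, for every $w\in\C$. It is worth saying this explicitly: its proof only uses that solutions of $JJ^\ast G=zG$ and $J^\ast J G=zG$ on all of $\N$ are determined by $G(0)$. With $\Om^2=\Id$, the relation $\Om\J\Om F_w^{\xi}=wF_w^{\xi}$ becomes $\J(\Om F_w^{\xi})=w\,\Om F_w^{\xi}$. Evaluating its two components $J^\ast(\Phi_\bullet\si)=w\Psi_\bullet\tau$ and $J(\Psi_\bullet\tau)=w\Phi_\bullet\si$ at $n=0$, via the $p=0$ rows of Corollaries \ref{cor:lem:fromTonFNtoTonFNCN} and \ref{cor:lem:fromTonFNtoTonFNCN2}, gives both claims.

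Your route has several advantages:
\begin{enumerate}[(i)]
\item it needs no restriction $w\neq0$ and no continuation argument;
\item it never identifies a second family of $\J$-eigenfunctions with $\Om F_w^{\xi}$;
\item it does not use the factorisation identity;
\item it obtains both identities independently.
\end{enumerate}
The paper's route instead shows the lemma as a compatibility condition between the Darboux parametrisation $(\si,\tau)=(T\eta,w\eta)$ of the $\J$-eigenspace and its parametrisation by $\xi$. That ties it to the factorisation in Corollary \ref{cor:lem:JonFeigenfunctionJastJandback}. Your closing remark via that corollary is correct but not needed for your proof.
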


\begin{proof} The vector in $\C^{L+1}$ on the right hand side of the first equation 
equals the last $L+1$-components of 
$\begin{pmatrix} T\eta \\ w \eta \end{pmatrix}
= \Om \begin{pmatrix} \xi \\ P_{1}(w)\, \xi \end{pmatrix} 
\in \C^{2(L+1)}
$
and so equals $w\eta$. The vector in $\C^{L+1}$ on the left hand side of the first equation 
equals the last $L+1$-components of 
$\begin{pmatrix} T\eta \\ w \eta \end{pmatrix}
= \Om \begin{pmatrix} \xi \\ P_{1}(w)\, \xi \end{pmatrix} 
\in \C^{2(L+1)}
$ and equals $T\eta$. Rewriting this using the last statement of Corollary \ref{cor:lem:JonFeigenfunctionJastJandback} gives the first statement. Initially, this derivation holds for $w\not=0$, but since it is a polynomial identity in $w$ it follows for $w=0$ as well. 

The second statement follows from the first using the 
last statement of Corollary \ref{cor:lem:JonFeigenfunctionJastJandback}. 
\end{proof}

\begin{remark}
Note that we have used Lemma \ref{lem:JonFeigenfunctionJastJandback} only for the simplest case $k=0$.
More generally, Lemma \ref{lem:JonFeigenfunctionJastJandback} can also be used to 
calculate the action of $J^\ast$ and $J$ on the asymptotically 
free solutions to \eqref{eq:JJastonF} and on \eqref{eq:JastJonF}. 
This gives a way of relating the associated polynomials 
$\Phi_n^{(1)}$ to $\Psi_n^{(1)}$, but one also needs the 
polynomials $\Psi_n$ as well as the second associated polynomials. We will not need this here, since we relate the Stieltjes transforms of the measures in another way to each other, see Corollary  \ref{cor:thm:weightsfrompartialisometryforJandJast}. 
\end{remark}

By $P_n^{(1)}$ we denote the associated matrix polynomials for the matrix polynomials $P_n$, i.e. $P_n^{(1)}$ solves \eqref{eq:MOPPn} with initial conditions
$P_0^{(1)}(z)=0$ and $P_1^{(1)}(z)=A_0^{-1}=(S_0^{12})^{-1}$. Similarly, the associated matrix polynomials 
for $\Phi_n$, respectively $\Psi_n$, are the matrix polynomials 
$\Phi_n^{(1)}$, respectively $\Psi_n^{(1)}$, satsifying the first, respectively
second, equation of \eqref{eq:defPhiPsiMOP} with initial conditions
$\Phi_0^{(1)}(z)=0$ and $\Phi_1^{(1)}(z)=(A_0')^{-1}$, respectively
$\Psi_0^{(1)}(z)=0$ and $\Psi_1^{(1)}(z)=(A_0'')^{-1}$. Note that the associated 
polynomials do not satisfy \eqref{eq:MOPPn} and \eqref{eq:defPhiPsiMOP} for $n=0$,
but that they satisfy, see Section \ref{ssec:differenceeqtnmatrixOP}, 
\begin{equation}\label{eq:3termrecurrenceforassociatedpols}
\begin{split}
\de_{n,0}\Id+ z\,P^{(1)}_n(z) &\, = A_n \, P^{(1)}_{n+1}(z) + B_n \, P^{(1)}_n(z) + A_{n-1}^\ast P^{(1)}_{n-1}(z), \\
\de_{n,0}\Id+ z \, \Phi^{(1)}_n(z) &\, = A'_n\, \Phi^{(1)}_{n+1}(z) + B'_n\, \Phi^{(1)}_n(z) + (A'_{n-1})^\ast \Phi^{(1)}_{n-1}(z), \\
\de_{n,0}\Id+ z \, \Psi^{(1)}_n(z) &\, = A''_n\, \Psi^{(1)}_{n+1}(z) + B''_n\, \Psi^{(1)}_n(z) + (A''_{n-1})^\ast \Psi^{(1)}_{n-1}(z).
\end{split}
\end{equation}

Now observe that 
\[
\Bigl( \Om\J \Om 
\begin{pmatrix}
P_{2\bullet}^{(1)}(w)\, \xi \\[1mm] P_{2\bullet +1}^{(1)}(w)\, \xi 
\end{pmatrix} \Bigr)(p)
= w
\begin{pmatrix}
P_{2p}^{(1)}(w)\, \xi \\[1mm] P_{2p+1}^{(1)}(w)\, \xi 
\end{pmatrix}
+ \de_{p,0} 
\begin{pmatrix} \xi \\[1mm] 0 \end{pmatrix}
\]
so that 
\begin{equation}\label{eq:OmJ2Omassociatedpols}
\begin{split}
\Bigl( \Om\J^2 \Om 
\begin{pmatrix}
P_{2\bullet}^{(1)}(w)\, \xi \\[1mm] P_{2\bullet +1}^{(1)}(w)\, \xi 
\end{pmatrix} \Bigr)(p)
&\, = w \Bigl(
\begin{pmatrix}
P_{2p}^{(1)}(w)\, \xi \\[1mm] P_{2p+1}^{(1)}(w)\, \xi 
\end{pmatrix}
+ \de_{p,0} 
\begin{pmatrix} \xi \\[1mm] 0 \end{pmatrix}\Bigr) 
+ \bigl( \Om \J \Om (
\de_{\bullet,0} 
\begin{pmatrix} \xi \\[1mm] 0 \end{pmatrix} )
\bigr)(p) \\
&\, = w^2 
\begin{pmatrix}
P_{2p}^{(1)}(w)\, \xi \\[1mm] P_{2p+1}^{(1)}(w)\, \xi 
\end{pmatrix}
+ \de_{p,0} 
\begin{pmatrix} (w+S_0^{11})\xi \\[1mm] S^{21}_0\, \xi \end{pmatrix}.
\end{split}
\end{equation}

\begin{prop}\label{prop:relationassociatedPnPhinPsin} We have 
\begin{align*}
\Om 
\begin{pmatrix} P_{2p}^{(1)}(w)\, \xi \\[1mm]
                           P_{2p+1}^{(1)}(w)\, \xi 
                           \end{pmatrix}
= &\  
\begin{pmatrix} \Phi_p(w^2) & 0 \\[1mm] 0 & \Psi_p(w^2) \end{pmatrix} \Om 
\begin{pmatrix}  0 \\[1mm] (S^{12}_0)^{-1} \, \xi \end{pmatrix} 
\\ &\, +  
\begin{pmatrix} \Phi_p^{(1)}(w^2) & 0 \\[1mm] 0 & \Psi_p^{(1)}(w^2) \end{pmatrix} \Om 
\begin{pmatrix} (w\Id +S_0^{11})\xi \\[1mm] S^{21}_0\, \xi \end{pmatrix}
\end{align*}
\end{prop}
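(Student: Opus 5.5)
The plan is a uniqueness argument for the inhomogeneous recurrence. Put
\[
H(p) = \Om \begin{pmatrix} P^{(1)}_{2p}(w)\xi \\ P^{(1)}_{2p+1}(w)\xi\end{pmatrix}\in \C^{L+1}\oplus\C^{L+1},
\]
and write $H=(H_1,H_2)$. Since $\Om^2=\Id$ and $\Om\J^2\Om$ is the conjugate of $\J^2=\mathrm{diag}(JJ^\ast,J^\ast J)$, conjugating \eqref{eq:OmJ2Omassociatedpols} by $\Om$ gives
\[
\bigl(\mathrm{diag}(JJ^\ast,J^\ast J) H\bigr)(p) = w^2 H(p) + \de_{p,0}\, \Om \begin{pmatrix} (w\Id+S_0^{11})\xi \\ S^{21}_0\xi\end{pmatrix}.
\]
So $H_1$ solves the recurrence \eqref{eq:JJastonF} at $z=w^2$ with an inhomogeneity $\de_{p,0}\eta_1$ at $p=0$, and $H_2$ solves \eqref{eq:JastJonF} with inhomogeneity $\de_{p,0}\eta_2$. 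Here $(\eta_1,\eta_2)=\Om\bigl((w\Id+S_0^{11})\xi,\, S^{21}_0\xi\bigr)$.

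Next, every solution of such an inhomogeneous recurrence has a fixed form. The general solution of $z\,G(p)=A'_pG(p+1)+B'_pG(p)+(A'_{p-1})^\ast G(p-1)+\de_{p,0}\eta_1$, for $p\geq 0$ with $A'_{-1}=0$, is $G(p)=\Phi_p(z)\si+\Phi^{(1)}_p(z)\eta_1$. To see this, note that $\Phi^{(1)}_\bullet(z)\eta_1$ is a particular solution by \eqref{eq:3termrecurrenceforassociatedpols}. The difference of two solutions solves the homogeneous recurrence for all $p\geq0$. Because $A'_p$ is invertible (Lemma \ref{lem:ApaccentApdoubleaccentinvertible}), that difference is determined by its value at $p=0$, so it equals $\Phi_\bullet(z)\si$ with $\si$ that initial value. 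The same reasoning applies to $H_2$, with $\Psi$, $\Psi^{(1)}$ and $A''_p$.

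It remains to identify $(\si,\tau)$. Since $\Phi_0=\Psi_0=\Id$ and $\Phi_0^{(1)}=\Psi_0^{(1)}=0$, we get $(\si,\tau)=H(0)=\Om\bigl(P_0^{(1)}(w)\xi,\,P_1^{(1)}(w)\xi\bigr)=\Om\bigl(0,\,(S_0^{12})^{-1}\xi\bigr)$. This uses $P_0^{(1)}=0$ and $P_1^{(1)}=A_0^{-1}=(S^{12}_0)^{-1}$, and it gives exactly the stated formula.

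The main point to check carefully is the inhomogeneous term in \eqref{eq:OmJ2Omassociatedpols}, especially that only the $p=0$ component appears. Applying $\Om\J\Om$ to $\de_{\bullet,0}(\xi\oplus 0)$ yields only $F_{0,S^{11}_0\xi\oplus S^{21}_0\xi}$, because $(T^{21}_0)^\ast$ acts on the second component, which is zero here, and there is no $p=-1$ term. So the inhomogeneity is supported at $p=0$ and has the stated value. Then I would justify that conjugation by $\Om$ passes through: $\Om$ acts pointwise and is self-inverse, so $\Om(\Om\J^2\Om)\Om=\J^2$ and the inhomogeneity becomes $\Om$ applied to the vector. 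No other obstacles are expected.
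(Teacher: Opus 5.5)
Your argument is correct and is essentially the paper's proof. You conjugate \eqref{eq:OmJ2Omassociatedpols} by $\Om$, use that every solution of the inhomogeneous recurrence for $\J^2=\mathrm{diag}(JJ^\ast,J^\ast J)$ has the form $\Phi_\bullet\si+\Phi^{(1)}_\bullet\si^{(1)}$ (resp.\ $\Psi_\bullet\tau+\Psi^{(1)}_\bullet\tau^{(1)}$), and fix the constants from the inhomogeneity and the value at $p=0$; you also make the uniqueness step explicit via invertibility of $A'_p$, $A''_p$, which the paper only asserts.

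One small slip: the inhomogeneous recurrence should read $z\,G(p)+\de_{p,0}\eta_1=A'_pG(p+1)+B'_pG(p)+(A'_{p-1})^\ast G(p-1)$, as your first display and \eqref{eq:3termrecurrenceforassociatedpols} give, since with the sign you wrote the particular solution would be $-\Phi^{(1)}_\bullet(z)\eta_1$ instead.
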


\begin{proof} Using \eqref{eq:defPhiPsiMOP} and \eqref{eq:3termrecurrenceforassociatedpols}, we note that for 
$\si, \tau, \si^{(1)}, \tau^{(1)}\in \C^{L+1}$ 
\[
\Bigl( 
\J^2 \begin{pmatrix} \Phi_\bullet(z)\si + \Phi_\bullet^{(1)}(z)\si^{(1)} \\ \Psi_\bullet(z)\tau + \Psi_\bullet^{(1)}(z)\tau^{(1)} \end{pmatrix}  \Bigr)(p)
= z \begin{pmatrix} \Phi_p(z)\si + \Phi_p^{(1)}(z)\si^{(1)} \\ \Psi_p(z)\tau + \Psi_p^{(1)}(z)\tau^{(1)} \end{pmatrix} 
+ \de_{p,0} \begin{pmatrix} \si^{(1)} \\ \tau^{(1)} \end{pmatrix}
\]
gives the general solution, so that comparing with 
\eqref{eq:OmJ2Omassociatedpols} gives 
\begin{equation*}
\begin{pmatrix}
P_{2p}^{(1)}(w)\, \xi \\[1mm] P_{2p+1}^{(1)}(w)\, \xi 
\end{pmatrix} = \Om 
\begin{pmatrix} \Phi_p(w^2)\si + \Phi_p^{(1)}(w^2)\si^{(1)} \\ \Psi_p(w^2)\tau + \Psi_p^{(1)}(w^2)\tau^{(1)} \end{pmatrix}, \qquad 
\begin{pmatrix} (w\Id +S_0^{11})\xi \\[1mm] S^{21}_0\, \xi \end{pmatrix}
= 
\Om \begin{pmatrix} \si^{(1)} \\ \tau^{(1)} \end{pmatrix},
\end{equation*}
and it suffices to evaluate $\si, \si^{(1)}, \tau,\tau^{(1)} \in \C^{L+1}$ from 
this condition for $p=0$. Since $P_1^{(1)}(z) = (S_0^{12})^{-1}$, this gives
\begin{equation*}
\Om \begin{pmatrix}
0 \\[1mm] (S_0^{12})^{-1}\, \xi 
\end{pmatrix} = 
\begin{pmatrix} \si  \\ \tau \end{pmatrix}, \qquad 
\Om \begin{pmatrix} (w\Id +S_0^{11})\xi \\[1mm] S^{21}_0\, \xi \end{pmatrix}
= \begin{pmatrix} \si^{(1)} \\ \tau^{(1)} \end{pmatrix},
\end{equation*}
and this gives the result.
\end{proof}

Next we want to connect the Stieltjes transforms of the measures. Recall that we assume bounded coefficients, i.e. 
$\sup_{n\in \N} \bigl( |a_n| + \sum_{k=0}^L |b_{n,k}|\bigr) <\infty$. Then all the operators involved are bounded operators, and in particular $JJ^\ast$ and $J^\ast J$ as 
\eqref{eq:JJastonF} and \eqref{eq:JastJonF} are bounded, positive,  
self-adjoint operators. 

The $(L+1)\times(L+1)$-matrix polynomial families  $(P_n)_{n\in\N}$,  
$(\Phi_n)_{n\in\N}$, and $(\Psi_n)_{n\in\N}$ are orthonormal matrix polynomials 
with respect to the matrix measures $\rho$, $\mu$ and $\nu$. Explicitly, 
\begin{equation}\label{eq:defMmeasuresrhomunuPnPhinPsin}
\begin{split}
\int_\R P_n(x)\,  d\rho(x)\, (P_n(x))^\ast  &\, = 
\int_\R P_n(x) W^\rho(x) (P_n(x))^\ast \, d\rho_\Tr(x)=\de_{m,n} \Id, \\ 
\int_\R \Phi_n(x)\,  d\mu(x)\, (\Phi_n(x))^\ast  &\, = 
\int_\R \Phi_n(x) W^\mu(x) (\Phi_n(x))^\ast \, d\mu_\Tr (x) =\de_{m,n} \Id, \\ 
\int_\R \Psi_n(x)\,  d\nu(x)\, (\Psi_n(x))^\ast  &\, = 
\int_\R \Psi_n(x) W^\nu(x) (\Psi_n(x))^\ast \, d\nu_\Tr (x) =\de_{m,n} \Id. 
\end{split}
\end{equation}
The notation follows from the discussion in Section \ref{sec:recapMOP}. 
Let $M^\rho$, $M^\mu$, $M^\nu$ be the respective Stieltjes transforms of the 
matrix measures involved as in Section \ref{sec:recapMOP}. Note that 
$\supp(\mu)=\supp(\mu_\Tr)\subset [0,\infty)$ and 
$\supp(\nu)=\supp(\nu_\Tr)\subset [0,\infty)$, since this corresponds to the spectrum of the positive operators $JJ^\ast$ and $J^\ast J$. Moreover, 
as $JJ^\ast$ and $J^\ast J$ are Darboux transformations, we have  
$\supp(\mu_\Tr) \setminus\{0\} =\supp(\nu_\Tr) \setminus\{0\}$, see Deift \cite[Thm.~2]{Deif}. 
In the next result we relate the Stieltjes transforms of these matrix measures.

\begin{thm}\label{thm:relationStieltjestransforms} 
The Stieltjes transforms $M^\rho$, $M^\mu$ and $M^\nu$ are related via 
\begin{align*}
\begin{pmatrix}
M^\mu(w^2) & 0 \\
0 & M^\nu(w^2)
\end{pmatrix}
\Om 
\begin{pmatrix}
w\Id + S^{11}_0  & 0 \\
0 & S_0^{21}
\end{pmatrix} 
= \Om 
\begin{pmatrix}
M^\rho(w) & 0 \\
0 & (S_0^{12})^{-1}+P_1(w) M^\rho (w)
\end{pmatrix}
\end{align*}
for all $w\in \C\setminus\R$. 
\end{thm}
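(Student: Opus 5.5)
The plan is to characterise the Stieltjes transforms through the $\ell^2$-condition on the asymptotically free solutions. I will then use Propositions \ref{prop:relationPnPhinPsin} and \ref{prop:relationassociatedPnPhinPsin} to move the free solution of $\Om\J\Om$ to free solutions of $JJ^\ast$ and $J^\ast J$.

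By Section \ref{ssec:differenceeqtnmatrixOP}, for $w\in\C\setminus\R$ the function $n\mapsto P^{(1)}_n(w)+P_n(w)M^\rho(w)$ is the unique solution of the inhomogeneous recurrence \eqref{eq:3termrecurrenceforassociatedpols} (first line) whose columns lie in $\ell^2$. Uniqueness follows because $w\notin\si(\Om\J\Om)$. The same holds for $\Phi^{(1)}_n(z)+\Phi_n(z)M^\mu(z)$ and $\Psi^{(1)}_n(z)+\Psi_n(z)M^\nu(z)$ with $z=w^2$, provided $z\notin[0,\infty)$. This is automatic when $w\notin\R\cup i\R$, and the remaining case $w\in i\R$ will be handled by analytic continuation at the end.

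Fix $\xi\in\C^{L+1}$ and form
\[
F(p)=\Om\begin{pmatrix} P^{(1)}_{2p}(w)\xi+P_{2p}(w)M^\rho(w)\xi\\ P^{(1)}_{2p+1}(w)\xi+P_{2p+1}(w)M^\rho(w)\xi\end{pmatrix},
\]
which lies in $\ell^2(\N;\C^{L+1}\oplus\C^{L+1})$. I expand it using Proposition \ref{prop:relationPnPhinPsin}, applied with $\xi$ replaced by $M^\rho(w)\xi$, together with Proposition \ref{prop:relationassociatedPnPhinPsin}. This writes $F(p)$ as
\[
\mathrm{diag}(\Phi_p(w^2),\Psi_p(w^2))\,\alpha+\mathrm{diag}(\Phi^{(1)}_p(w^2),\Psi^{(1)}_p(w^2))\,\beta,
\]
with
\[
\beta=\Om\begin{pmatrix}(w+S_0^{11})\xi\\ S_0^{21}\xi\end{pmatrix},\qquad
\alpha=\Om\begin{pmatrix} M^\rho(w)\xi\\ (S_0^{12})^{-1}\xi+P_1(w)M^\rho(w)\xi\end{pmatrix}.
\]
To get $\alpha$ I combine $\Om\begin{pmatrix}0\\(S_0^{12})^{-1}\xi\end{pmatrix}$ with $\Om\begin{pmatrix}M^\rho\xi\\P_1(w)M^\rho\xi\end{pmatrix}$, using the formula for $P_1$ and $\Om^2=\Id$.

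Each block component of $F$ is square summable and solves the inhomogeneous $\Phi$- (respectively $\Psi$-) recurrence with inhomogeneity $\de_{p,0}$ times the corresponding component of $\beta$. By the uniqueness statement, the first component must equal $(\Phi^{(1)}_p+\Phi_pM^\mu)(w^2)\beta_1$ and the second must equal $(\Psi^{(1)}_p+\Psi_pM^\nu)(w^2)\beta_2$. The uniqueness holds because the difference of two such solutions is an $\ell^2$ eigenfunction of $JJ^\ast$ (or $J^\ast J$) for the non-real eigenvalue $w^2$, hence zero. Comparing the coefficients of $\Phi_p$ and $\Psi_p$, which is legitimate because $\Phi_\bullet\eta$ for $\eta\neq0$ is not in $\ell^2$ while the difference is, gives $\alpha=\mathrm{diag}(M^\mu(w^2),M^\nu(w^2))\beta$. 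Since $\xi$ is arbitrary, this is exactly the claimed matrix identity; the diagonal block matrix in the statement encodes $\beta$, and the right-hand block matrix encodes $\alpha$.

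The main obstacle is the case $w$ purely imaginary, where $w^2<0$. There I would argue either that $w^2\notin\si(JJ^\ast)\cup\si(J^\ast J)\subset[0,\infty)$, so the $\ell^2$-uniqueness still applies directly, or by continuity of both sides in $w$. A further subtle point is to make sure the inhomogeneity terms match exactly, including the $\Om$-placement and the term $w+S^{11}_0$ from \eqref{eq:OmJ2Omassociatedpols}. This is already encoded in Proposition \ref{prop:relationassociatedPnPhinPsin}, so I will only need to verify it, not rederive it.
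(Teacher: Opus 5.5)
Your route is the paper's: you expand the $\ell^2$ free solution of $\Om\J\Om$ using Propositions \ref{prop:relationPnPhinPsin} and \ref{prop:relationassociatedPnPhinPsin}, and read off the free solutions of the $\Phi$- and $\Psi$-recurrences blockwise. The one difference is how you identify $\alpha=\mathrm{diag}(M^\mu(w^2),M^\nu(w^2))\,\beta$. You use $\ell^2$-uniqueness, whereas the paper inverts $R^{(1)}_\Phi(w)$ and $R^{(1)}_\Psi(w)$ via $\det R^{(1)}_\Phi(w)=w^Lb_{L,L}$ and $\det R^{(1)}_\Psi(w)=w\prod_{p=0}^{L-1}a_p$. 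Your version needs no invertibility and is slightly cleaner. The imaginary axis is no obstacle, since $w\notin\R$ already gives $w^2\notin[0,\infty)\supset\si(JJ^\ast)\cup\si(J^\ast J)$.

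The genuine gap is your last sentence. We have $\beta=\Om\,\mathrm{diag}(w\Id+S^{11}_0,S^{21}_0)(\xi\oplus\xi)$ and $\alpha=\Om\,\mathrm{diag}\bigl(M^\rho(w),(S^{12}_0)^{-1}+P_1(w)M^\rho(w)\bigr)(\xi\oplus\xi)$. So you have only shown that the two sides of the stated identity agree on vectors $\xi\oplus\xi$, i.e. the stated identity multiplied on the right by $\begin{pmatrix}\Id\\ \Id\end{pmatrix}$:
\begin{equation*}
\begin{pmatrix} M^\mu(w^2) & 0\\ 0 & M^\nu(w^2)\end{pmatrix}\Om\begin{pmatrix} w\Id+S^{11}_0\\ S^{21}_0\end{pmatrix}=\Om\begin{pmatrix} M^\rho(w)\\ (S^{12}_0)^{-1}+P_1(w)M^\rho(w)\end{pmatrix}.
\end{equation*}
The block-diagonal identity also demands agreement on $\xi\oplus0$ and $0\oplus\eta$ separately. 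That does not follow, and in fact it fails in general.

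To see this, apply both sides to $0\oplus v_L$ and use $S^{21}_0v_L=b_{L,L}v_0$. The first block of the left-hand side is $b_{L,L}M^\mu(w^2)v_L$, while that of the right-hand side is a multiple of $v_L$. This would force $M^\mu(w^2)_{i,L}=0$ for $0\le i<L$. However, $M^\mu(z)_{0,L}=-(B'_0)_{0,L}z^{-2}+O(z^{-3})$ as $z\to\infty$. From $B'_0=X_0X_0^\ast+Y_0Y_0^\ast$ one gets $(B'_0)_{0,L}=b_{0,0}\overline{b_{L,L}}+a_0\overline{b_{L,L-1}}$, which is non-zero, e.g.\ when $b_{0,0}\neq0=b_{L,L-1}$.

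The paper's proof makes the same slip in its final implication, where $\mathrm{diag}(R^{(1)}_\Phi(w),R^{(1)}_\Psi(w))$ is tacitly replaced by $\Om\,\mathrm{diag}(w\Id+S^{11}_0,S^{21}_0)\,\Om$. The correct relation is $\begin{pmatrix}R^{(1)}_\Phi(w)\\ R^{(1)}_\Psi(w)\end{pmatrix}=\Om\begin{pmatrix}w\Id+S^{11}_0\\ S^{21}_0\end{pmatrix}$. The two $2(L+1)\times 2(L+1)$ matrices agree on vectors $\xi\oplus\xi$ but are different matrices in general. So your argument, like the paper's, proves the column identity displayed above, and that is what you should state as the conclusion.
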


Note that the Stieltjes transforms $M^\mu$ and $M^\nu$ are well defined on $(-\infty, 0)$, since the support of the measures $\mu$ and $\nu$ is contained in $[0,\infty)$. 
In Corollary \ref{cor:thm:weightsfrompartialisometryforJandJast} we connect the Stieltjes transforms for the matrix measures $\nu$ and $\mu$.

\begin{proof} Since the operators involved are bounded, we know from Section \ref{ssec:differenceeqtnmatrixOP} that the asymptotically free solutions for the recurrence relation corresponding to the matrix polynomials $P_n$ satisfy
$\sum_{p=0}^\infty \| \bigl( P^{(1)}_p(w) + P_p(w) M^\rho(w)\bigr) \xi\|^2_{\C^{L+1}}<\infty$. This then gives 
\begin{align*}
\infty > &\, \sum_{p=0}^\infty \| \begin{pmatrix} \bigl( P^{(1)}_{2p}(w) + P_{2p}(w) M^\rho(w)\bigr) \xi \\ 
\bigl( P^{(1)}_{2p+1}(w) + P_{2p+1}(w) M^\rho(w)\bigr) \xi                     
\end{pmatrix}
\|^2_{\C^{2(L+1)}} \\
= &\, \sum_{p=0}^\infty \| \begin{pmatrix} 
\Phi_p(w^2) & 0 \\ 0 & \Psi_p(w^2)
\end{pmatrix} \Om 
\begin{pmatrix}
M^\rho(w)\xi \\ 
((S_0^{12})^{-1} + P_1(w)M^\rho(w))\xi
\end{pmatrix}
+ 
\\ &\, \qquad\qquad\qquad\qquad\qquad\qquad\qquad\qquad
\begin{pmatrix} 
\Phi^{(1)}_p(w^2) & 0 \\ 0 & \Psi^{(1)}_p(w^2)
\end{pmatrix} \Om 
\begin{pmatrix}
(w\Id +S_0^{11})\xi \\ 
S_0^{21} \xi
\end{pmatrix}
\|^2_{\C^{2(L+1)}},
\end{align*}
where we use Proposition \ref{prop:relationPnPhinPsin} and 
Proposition \ref{prop:relationassociatedPnPhinPsin}, 
$P_1(w) = (S_0^{12})^{-1}(w\Id -S_0^{11})$ and $\Om$ being an isometry.
For the first $L+1$ components we get 
\begin{equation}\label{eq:thmStieltjesPhi}
\sum_{p=0}^\infty 
\| \Phi_p(w^2) R_\Phi(w) \xi +  \Phi^{(1)}_p(w^2) R^{(1)}_\Phi(w) \xi \|_{\C^{L+1}}^2 <\infty,
\end{equation}
where 
\begin{equation*}
R_\Phi(w) \xi =
\begin{pmatrix}
(M^\rho(w)\xi)_0 \\ \vdots \\ (M^\rho(w)\xi)_{L-1} \\
\bigl( ((S_0^{12})^{-1} + P_1(w) M^\rho(w))\xi\bigr)_0 
\end{pmatrix}, \quad 
R^{(1)}_\Phi(w) \xi =
\begin{pmatrix}
((w\Id  + S_0^{11})\xi)_0 \\ \vdots \\ ((w\Id  + S_0^{11})\xi)_{L-1} \\
( S_0^{21}\xi)_0 
\end{pmatrix}.
\end{equation*}
Here the maps $R_\Phi(w)$ and $R^{(1)}_\Phi(w)$ do not depend on $p$, but they depend on $w\in \C\setminus\R$.  
So \eqref{eq:thmStieltjesPhi} gives the asymptotically free solutions for the recurrence relation related to the polynomials $\Phi_n$. 
Assuming $R^{(1)}_\Phi(w)$ invertible for the moment for $w\not=0$, we see that 
\begin{equation}\label{eq:thmStieltjesPhi1}
\sum_{p=0}^\infty 
\| \Phi_p(w^2) R_\Phi(w) (R^{(1)}_\Phi(w))^{-1} \eta +  \Phi^{(1)}_p(w^2) \eta \|_{\C^{L+1}}^2 <\infty, 
\end{equation}
so that $M^\mu(w^2) = R_\Phi(w) (R^{(1)}_\Phi(w))^{-1}$ is the Stieltjes transform for the matrix measure $\mu$ in $w^2$ for $w\not=0$ by 
comparing \eqref{eq:thmStieltjesPhi1} with Section 
\ref{ssec:differenceeqtnmatrixOP}.  
Using the explicit 
expressions for $S_0^{11}$ and $S_0^{21}= (S_0^{12})^\ast$ from Lemma
\ref{lem:rangekernelT} and  \eqref{eq:defS11andS22}, we see that 
\[
R^{(1)}_\Phi(w) = 
\begin{pmatrix}
w & 0 & \cdots & 0 & b_{0,0} \\
0 & w & \cdots & 0 & b_{1,1} \\
\vdots &  & \ddots &  & \vdots \\
0 &  & \ddots & w & b_{L-1,L-1} \\
0 &  & \cdots & 0 & b_{L,L}
\end{pmatrix}, 
\qquad  \det \bigl( R^{(1)}_\Phi(w) \bigr) = w^L b_{L,L}
\]
and invertibility for $w\not=0$ follows. 

Similarly, considering the last $\C^{L+1}$ components, the analogue of \eqref{eq:thmStieltjesPhi} for $\Psi_n$ is
\begin{equation}\label{eq:thmStieltjesPsi}
\sum_{p=0}^\infty 
\| \Psi_p(w^2) R_\Psi(w) \xi +  \Psi^{(1)}_p(w^2) R^{(1)}_\Psi(w) \xi \|_{\C^{L+1}}^2 <\infty, 
\end{equation}
where
\begin{equation*} 
R_\Psi(w) \xi =
\begin{pmatrix}
(M^\rho(w)\xi)_L \\ \bigl( ((S_0^{12})^{-1} + P_1(w)M^\rho(w))\xi\bigr)_1 \\ \vdots \\
\bigl( ((S_0^{12})^{-1} + P_1(w)M^\rho(w))\xi\bigr)_L
\end{pmatrix}, \quad 
R^{(1)}_\Psi(w) \xi =
\begin{pmatrix}
((w\Id  + S_0^{11})\xi)_L \\ ( S_0^{21}\xi)_0 \\ \vdots  \\
( S_0^{21}\xi)_L 
\end{pmatrix}.
\end{equation*}
Now we have a slightly more complicated matrix 
\[
R^{(1)}_\Psi(w) = 
\begin{pmatrix}
\overline{b_{0,0}} & \overline{b_{1,1}} & \cdots & \cdots &  \overline{b_{L-1,L-1}} & w \\
a_0                & \overline{b_{1,0}} & \cdots & \cdots & \overline{b_{L-1,L-2}} & 0 \\
 0                 & a_1                & \overline{b_{2,0}} & \cdots & \overline{b_{L-1,L-3}} & 0 \\
\vdots &  & \ddots & \ddots  & \vdots & \vdots \\
0 & \cdots &  & a_{L-2} & \overline{b_{L-1,0}} & 0 \\
0 & \cdots &  & 0 & a_{L-1} & 0
\end{pmatrix}, 
\qquad  \det \bigl( R^{(1)}_\Psi(w) \bigr) = w \prod_{p=0}^{L-1} a_p, 
\]
so that for $w\not=0$ we have 
$M^\nu(w^2) = R_\Psi(w) (R^{(1)}_\Psi(w))^{-1}$ for the Stieltjes transform of the matrix measure $\nu$. 

Combining the results for the Stieltjes transform of the matrix measures $\mu$ and $\nu$ gives 
\begin{align*}
&\,
\begin{pmatrix}
M^\mu(w^2) & 0 \\
0 & M^\nu(w^2)
\end{pmatrix}
\begin{pmatrix}
R^{(1)}_\Phi(w)& 0 \\
0 & R^{(1)}_\Psi(w) 
\end{pmatrix} 
= \begin{pmatrix}
R_\Phi(w)& 0 \\
0 & R_\Psi(w) 
\end{pmatrix} \quad \Longrightarrow  \\
&\,
\begin{pmatrix}
M^\mu(w^2) & 0 \\
0 & M^\nu(w^2)
\end{pmatrix}
\Om 
\begin{pmatrix}
w\Id + S^{11}_0  & 0 \\
0 & S_0^{21}
\end{pmatrix} \Om 
= \Om 
\begin{pmatrix}
M^\rho(w) & 0 \\
0 & (S_0^{12})^{-1}+P_1(w) M^\rho (w)
\end{pmatrix}
\Om
\end{align*}
and this gives the result. 
\end{proof}

\begin{remark}\label{rmk:caseLis0:sec:relationbetweenMOP}
Taking $L=0$ as in Remark 
\ref {rmk:caseLis0:sec:MOPrelatedtoJ}, 
obtain the relation $M^\mu(z)|b_0|^2 = 1+zM^\nu(z)$, which 
can be derived directly using the results on the 
Christoffel and Geronimus transformations (at zero), see \cite[Ch.~6, 7]{GarcMM}, \cite[\S 2.7]{Isma}.
\end{remark}

\begin{remark}\label{rmk:caseLis1:sec:relationbetweenMOP}
In case $L=1$, putting $b_{n,1}=a_{n-1}$, 
see Remark \ref{rmk:caseLis1:sec:MOPrelatedtoJ}, the 
Stieltjes transforms are studied in \cite{EichLY} in more detail, and also for the unbounded setting.  
Note that Eichinger et al. use the terminology Weyl matrix for the Stieltjes transform. In particular, the Stieltjes transform $M^\rho$ (or of $\J$) is symmetric, see \cite[Def.~3.3]{EichLY}, and it is completely determined 
by a measure, which is essentially the $(0,0)$-entry of the 
matrix measure, and a bounded function, see \cite{PushS} and \cite{EichLY} for details. 
\end{remark}

%%%%%%%%%%%%%%%%%%%%%%%%%%%%%%%%%
%%%%%%%%%%%%%%%%%%%%%%%%%%%%%%%%%
%%%%%%%%%%%%%%%%%%%%%%%%%%%%%%%%%
%%%%%%%%%%%%%%%%%%%%%%%%%%%%%%%%%

\section{Polar decompositions}\label{sec:polardecompositions}

The purpose of this section is to give an explicit expression for the polar decompositions of $J$ and $J^\ast$ as bounded operators on $\ell^2(\N;\C^{L+1})$. We connect the polar decomposition to the Stieltjes transform of the measure. 
Recall, see e.g. \cite[Thm.~7.20, Exercise 7.26]{Weid}, that 
$J= U |J|$, where $|J|=\sqrt{J^\ast J}$, and 
$J^\ast = U^\ast |J^\ast|$, where $|J^\ast|=\sqrt{JJ^\ast}$. Here 
$U$ is the partial isometry with initial projection onto
$\overline{\Ran (|J|)}$ and 
final projection onto $\overline{\Ran J}$ defined by 
$U \colon |J|F \mapsto JF$ for $F\in \ell^2(\N;\C^{L+1})$. 
Since $|J|^2= J^\ast J$ and $|J^\ast|^2= JJ^\ast$, we have the 
spectral decomposition for $|J|$ and $|J^\ast|$ as multiplication operators by $\sqrt{x}$ on $L^2(\nu;\C^{L+1})$ and on $L^2(\mu;\C^{L+1})$. 
Recall that $\supp(\nu)$ and $\supp(\mu)$ are contained in $[0,\infty)$,
since $J^\ast J$ and $JJ^\ast$ are positive bounded operators, 
so that multiplication by $\sqrt{x}$ is well-defined. 

Explicitly, see Section \ref{ssec:differenceeqtnmatrixOP}, we have $\cF \colon \ell^2(\N) \to L^2(\nu;\C^{L+1})$, respectively 
$\cG \colon \ell^2(\N) \to L^2(\mu;\C^{L+1})$, as unitary maps so that 
$M \cF = J^\ast J \cF$ and $M \cG = JJ^\ast \cG$, where $M$ denotes the 
multiplication operator by $x$ on $L^2(\nu;\C^{L+1})$, respectively 
$L^2(\mu;\C^{L+1})$. Next the partial isometry 
$\cU\colon L^2(\nu;\C^{L+1})  \to  L^2(\nu;\C^{L+1})$ is defined using the diagram 
\begin{center}
\begin{tikzcd}
\ell^2(\N;\C^{L+1})  \arrow[d, "\cF"] \arrow[r, "U"]
&  \ell^2(\N;\C^{L+1}) \arrow[d, "\cG"] \\
L^2(\nu;\C^{L+1})   \arrow[r, "\cU"] 
& L^2(\mu;\C^{L+1}) 
\end{tikzcd}
\end{center}

\begin{prop}\label{prop:partialisometryforJandJast} The action of 
$\cU\colon L^2(\nu;\C^{L+1})  \to  L^2(\mu;\C^{L+1})$ is given by 
\begin{equation*}
L^2(\nu;\C^{L+1}) \ni (\la\mapsto 
\sqrt{\la} \Psi_k(\la)^\ast \xi) \mapsto 
(\la \mapsto (Y_0+\Phi_1(\la)^\ast Z_1) \Psi_k(\la)^\ast \xi )
\in L^2(\mu;\C^{L+1})
\end{equation*}
and the action of its adjoint $\cU^\ast \colon L^2(\mu;\C^{L+1})  \to  
L^2(\nu;\C^{L+1})$ is given by 
\begin{equation*}
L^2(\mu;\C^{L+1}) \ni (\la\mapsto 
\sqrt{\la} \Phi_k(\la)^\ast \xi) \mapsto 
(\la \mapsto (Y_0^\ast + \Psi_1(\la)^\ast X_0^\ast) \Phi_k(\la)^\ast \xi) 
\in L^2(\nu;\C^{L+1})
\end{equation*}
for $k\in \N$ and $\xi \in \C^{L+1}$. 
\end{prop}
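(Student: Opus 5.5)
The approach is to compute $\cU = \cG U \cF^{-1}$ directly on the spanning set of functions $\la\mapsto \sqrt{\la}\,\Psi_k(\la)^\ast\xi$. These span a dense subspace of $\overline{\Ran(\cF|J|)}$, which is the initial space of $\cU$. By Section \ref{ssec:differenceeqtnmatrixOP}, $\cF F_{k,\xi}(\la) = \Psi_k(\la)^\ast\xi$ and $\cG F_{k,\xi}(\la)=\Phi_k(\la)^\ast\xi$, where $F_{k,\xi}(n)=\de_{k,n}\xi$. Since $|J|$ is intertwined by $\cF$ with multiplication by $\sqrt{x}$ on $L^2(\nu;\C^{L+1})$, the function $\la\mapsto\sqrt{\la}\Psi_k(\la)^\ast\xi$ equals $\cF(|J|F_{k,\xi})$. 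Hence, by the definition $U\colon |J|F\mapsto JF$, its image under $\cU$ is $\cG(JF_{k,\xi})$. The proof then reduces to computing $\cG J F_{k,\xi}$.

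The key steps are as follows.

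\textbf{Step 1.} Compute $JF_{k,\xi}$. By Corollary \ref{cor:lem:fromTonFNtoTonFNCN},
\[
JF_{k,\xi} = F_{k-1,X_{k-1}\xi} + F_{k,Y_k\xi} + F_{k+1,Z_{k+1}\xi}.
\]
Applying $\cG$ gives
\[
\la\mapsto \bigl(\Phi_{k-1}(\la)^\ast X_{k-1} + \Phi_k(\la)^\ast Y_k + \Phi_{k+1}(\la)^\ast Z_{k+1}\bigr)\xi .
\]

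\textbf{Step 2.} Identify this with $(Y_0+\Phi_1(\la)^\ast Z_1)\Psi_k(\la)^\ast\xi$. Take the adjoint (for real $\la$) of the first identity in Corollary \ref{cor:lem:JonFeigenfunctionJastJandback}. Written out via Corollary \ref{cor:lem:fromTonFNtoTonFNCN2}, that identity reads
\[
Z_{k+1}^\ast\Phi_{k+1}(z)+Y_k^\ast\Phi_k(z)+X_{k-1}^\ast\Phi_{k-1}(z) = \Psi_k(z)\bigl(Z_1^\ast\Phi_1(z)+Y_0^\ast\bigr).
\]
Taking adjoints at $z=\la\in\R$, the left side becomes exactly the matrix found in Step 1. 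The right side becomes $(\Phi_1(\la)^\ast Z_1+Y_0)\Psi_k(\la)^\ast$. This yields the first formula.

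\textbf{Step 3.} The formula for $\cU^\ast$ follows symmetrically. Since $J^\ast = U^\ast|J^\ast|$, the adjoint satisfies $U^\ast\colon |J^\ast|F\mapsto J^\ast F$. Repeat Steps 1--2 with $\cG$ and $\cF$ interchanged. In Step 1 use Corollary \ref{cor:lem:fromTonFNtoTonFNCN2} for $J^\ast F_{k,\xi}$, which gives
\[
F_{k-1,Z_k^\ast\xi} + F_{k,Y_k^\ast\xi} + F_{k+1,X_k^\ast\xi}.
\]
In Step 2 use the second identity of Corollary \ref{cor:lem:JonFeigenfunctionJastJandback}. Its adjoint gives the factor $Y_0^\ast+\Psi_1(\la)^\ast X_0^\ast$.

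The main obstacle is the identification $U^\ast |J^\ast| = J^\ast$ with $U^\ast$ mapping $|J^\ast|F\mapsto J^\ast F$. This is the standard fact from the cited reference \cite{Weid}. A second delicate point is that passing from $P(z)$ to $P(\la)^\ast$ requires real $\la$, which holds on $\supp(\mu),\supp(\nu)\subset[0,\infty)$. One must also check that the indices in Corollary \ref{cor:lem:JonFeigenfunctionJastJandback} match the difference operator for $k=0$; this works with the conventions $X_{-1}=0$ and $\Phi_{-1}=0$. Finally, both sides are bounded, and the functions $\sqrt{\la}\Psi_k^\ast\xi$ span a dense subset of the initial space while $\cU$ vanishes on its orthocomplement. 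So the stated action determines $\cU$ completely.
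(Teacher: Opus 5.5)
Your proposal is correct and follows the paper's proof essentially step for step. You write $\sqrt{\la}\,\Psi_k(\la)^\ast\xi=\cF|J|F_{k,\xi}$, so that $\cU$ sends it to $\cG JF_{k,\xi}$, and you compute $JF_{k,\xi}$ from Corollary~\ref{cor:lem:fromTonFNtoTonFNCN}; you then identify the result with the adjoint, at real $\la$, of $(J^\ast\Phi_\bullet(\la))(k)=\Psi_k(\la)\bigl(Z_1^\ast\Phi_1(\la)+Y_0^\ast\bigr)$ from Corollary~\ref{cor:lem:JonFeigenfunctionJastJandback}, and you treat the adjoint case via $J^\ast=U^\ast|J^\ast|$ exactly as the paper does, with your remarks on density and realness of $\la$ making explicit what the paper leaves implicit.
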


\begin{proof} Since 
$\cG J \cF^\ast  = \cG U |J| \cF^\ast 
= \cG U  \cF^\ast M_{\sqrt{\cdot}} 
= \cU M_{\sqrt{\cdot}}$, and recalling, see Section \ref{ssec:differenceeqtnmatrixOP}, that 
$\cF^\ast \Psi_k(\cdot)^\ast \xi = F_{k,\xi}$, we see that 
we need $J F_{k,\xi}$. A straightforward calculation, see 
Section \ref{ssec:differenceeqtnmatrixOP},  gives 
\begin{equation*}
JF_{k,\xi} = F_{k-1, X_{k-1}\xi} + F_{k, Y_k\xi} + 
F_{k+1, Z_{k+1}\xi} 
\end{equation*}
and applying $\cG$ to the right hand side we get, again using Section 
\ref{ssec:differenceeqtnmatrixOP}, as an element of $L^2(\mu;\C^{L+1})$, 
\begin{align*}
\Phi_{k-1}(\cdot)^\ast X_{k-1}\xi + \Phi_k(\cdot)^\ast Y_k\xi + 
\Phi_{k+1}(\cdot)^\ast Z_{k+1}\xi  = 
\Bigl( X_{k-1}^\ast \Phi_{k-1}(\cdot) + Y_k^\ast \Phi_k(\cdot) + 
Z_{k+1}^\ast \Phi_{k+1}(\cdot)\Bigr)^\ast \xi 
\end{align*}
and the term in parentheses is $\bigl( J^\ast \Phi_\bullet(\cdot)\bigr)(k) 
= \Psi_k(\cdot)\bigl( Y_0^\ast + Z_1^\ast \Phi_1(\cdot)\bigr)$ 
by Corollary \ref{cor:lem:JonFeigenfunctionJastJandback}. 
Taking the adjoint, and letting the expression act on $\xi\in \C^{L+1}$ we get the first statement. 

Since the polar decomposition for $J^\ast = U^\ast |J^\ast|$, we can 
proceed as above switching from $J$ to $J^\ast$ to obtain the second statement. 
\end{proof}

Proposition \ref{prop:partialisometryforJandJast} gives the possibility to 
connect the orthogonality matrix measures $\mu$ and $\nu$ for the 
matrix polynomial families  $(\Phi_n)_{n\in \N}$ and $(\Psi_n)_{n\in \N}$. 

\begin{thm}\label{thm:weightsfrompartialisometryforJandJast}
Let $f, g$ be $\C^{L+1}$-valued polynomials,  then  
\begin{multline*}
\int_0^\infty g^\ast(\la)\,   W^{\nu}(\la) \, f(\la)\, \la\, 
d\nu_\Tr(\la) \\ =
\int_0^\infty g^\ast(\la)\, 
\bigl( Y_0^\ast + Z_1^\ast \Phi_1(\la)\bigr) W^{\mu}(\la) 
\bigl( Y_0 +  \Phi_1(\la)^\ast Z_1\bigr) \, f(\la)\, 
d\mu_\Tr(\la) 
\end{multline*}
and 
\begin{multline*}
\int_0^\infty g^\ast(\la)\,   W^{\mu}(\la) \, f(\la)\, \la\, 
d\mu_\Tr(\la) \\ =
\int_0^\infty g^\ast(\la)\, 
\bigl( Y_0 + X_0 \Psi_1(\la)\bigr) W^{\nu}(\la) 
\bigl( Y_0^\ast +  \Psi_1(\la)^\ast X_0^\ast \bigr) \, f(\la)\, 
d\nu_\Tr(\la).
\end{multline*}
\end{thm}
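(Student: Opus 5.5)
The plan is to use the partial isometry $\cU$ of Proposition \ref{prop:partialisometryforJandJast} together with the identity $J^\ast J = |J|^2$. For $\C^{L+1}$-valued polynomials $f,g$ we write $f(\la)=\sum_k \Psi_k(\la)^\ast \xi_k$ and $g(\la)=\sum_k \Psi_k(\la)^\ast \eta_k$ as finite sums. This is possible since the $\Psi_k^\ast$ have invertible leading coefficients and $\deg \Psi_k = k$. Then $f=\cF F$ and $g=\cF G$ with $F=\sum_k F_{k,\xi_k}$, $G=\sum_k F_{k,\eta_k}$ finitely supported in $\ell^2(\N;\C^{L+1})$.

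The left hand side of the first identity is $\langle M f, g\rangle_{L^2(\nu)} = \langle J^\ast J F, G\rangle = \langle JF, JG\rangle_{\ell^2}$, because $\cF$ is unitary and intertwines $J^\ast J$ with multiplication by $\la$. Next we compute $\cG JF$. The proof of Proposition \ref{prop:partialisometryforJandJast} already shows that
\[
\cG J F_{k,\xi} = \bigl(\Psi_k(\cdot)(Y_0^\ast + Z_1^\ast\Phi_1(\cdot))\bigr)^\ast \xi = \bigl(Y_0+\Phi_1(\cdot)^\ast Z_1\bigr)\Psi_k(\cdot)^\ast\xi .
\]
This uses $JF_{k,\xi}=F_{k-1,X_{k-1}\xi}+F_{k,Y_k\xi}+F_{k+1,Z_{k+1}\xi}$ and Corollary \ref{cor:lem:JonFeigenfunctionJastJandback}. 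Extending by linearity gives $\cG JF = (Y_0+\Phi_1(\cdot)^\ast Z_1) f$, and likewise for $G$. Since $\cG$ is unitary, we get $\langle JF,JG\rangle = \langle \cG JF,\cG JG\rangle_{L^2(\mu)}$. Writing out this inner product as $\int g^\ast (Y_0+\Phi_1^\ast Z_1)^\ast W^\mu (Y_0+\Phi_1^\ast Z_1) f\, d\mu_\Tr$ gives the first formula, since $(Y_0+\Phi_1(\la)^\ast Z_1)^\ast = Y_0^\ast + Z_1^\ast\Phi_1(\la)$ for real $\la$.

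The second identity follows in exactly the same way with the roles of $J$ and $J^\ast$ interchanged. We expand $f,g$ in the $\Phi_k(\la)^\ast\xi$, use $JJ^\ast$ and $\langle J^\ast F, J^\ast G\rangle$, and apply $J^\ast F_{k,\xi} = F_{k-1,Z_k^\ast\xi}+F_{k,Y_k^\ast\xi}+F_{k+1,X_k^\ast\xi}$ from Corollary \ref{cor:lem:fromTonFNtoTonFNCN2}. Corollary \ref{cor:lem:JonFeigenfunctionJastJandback} then gives $\cF J^\ast F_{k,\xi} = (Y_0^\ast+\Psi_1^\ast X_0^\ast)\Phi_k^\ast\xi$.

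The main point needing care is the adjoint bookkeeping in the step $\cG J F_{k,\xi}$. Here $\cG F_{m,\eta}=\Phi_m(\cdot)^\ast\eta$, and we must check that the matrix version of $(J\Psi_\bullet)$ versus $(J^\ast\Phi_\bullet)$ is the correct one from Corollary \ref{cor:lem:JonFeigenfunctionJastJandback}. This holds because $\Phi_{k-1}^\ast X_{k-1}+\Phi_k^\ast Y_k+\Phi_{k+1}^\ast Z_{k+1} = \bigl((J^\ast\Phi_\bullet)(k)\bigr)^\ast$. We also use that $\la$ is real on the supports, so that $\Phi_1(\la)^\ast$ is the conjugate-transpose polynomial evaluated at $\la$. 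Note that this argument avoids $\sqrt{\la}$ entirely, so it does not depend on the description of $\cU$ on the kernel of $|J|$.
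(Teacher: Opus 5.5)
Your argument is correct, and it takes a more direct route than the paper. The paper writes the left-hand side as $\langle \sqrt{\cdot}\,\Psi_k(\cdot)^\ast\xi, \sqrt{\cdot}\,\Psi_l(\cdot)^\ast\eta\rangle_{L^2(\nu;\C^{L+1})}$ and inserts $\cU^\ast\cU$. It first assumes $\Ker J=\{0\}$, so that $\cU^\ast\cU$ is the identity. It then treats a nontrivial kernel separately, by observing that $\sqrt{\cdot}\,\Psi_k(\cdot)^\ast\xi$ lies in the initial space of $\cU$. Finally it applies Proposition \ref{prop:partialisometryforJandJast}. You never use the partial isometry: you go straight from $\langle J^\ast JF,G\rangle=\langle JF,JG\rangle$ to the computation of $\cG JF_{k,\xi}$ that sits inside the proof of that proposition.

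Since $\cU M_{\sqrt{\cdot}}\cF=\cG J$, the two computations coincide underneath. Your version removes the case distinction on $\Ker J$ and needs only that $\cF$ and $\cG$ are isometric and intertwining on finitely supported vectors. The paper's version has the merit of presenting the theorem as a direct consequence of the polar decomposition, which is the theme of that section.
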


Note that we can take e.g. $f(\la) = \la f_1(\la)$ in the 
integrals on the left hand side, so that there is a factor $\la^2$. Then we can next apply the other integral equation as well, and we obtain the same integral because of the last equality of Corollary \ref{cor:lem:JonFeigenfunctionJastJandback}. So the two 
results of Theorem \ref{thm:weightsfrompartialisometryforJandJast} are compatible. 

\begin{proof} First assume that $\Ker |J| =\Ker J$ is trivial, so that 
$U^\ast U =\Id_{\ell^2(\N;\C^{L+1})}$, and consequently 
$\cU^\ast \cU = \Id_{L^2(\nu;\C^{L+1})}$. Then we get for $k,l\in \N$ and 
$\xi,\eta\in \C^{L+1}$ that 
\begin{align*}
&\,  \int_0^\infty \eta^\ast \Psi_l(\la)\,   W^{\nu}(\la) \, \Psi_k(\la)^\ast \xi \, \la\, 
d\nu_\Tr(\la) \\ = &\,  
\langle \sqrt{\cdot} \Psi_k(\cdot)^\ast \xi ,
\sqrt{\cdot} \Psi_l(\cdot)^\ast \eta \rangle_{L^2(\nu;\C^{L+1})}
\\  = &\, \langle \cU^\ast \cU \sqrt{\cdot} \Psi_k(\cdot)^\ast \xi ,
\sqrt{\cdot} \Psi_l(\cdot)^\ast \eta \rangle_{L^2(\nu;\C^{L+1})} \\ =&\, 
\langle \cU \sqrt{\cdot} \Psi_k(\cdot)^\ast \xi ,
\cU \sqrt{\cdot} \Psi_l(\cdot)^\ast \eta \rangle_{L^2(\mu;\C^{L+1})} \\
= &\, 
\langle (Y_0+\Phi_1(\cdot)^\ast Z_1) \Psi_k(\cdot)^\ast \xi,
(Y_0+\Phi_1(\cdot)^\ast Z_1) \Psi_l(\cdot)^\ast \eta \rangle_{L^2(\mu;\C^{L+1})} \\
=&\, 
\int_0^\infty \eta^\ast \Psi_l(\la) \,  
\bigl( Y_0^\ast + Z_1^\ast \Phi_1(\la)\bigr) W^{\mu}(\la) 
\bigl( Y_0 +  \Phi_1(\la)^\ast Z_1\bigr) \, \Psi_k(\la)^\ast \xi \, 
d\mu_\Tr(\la) \ ,
\end{align*}
using the first statement of Proposition \ref{prop:partialisometryforJandJast}. This proves the statement for 
$g=\Psi_l(\cdot)^\ast \eta$ and $f=\Psi_k(\cdot)\xi$. 
Since all vector polynomials are linear combinations of such polynomials, the result follows.  
In case $\Ker|J|= \Ker J$ is not trivial, we have that 
$\nu(\{0\})\in M_{L+1}(\C)$ is a non-trivial non-negative definite matrix. Then $\sqrt{\cdot} \Psi_k(\cdot)^\ast \xi \in \cF (\Ker |J|^\perp)$ because of the square root, and the above calculation remains valid. 

The second statement follows analogously using the second statement of Proposition \ref{prop:partialisometryforJandJast}. 
\end{proof}

Recall that the Stieltjes transform of a matrix measure, as defined in Section \ref{ssec:differenceeqtnmatrixOP}, is a 
generating function for the matrix moments of the corresponding matrix measure. We use the notation for the Stieltjes transform as in Theorem \ref{thm:relationStieltjestransforms}. 
In particular, Theorem \ref{thm:weightsfrompartialisometryforJandJast}, can be used to give a connection between the Stieltjes transform 
for the matrix measures $\mu$ and $\nu$.

\begin{cor}\label{cor:thm:weightsfrompartialisometryforJandJast}
The Stieltjes transforms $M^\mu$ and $M^\nu$ satisfy 
\begin{align*}
zM^\nu(z)+\Id = &\, 
\bigl( Y_0^\ast +Z_1^\ast \Phi_1(z)\bigr)  M^\mu(z) \bigl( 
Y_0 + \Phi_1^\ast (z)Z_1\bigr) + R(z), \\
zM^\mu(z)+\Id = &\, 
\bigl( Y_0 +X_0 \Psi_1(z)\bigr)  M^\nu(z) \bigl( 
Y_0^\ast  + \Psi_1^\ast (z)X_0^\ast \bigr) + S(z) , 
\end{align*}
where $R$ and $S$ are degree one polynomials explicitly given by
\begin{align*}
R(z) = &\, Y_0^\ast ((A_0')^{-1})^\ast Z_1 + 
Z_1^\ast (A_0')^{-1} Y_0 + Z_1^\ast \Phi_1(z) ((A_0')^{-1})^\ast Z_1, \\
S(z) = &\, Y_0((A_0'')^{-1})^\ast X_0^\ast + 
X_0(A_0'')^{-1} Y_0^\ast + 
X_0\Psi_1(z) ((A_0'')^{-1})^\ast X_0^\ast , 
\end{align*}
where $R^\ast(z) = R(z)$, $S^\ast(z) = S(z)$. 
\end{cor}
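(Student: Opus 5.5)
The plan is to apply Theorem \ref{thm:weightsfrompartialisometryforJandJast} with constant $f$ and with $g$ replaced by the Cauchy kernel. The first statement of that theorem is an identity of matrix-valued integrals for polynomial $f,g$. We read it at the level of measures: since polynomials are dense and both sides are finite, and all measures are compactly supported in $[0,\infty)$, it states
\[
\la\, d\nu(\la) = \bigl( Y_0^\ast + Z_1^\ast \Phi_1(\la)\bigr)\, d\mu(\la)\, \bigl( Y_0 + \Phi_1(\la)^\ast Z_1\bigr)
\]
as matrix measures, where $d\nu = W^\nu d\nu_\Tr$, etc. Strictly, polynomial test functions determine the (compactly supported) measures via moments, so we may integrate the bounded continuous function $\la\mapsto (\la-z)^{-1}$ for $z\in\C\setminus[0,\infty)$ against both sides. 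The left side gives $\int \frac{\la}{\la-z}\,d\nu(\la) = \nu(\R) + zM^\nu(z) = \Id + zM^\nu(z)$, using the normalisation $\Psi_0=\Id$ orthonormal, so $\nu(\R)=\Id$.

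For the right side, write $\Phi_1(\la) = \Phi_1(z) + (\la - z)D$, where $D$ is the leading coefficient of $\Phi_1$. From the recurrence \eqref{eq:defPhiPsiMOP} at $n=0$, $\la\Id = A'_0\Phi_1(\la) + B'_0$, so $D=(A_0')^{-1}$. Then
\[
\bigl( Y_0^\ast + Z_1^\ast \Phi_1(\la)\bigr) = \bigl( Y_0^\ast + Z_1^\ast \Phi_1(z)\bigr) + (\la-z) Z_1^\ast (A_0')^{-1},
\]
and similarly $\bigl( Y_0 + \Phi_1(\la)^\ast Z_1\bigr) = \bigl( Y_0 + \Phi_1^\ast(z) Z_1\bigr) + (\la-z)((A_0')^{-1})^\ast Z_1$, using $\Phi_1^\ast(z)=\Phi_1(\bar z)^\ast$ and the analytic continuation in $z$.

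Expanding the product divided by $\la-z$ gives four terms:
\begin{itemize}
\item The main term is $\bigl(Y_0^\ast+Z_1^\ast\Phi_1(z)\bigr)M^\mu(z)\bigl(Y_0+\Phi_1^\ast(z)Z_1\bigr)$.
\item The two cross terms integrate against $d\mu$ with mass $\mu(\R)=\Id$, giving $\bigl(Y_0^\ast+Z_1^\ast\Phi_1(z)\bigr)((A_0')^{-1})^\ast Z_1 + Z_1^\ast(A_0')^{-1}\bigl(Y_0+\Phi_1^\ast(z)Z_1\bigr)$.
\item The quadratic term is $(\la-z)Z_1^\ast(A_0')^{-1}d\mu\,((A_0')^{-1})^\ast Z_1$. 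It integrates to $Z_1^\ast(A_0')^{-1}\bigl(\int\la\,d\mu - z\Id\bigr)((A_0')^{-1})^\ast Z_1$.
\end{itemize}

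The main bookkeeping obstacle is to simplify these remainder terms to the stated $R(z)$. For this we use the first moment: $\int \la\, d\mu = B'_0$, by orthonormality, since $\int \la\,d\mu = \langle x\Phi_0,\Phi_0\rangle$. We also use $\Phi_1(z)=(A_0')^{-1}(z-B'_0)$. Then the quadratic term becomes $-Z_1^\ast(A_0')^{-1}(z-B'_0)((A_0')^{-1})^\ast Z_1 = -Z_1^\ast\Phi_1(z)((A_0')^{-1})^\ast Z_1$. The $\Phi_1^\ast(z)$ cross term equals $Z_1^\ast(A_0')^{-1}\Phi_1^\ast(z)Z_1 = Z_1^\ast(A_0')^{-1}(z-B'_0)((A_0')^{-1})^\ast Z_1 = Z_1^\ast\Phi_1(z)((A_0')^{-1})^\ast Z_1$, using self-adjointness of $B'_0$. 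Collecting terms, the contributions combine to
\[
Y_0^\ast((A_0')^{-1})^\ast Z_1 + Z_1^\ast(A_0')^{-1}Y_0 + Z_1^\ast\Phi_1(z)((A_0')^{-1})^\ast Z_1 = R(z).
\]
I expect a careful recount of signs here, since the quadratic and cross terms partly cancel. Self-adjointness $R^\ast=R$ is then visible from the expression together with $\Phi_1(z)((A_0')^{-1})^\ast = (A_0')^{-1}(z-B_0')((A_0')^{-1})^\ast$.

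The second identity follows verbatim from the second statement of Theorem \ref{thm:weightsfrompartialisometryforJandJast}, with $\Psi_1(\la)=(A_0'')^{-1}(\la-B''_0)$, $\int\la\,d\nu=B''_0$, and $X_0,Y_0$ in place of $Z_1^\ast,Y_0^\ast$. Both identities then extend from $z\in\C\setminus[0,\infty)$ to all of $\C\setminus\R$, where they are stated, by analyticity.
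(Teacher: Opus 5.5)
Your proof is correct and produces exactly the stated $R(z)$ and $S(z)$. Your sign bookkeeping is right: each of the two cross terms contributes one copy of $Z_1^\ast\Phi_1(z)((A_0')^{-1})^\ast Z_1$, and the quadratic term, via $\int\la\,d\mu=B_0'$, removes one copy.

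Both you and the paper rest on Theorem~\ref{thm:weightsfrompartialisometryforJandJast}, $\mu(\R)=\nu(\R)=\Id$, the first moment $B_0'$ and $\Phi_1(\la)=(A_0')^{-1}(\la-B_0')$. The difference is in how you pass to Stieltjes transforms.

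The paper specialises to $f(\la)=\la^k v_j$, $g=v_i$ to get a recursion between the moments $S^\nu_{k+1}$ and $S^\mu_k,S^\mu_{k+1},S^\mu_{k+2}$. It writes $Y_0^\ast+Z_1^\ast\Phi_1(\la)=\Xi_0+\la\Xi_1$, an expansion about $\la=0$. It then sums the recursion as a series in $1/z$ for $|z|\gg0$ and regroups.

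You instead use moment determinacy of compactly supported measures to upgrade the theorem to the measure identity $\la\,d\nu(\la)=\bigl(Y_0^\ast+Z_1^\ast\Phi_1(\la)\bigr)\,d\mu(\la)\,\bigl(Y_0+\Phi_1(\la)^\ast Z_1\bigr)$. You then integrate the Cauchy kernel directly, expanding about $\la=z$.

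What your route buys:
\begin{itemize}
\item The identity holds at once for every $z$ off the supports, with no interchange of summation and integration and no continuation from a neighbourhood of infinity.
\item The remainder $R(z)$ drops out with less regrouping.
\item The measure identity gives Corollary~\ref{cor:thm:COR2weightsfrompartialisometryforJandJast} immediately by integrating over $[a,b]$, including the endpoint masses, without going through Stieltjes--Perron.
\end{itemize}
The paper's route stays entirely at the level of moments and formal series, so it avoids the Weierstrass/Riesz density step, at the price of first working for large $|z|$.

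One small slip: your closing sentence about extending from $\C\setminus[0,\infty)$ to $\C\setminus\R$ by analyticity is unnecessary. $\C\setminus\R$ is already contained in $\C\setminus[0,\infty)$, and $(\la-z)^{-1}$ is bounded and continuous on the supports for every nonreal $z$.
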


Note that multiplying the first identity by $z$ and using the second identity and the last result of Corollary \ref{cor:lem:JonFeigenfunctionJastJandback} shows that 
\begin{equation}
(X_0\Psi_1(z) + Y_0)(R(z)-z\Id) = (zS(z) - z\Id ) (Y_0+\Phi_1^\ast(z)Z_1) 
\end{equation}
is the required compatibility condition, where we use Corollary 
\ref{cor:lem:JonFeigenfunctionJastJandback} once more. 

\begin{proof} The proof is an application of the results in 
Theorem \ref{thm:weightsfrompartialisometryforJandJast}. Observe 
that by taking $f(\la) = \la^k v_j$, $g(\la) = v_i$, $0\leq i,j\leq L$, $k\in \N$, we get the $(i,j)$-th matrix entry of the integral on the left hand side  
is the $(i,j)$-th matrix entry of the integral on the right hand side. So we get, as an identity for matrix-valued integrals 
\begin{align*}
\int_0^\infty  W^{\nu}(\la) \, \la^{k+1}\, 
d\nu_\Tr(\la) =
\int_0^\infty  
\bigl( Y_0^\ast + Z_1^\ast \Phi_1(\la)\bigr) W^{\mu}(\la) 
\bigl( Y_0 +  \Phi_1(\la)^\ast Z_1\bigr) \, \la^k\, 
d\mu_\Tr(\la) 
\end{align*}
as an expression for the $k+1$-th matrix moment of the matrix measure $\mu$ in terms of the matrix moments for the matrix measure $\nu$.
Writing $ Y_0^\ast + Z_1^\ast \Phi_1(\la) = \Xi_0 + \la \Xi_1$ 
for the moment and $S^\nu_k = \int_0^\infty  W^{\nu}(\la) \, \la^{k}\, 
d\nu_\Tr(\la)$ for the moments, and similarly for $S^\mu_k$, we get 
\begin{align*}
S^\nu_{k+1} = \Xi_0 S^\mu_k \Xi_0^\ast + 
\Xi_0 S^\mu_{k+1} \Xi_1^\ast + \Xi_1 S^\mu_{k+1} \Xi_0^\ast 
+ \Xi_1 S^\mu_{k+2} \Xi_1^\ast.
\end{align*}
Dividing by $z^{k+1}$, summing over $k\in \N$ and interchanging 
summation and integration, which is valid for $|z|\gg 0$, and relabelling, we get 
\begin{align*}
\sum_{k=1}^\infty \frac{S^\nu_k}{z^k} = 
\Xi_0 \Bigl( \frac{1}{z} \sum_{k=0}^\infty \frac{S^\mu_k}{z^k} 
\Bigr) \Xi_0^\ast  
+ \Xi_0 \Bigl( \sum_{k=1}^\infty \frac{S^\mu_k}{z^k}  \Bigr) \Xi_1^\ast
+ \Xi_1 \Bigl( \sum_{k=1}^\infty \frac{S^\mu_k}{z^k}  \Bigr) \Xi_0^\ast
+ \Xi_1 \Bigl( z \sum_{k=2}^\infty \frac{S^\mu_k}{z^k}  \Bigr) \Xi_1^\ast. 
\end{align*}
Recall $M^\nu(z) = \frac{-1}{z} \sum_{k=0}^\infty \frac{S^\nu_k}{z^k}$ for $|z|\gg 0$, we can rewrite this in terms of the Stieltjes transforms taking into account $S_0^\mu=\Id= S^\nu_0$. 
This gives $zM^\nu(z)+\Id$
\begin{align*}
 &\, =
\Xi_0 M^\mu(z) \Xi_0^\ast  
+ \Xi_0 \Bigl( zM^\mu(z) + \Id  \Bigr) \Xi_1^\ast
+ \Xi_1 \Bigl( zM^\mu(z) + \Id \Bigr) \Xi_0^\ast
+ \Xi_1 \Bigl( z^2 M^\mu(z) + z\Id + S^\mu_1  \Bigr) \Xi_1^\ast\\ 
&\, =
\bigl( \Xi_0 +\Xi_1 z\bigr)  M^\mu(z) \bigl( \Xi_0^\ast + \Xi_1^\ast z\bigr)   
+ \Xi_0 \Xi_1^\ast
+ \Xi_1 \Xi_0^\ast
+ \Xi_1 \bigl( z\Id + S^\mu_1  \bigr) \Xi_1^\ast.
\end{align*}
Note that 
$\Phi_1(\la) = (A_0')^{-1}(\la-B_0')$ so that 
$\Xi_0= Y_0^\ast - Z_1^\ast (A_0')^{-1} B_0'$ and 
$\Xi_1= Z_1^\ast (A_0')^{-1}$. Moreover, $S_1^\mu= B_0'$ 
since $\int_0^\infty \Phi_1(\la) W^\mu(\la) \, d\mu_\Tr(\la) =0$. 
Plugging this in, and simplifying gives the first result. 

The second result is proved analogously. 
\end{proof}

The Stieltjes-Perron inversion formula then shows that the relation of the Stieltjes transforms in 
Corollary \ref{cor:thm:weightsfrompartialisometryforJandJast}
gives the following result. 

\begin{cor}\label{cor:thm:COR2weightsfrompartialisometryforJandJast}
For $a<b$ we have 
\begin{multline*}
 \int_a^b  W^{\nu}(\la) \, \la\, 
d\nu_\Tr(\la)  + \frac12\bigl( a\, \nu(\{a\}) + b\, \nu(\{b\})\bigr)  
 \\
= \int_a^b   
\bigl( Y_0^\ast + Z_1^\ast \Phi_1(\la)\bigr) W^{\mu}(\la) 
\bigl( Y_0 +  \Phi_1(\la)^\ast Z_1\bigr) \, 
d\mu_\Tr(\la) 
 \\ 
+ \frac12\Bigl( 
\bigl( Y_0^\ast + Z_1^\ast \Phi_1(a)\bigr) \mu(\{a\}) 
\bigl( Y_0 +  \Phi_1(a)^\ast Z_1\bigr) 
+
\bigl( Y_0^\ast + Z_1^\ast \Phi_1(b)\bigr) \mu(\{b\}) 
\bigl( Y_0 +  \Phi_1(b)^\ast Z_1\bigr) \Bigr)
\end{multline*}
and 
\begin{multline*}
\int_a^b W^{\mu}(\la) \, \la\, 
d\mu_\Tr(\la) 
+ \frac12\bigl( a\, \mu(\{a\}) + b\, \mu(\{b\})\bigr)  
\\ =
\int_a^b  
\bigl( Y_0 + X_0 \Psi_1(\la)\bigr) W^{\nu}(\la) 
\bigl( Y_0^\ast +  \Psi_1(\la)^\ast X_0^\ast \bigr)  
d\nu_\Tr(\la)
 \\ 
+\frac12\Bigl( 
\bigl( Y_0 + X_0 \Psi_1(a)\bigr) \nu(\{a\}) 
\bigl( Y_0^\ast +  \Psi_1(a)^\ast X_0^\ast \bigr) 
+ 
\bigl( Y_0 + X_0 \Psi_1(b)\bigr) \nu(\{b\}) 
\bigl( Y_0^\ast +  \Psi_1(b)^\ast X_0^\ast \bigr) 
\Bigr).
\end{multline*}
\end{cor}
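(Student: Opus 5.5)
The plan is to apply the Stieltjes--Perron inversion formula for matrix measures to both sides of the first identity in Corollary \ref{cor:thm:weightsfrompartialisometryforJandJast}, and then argue the second identity the same way. The inversion formula for a matrix measure $\si$ with Stieltjes transform $M^\si(z)=\int_\R \frac{1}{x-z}\,d\si(x)$ reads
\[
\lim_{\varepsilon\downarrow 0}\frac{1}{2\pi i}\int_a^b \bigl(M^\si(x+i\varepsilon)-M^\si(x-i\varepsilon)\bigr)\,dx
= \si((a,b)) + \tfrac12\bigl(\si(\{a\})+\si(\{b\})\bigr).
\]
It holds entrywise, since each entry $\si_{i,j}$ is a complex Borel measure and its Stieltjes transform is the $(i,j)$-entry of $M^\si$. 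The polynomial $R(z)$ is entire, so its contribution $R(x+i\varepsilon)-R(x-i\varepsilon)$ tends to zero uniformly on $[a,b]$ and drops out. The constant $\Id$ on the left drops out for the same reason.

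\textbf{Step 1: the left-hand side.} Rewrite $zM^\nu(z)+\Id=\int \frac{x}{x-z}\,d\nu(x)$. This is valid because $\int d\nu=S_0^\nu=\Id$ and $\frac{z}{x-z}+1=\frac{x}{x-z}$. Hence, up to the constant, it is the Stieltjes transform of the matrix measure $x\,d\nu(x)=x\,W^\nu(x)\,d\nu_\Tr(x)$. This measure is positive semidefinite because $\supp\nu\subset[0,\infty)$. Applying the inversion formula gives $\int_a^b W^\nu(\la)\,\la\,d\nu_\Tr(\la)+\tfrac12\bigl(a\,\nu(\{a\})+b\,\nu(\{b\})\bigr)$, where the integral over the open interval differs from the one over $[a,b]$ only by the endpoint atoms, which carry the weight $\la$.

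\textbf{Step 2: the right-hand side.} Write $E(z)=Y_0^\ast+Z_1^\ast\Phi_1(z)$. This is a degree one polynomial, and since $\Phi_1^\ast(z)=(\Phi_1(\overline z))^\ast$ the right factor is $E^\ast(z)=(E(\overline z))^\ast$. I expand $E(z)=E(x)+(z-x)\Xi_1$ and $E^\ast(z)=E^\ast(x)+(z-x)\Xi_1^\ast$ inside the integral defining $M^\mu$. This yields
\[
E(z)M^\mu(z)E^\ast(z)=\int \frac{E(x)\,d\mu(x)\,E(x)^\ast}{x-z} + Q(z),
\]
where $Q(z)$ is a polynomial in $z$ whose matrix coefficients are moments of $\mu$. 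The identity is exactly the algebra already used in the proof of Corollary \ref{cor:thm:weightsfrompartialisometryforJandJast}, read in the opposite direction. The polynomial $Q$ again vanishes under the inversion formula. The first term is the Stieltjes transform of the positive matrix measure $E(\la)\,W^\mu(\la)\,E(\la)^\ast\,d\mu_\Tr(\la)$. Its atoms at $a$ and $b$ are $E(a)\mu(\{a\})E(a)^\ast$ and the analogous expression at $b$, and these give precisely the stated boundary terms.

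\textbf{Step 3: matching and the main obstacle.} Equating the two inversions gives the first identity, and the second identity follows verbatim with the roles of $\mu,\nu$ and $\Phi,\Psi$ interchanged. The step needing the most care is Step 2: checking that the polynomial remainders are genuinely polynomial, so that they have no jump across the real axis. One also has to justify passing the limit entrywise for matrix measures whose entries are only complex measures. Both points reduce to the scalar Stieltjes--Perron formula for each $\mu_{i,j}$, after multiplying out the finite matrix products.

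Alternatively, one could bypass Stieltjes transforms altogether. Theorem \ref{thm:weightsfrompartialisometryforJandJast} identifies the two matrix measures via their moments, and compactly supported measures are determined by their moments, so the measures coincide and can be evaluated on $(a,b)$ together with half of the endpoint atoms.
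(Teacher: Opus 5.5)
Your proposal is correct and follows the paper's proof: you apply Stieltjes--Perron inversion entrywise to the identities of Corollary \ref{cor:thm:weightsfrompartialisometryforJandJast}, and your expansion $E(z)=E(x)+(z-x)\Xi_1$ plays the role of the paper's formula \eqref{eq:connectionStieltjestransform}, which guarantees that all remainders are polynomials and so have no jump across $\R$. Your closing alternative is also valid and bypasses Stieltjes transforms: taking $g=v_i$, $f=\la^k v_j$ in Theorem \ref{thm:weightsfrompartialisometryforJandJast} shows that the compactly supported matrix measures $\la\,d\nu(\la)$ and $E(\la)\,d\mu(\la)\,E(\la)^\ast$ have equal moments, hence coincide, which gives the stronger statement that the integrals over $(a,b)$ and the endpoint atoms agree separately.
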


Note that the result of Corollary \ref{cor:thm:COR2weightsfrompartialisometryforJandJast} is a result for matrix valued integrals. The result simplifies 
considerably if we assume that $a$ and $b$ do not correspond to 
discrete mass points of $\mu$ and $\nu$. 

\begin{proof}
We use the Stieltjes-Perron inversion formula for more general  (finite) complex Radon measures, see 
\cite[Thm.~A.13]{Schm}, and that the spectral measures for bounded operators give 
regular measures, see \cite[Def.~12.17]{Rudi} and the remarks made there. Note that the matrix entries then correspond to 
Stieltjes transforms for such a scalar complex Radon measure. For such a measure $\theta$ we have for a polynomial 
$f(x) = \sum_{p=0}^N a_px^p$ that 
\begin{equation}\label{eq:connectionStieltjestransform}
\int_\R \frac{f(x)}{x-z}\, d\theta(x) = 
f(z) \int_\R \frac{1}{x-z}\, d\theta(x) +
\sum_{r=0}^{N-1} \Bigl( \sum_{p=r+1}^N a_p m_{p-r-1}\Bigr)z^r 
\end{equation}
where $m_k = \int_\R x^k\, d\theta(x)$ are the moments. 
Using this in Corollary 
\ref{cor:thm:weightsfrompartialisometryforJandJast} for each of the matrix entries and the Stieltjes-Perron inversion formula, we obtain the result.
\end{proof}

%%%%%%%%%%%%%%%%%%%%%%%%%%%%%%%%%
%%%%%%%%%%%%%%%%%%%%%%%%%%%%%%%%%
%%%%%%%%%%%%%%%%%%%%%%%%%%%%%%%%%
%%%%%%%%%%%%%%%%%%%%%%%%%%%%%%%%%

\section{Symmetric case}\label{sec:symmetriccase}

In general, the three-term recurrence relation \eqref{eq:MOPPn} 
for the matrix orthogonal polynomials $P_n$ is not symmetric because of the occurrence of $S^{11}_n$ and $S^{22}_n$. 
In this section we assume additionally that $S^{11}_n = 0= S^{22}_n$ for all $n$. 
From the expression \eqref{eq:defS11andS22} we see that this is precisely the case if 
$b_{n,k}=0$ for $0\leq k<L$, and we call this the symmetric case.
This is in accordance with e.g. \cite{LourVA}. 
So the recurrence 
\eqref{eq:recursionscalarLorthopols} reduces to 
$zp_n(z) = a_n p_{n+1}(z) + b_{n,L} p_{n-L}(z)$ with $b_{n,L}=0$ for $L>n$ and 
$b_{n,L}\not=0$ for $n\geq L$. It follows that the matrices $X_p$, $Y_p$, $Z_p$ 
given in \eqref{eq:defXpYpZp} simplify considerably, as do the matrices $T^{21}_p$, 
$S^{12}_p$ in Lemma \ref{lem:rangekernelT}. 
The symmetric case is exactly the case where the matrix polynomials are (or are equivalent to) diagonal matrix polynomials. 

As observed, in the symmetric case the recurrence \eqref{eq:MOPPn} is symmetric, so that iterating gives 
\begin{equation*}
\begin{split}
z^2 P_{2n}(z) &\, =  S_n^{12}T^{21}_n P_{2(n+1)}(z) + 
\bigl( S_n^{12} (S_n^{12})^\ast + (T^{21}_{n-1})^\ast T^{21}_{n-1}\bigr) P_{2n}(z) 
+ (S_{n-1}^{12}T^{21}_{n-1})^\ast P_{2(n-1)}(z),\\
z^2 P_{2n+1}(z) &\, =  T^{21}_nS_{n+1}^{12} P_{2(n+1)+1}(z)
+ \bigl( T^{21}_n(T^{21}_n)^\ast +(S_n^{12})^\ast S_n^{12} \bigr) P_{2n+1}(z) 
\\ &\, \qquad \qquad\qquad \qquad
\qquad \qquad\qquad \qquad
\qquad\qquad \qquad
+ (T^{21}_{n-1}S_{n}^{12})^\ast P_{2(n-1)+1}(z).
\end{split}
\end{equation*}
Note that all the matrices involved in the three-term recurrence relation 
are diagonal. We generate diagonal matrix orthogonal 
polynomials $Q^e_n$ and $Q^o_n$, corresponding to the even and odd case respectively, satisfying
\begin{equation}\label{eq:defMOPQn}
\begin{split}
z Q^e_{n}(z) &\, =  S_n^{12}T^{21}_n Q^e_{n+1}(z) + 
\bigl( S_n^{12} (S_n^{12})^\ast + (T^{21}_{n-1})^\ast T^{21}_{n-1}\bigr) Q^e_n(z) 
+ (S_{n-1}^{12}T^{21}_{n-1})^\ast Q^e_{n-1}(z),\\
z Q^o_{n}(z) &\, =  T^{21}_nS_{n+1}^{12} Q^o_{n+1}(z)
+ \bigl( T^{21}_n(T^{21}_n)^\ast +(S_n^{12})^\ast S_n^{12} \bigr) Q^o_{n}(z) 
+ (T^{21}_{n-1}S_{n}^{12})^\ast Q^o_{n-1}(z)
\end{split}
\end{equation}
with initial conditions $Q^o_0(z)=\Id = Q^e_0(z)$, since 
$(S^{12}_{-1}T^{21}_{-1})^\ast = 0 = (T^{21}_{-1}S^{12}_0)^\ast$. These matrix polynomials are diagonal with scalar orthogonal polynomials on the diagonal. The polynomials 
are related to the even and odd degree polynomials, since  
$P_{2n}(z) = Q^e_{n}(z^2)$, which is diagonal, and 
$P_{2n+1}(z) = Q^o_{n}(z^2) P_1(z)= Q^o_{n}(z^2)(S_0^{12})^{-1} z$, which follows 
by comparing three-term recurrence relations and the initial values. Note that 
$P_{2n+1}(z)$ is not diagonal, since $S_0^{12}$ is not diagonal. 
However, we can transform to a diagonal situation by multiplying by a unitary matrix, cf. Section \ref{ssec:matrixOP}. 
Fix the permutation $\si= \begin{pmatrix}
0 & 1 & \cdots & L \\ L & 0 & \cdots & L-1\end{pmatrix}$, and let 
$U$ be the corresponding permutation $(L+1)\times (L+1)$-matrix
defined by $Uv_i = v_{\si(i)}$ or $U_{i,j}=1$ if $i=\si(j)$ and 
$U_{i,j}=0$ otherwise. Then $UU^\ast =\Id = U^\ast U$, so $U$ is unitary. 

\begin{prop}
Define the matrix polynomial family $(\widetilde{P}_n)_{n\in \N}$ by $\widetilde{P}_{2n} = P_{2n}$, 
$\widetilde{P}_{2n+1} = U P_{2n+1}$, then 
$(\widetilde{P}_{n})_{n\in \N}$ is an orthonormal family of diagonal matrix polynomials with respect to the diagonal matrix measure $\rho$.  
\end{prop}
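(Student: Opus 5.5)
The plan is to check three things in turn. First, that the $\widetilde{P}_n$ are orthonormal for $\rho$. Second, that each $\widetilde{P}_n$ is diagonal. Third, that $\rho$ is diagonal; this follows once the $\widetilde{P}_n$ are diagonal, orthonormal and satisfy a recurrence with diagonal coefficients.

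\emph{Orthonormality.} By Section \ref{ssec:matrixOP}, if $(P_n)$ is orthonormal for $\rho$ and each $U_n$ is unitary, then $(U_nP_n)$ is again orthonormal for $\rho$. Here we take $U_{2n}=\Id$ and $U_{2n+1}=U$. So it remains to prove the other two claims.

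\emph{Diagonality.} The even case is already done: $\widetilde{P}_{2n}=P_{2n}=Q^e_n(z^2)$, and $Q^e_n$ is diagonal. For the odd case we have $\widetilde{P}_{2n+1}(z)=z\,U Q^o_n(z^2)(S^{12}_0)^{-1}$. In the symmetric case $b_{n,k}=0$ for $k<L$, so the explicit form of $S^{12}_p$ from Lemma \ref{lem:rangekernelT} becomes a weighted cyclic shift. Its entries are $(S^{12}_p)_{i,i+1}=a_{p(L+1)+i}$ for $0\le i<L$, $(S^{12}_p)_{L,0}=\overline{b_{p(L+1)+L,L}}$, and all other entries vanish. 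Hence $S^{12}_0=D_0U^\ast$ or $D_0U$ for a diagonal invertible $D_0$, depending on the orientation of $\si$. I would fix the orientation by checking $Uv_i=v_{\si(i)}$ against the nonzero positions $(i,i+1)$ and $(L,0)$, so that $(S^{12}_0)^{-1}=U^{\ast}D_0^{-1}$ in the right convention.

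Then $\widetilde{P}_{2n+1}(z)=z\,UQ^o_n(z^2)U^\ast D_0^{-1}$. Conjugating a diagonal matrix by a permutation matrix gives a diagonal matrix, so $\widetilde{P}_{2n+1}$ is diagonal. The main obstacle is exactly this bookkeeping: whether the factor is $U$ or $U^\ast$, and on which side of $S^{12}_0$ it sits. If the convention comes out reversed, I would instead use the transformation rule \eqref{eq:genMOP3termrecurrence-unitary} for the recurrence of $U_nP_n$. The new coefficients are $U_nA_nU_{n+1}^\ast$, which equal $S^{12}_nU^\ast$ at even $n$ and $UT^{21}_n$ at odd $n$. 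I would verify directly from the explicit matrices that these are diagonal, and likewise that $B_n=0$. Note that in the symmetric case $T^{21}_p$ is also a weighted cyclic shift, with entries $(0,L)$ and $(i+1,i)$, which should be the inverse pattern to that of $S^{12}_p$. Then every $\widetilde{P}_n$, generated from $\widetilde{P}_0=\Id$ by a recurrence with diagonal coefficients, is diagonal by induction. This recurrence argument is cleaner and would be my primary route.

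\emph{Diagonal measure.} Let $(e_i)$ be the standard basis. The moments $\int d\rho = \langle \Id,\Id\rangle$ and, more generally, $\int x^k\,d\rho$ are determined by the recurrence coefficients via the spectral theorem of Section \ref{ssec:differenceeqtnmatrixOP}: they equal the upper-left block of $(\Om\J\Om)^k$ in the conjugated, unitarily equivalent form. With all recurrence coefficients diagonal, each power is block-diagonal with respect to the coordinate decomposition, so every matrix moment is diagonal. For a bounded operator the measure is determined by its moments, hence each off-diagonal entry $\rho_{i,j}$, $i\neq j$, is zero. Equivalently, the operator splits as a direct sum of $L+1$ scalar Jacobi operators acting on $\ell^2(\N)\,v_i$, and $\rho$ is the diagonal matrix of their spectral measures.
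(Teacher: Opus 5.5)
Your proposal is correct. Your first diagonality argument is exactly the paper's. The paper writes $UP_{2n+1}(z)=UQ^o_n(z^2)U^{-1}\,U(S^{12}_0)^{-1}z$ and notes that both factors are diagonal.

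The orientation you hedged on comes out as you wrote it. $U$ has its ones precisely at $(i,i+1)$, $0\le i<L$, and at $(L,0)$, which is the support of $S^{12}_p$ in the symmetric case. So $S^{12}_p=D_pU$ with $D_p=\mathrm{diag}(a_{p(L+1)},\dots,a_{p(L+1)+L-1},\overline{b_{p(L+1)+L,L}})$. Similarly $T^{21}_p=U^\ast E_p$ with $E_p=\mathrm{diag}(\overline{b_{(p+1)(L+1),L}},\dots,\overline{b_{(p+1)(L+1)+L-1,L}},a_{p(L+1)+L})$.

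Your preferred recurrence route is a genuine alternative and is slightly more self-contained. The transformed coefficients are $\widetilde A_{2p}=S^{12}_pU^\ast=D_p$ and $\widetilde A_{2p+1}=UT^{21}_p=E_p$, with zero diagonal terms $U_nB_nU_n^\ast=0$. Induction from $\widetilde P_0=\Id$ then gives diagonality without the identity $P_{2n+1}(z)=Q^o_n(z^2)(S^{12}_0)^{-1}z$, which the paper obtains by comparing recurrences. This route also exhibits explicitly the $L+1$ scalar Jacobi matrices sitting on the diagonal.

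For the diagonality of $\rho$, the paper gives only a one-line assertion, and your moment argument supplies the missing justification. Make two points explicit. First, the upper-left block of the powers is unchanged by the conjugation with $\bigoplus_n U_n$, because $U_0=\Id$. Second, uniqueness of the measure from its moments uses the compact support of $\rho$.

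An even shorter justification is available. Take $m=0$ in the orthonormality relations: this gives $\int(\widetilde P_n)_{ii}\,d\rho_{i,j}=0$ for all $n$ whenever $i\neq j$. Since $(\widetilde P_n)_{ii}$ has exact degree $n$, every moment of $\rho_{i,j}$ vanishes, so $\rho_{i,j}=0$.
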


\begin{proof} Since $U$ is unitary, the family 
$(\widetilde{P_n})_{n\in \N}$ is orthonormal, see Section \ref{ssec:matrixOP}, with respect to the same measure. Now 
$UP_{2n+1}(z) = UQ^o_n(z^2)U^{-1} U (S_0^{12})^{-1} z$
is diagonal, since $UQ^o_n(z^2)U^{-1}$ is diagonal and so 
is $U (S_0^{12})^{-1}$. Since $(\widetilde{P}_n)_{n\in \N}$ 
is diagonal, so is the weight. 
% ??do we need a better argument??
\end{proof}

Note that $(UQ^o_n(z^2)U^{-1})_{\si(i),\si(i)} 
= Q^o_n(z^2)_{i,i}$ 
and $U (S_0^{12})^{-1} =\textrm{diag} (a_0^{-1}, \cdots, a_{L-1}^{-1}, b_{L,L}^{-1})$.

We make the diagonal matrices in the recurrence \eqref{eq:defMOPQn} explicit. 
For the even polynomials we have 
\begin{equation}\label{eq:matricesevenMOPQn}
\begin{split}
(S^{12}_n T^{21}_n)_{i,j} 
&\, =\de_{i,j} 
\begin{cases}
a_{n(L+1)+i} \overline{ b_{(n+1)(L+1)+i, L}}, & 0\leq i <L, \\
a_{n(L+1)+L} \overline{ b_{n(L+1)+L, L}}, & i=L,
\end{cases}
\\
\bigl( S_n^{12} (S_n^{12})^\ast + (T^{21}_{n-1})^\ast T^{21}_{n-1}\bigr)_{i,j} &\, = 
\de_{i,j} 
\begin{cases}
a_{n(L+1)+i}^2 + |b_{(n+1)(L+1)+i,L}|^2, & 0\leq i <L, \\
a_{n(L+1)+L}^2 +|b_{n(L+1)+L,L}|^2, & i=L.
\end{cases}
\end{split}
\end{equation}
For the odd polynomials we have 
\begin{equation}\label{eq:matricesoddMOPQn}
\begin{split}
(T^{21}_n S^{12}_{n+1} )_{i,j} 
&\, =\de_{i,j} 
\begin{cases}
a_{n(L+1)+L}\overline{b_{(n+1)(L+1)+L,L}}, & i=0, \\
a_{(n+1)(L+1)+i-1}\overline{b_{(n+1)(L+1)+i-1,L}}, & 0<i\leq L,
\end{cases}
\\
\bigl( T^{21}_n(T^{21}_n)^\ast  + (S_n^{12})^\ast S_n^{12}  \bigr)_{i,j} &\, = 
\de_{i,j} 
\begin{cases}
a_{n(L+1)+L}^2 + |b_{n(L+1)+L,L}|^2, & i=0, \\
a_{n(L+1)+i-1}^2 +|b_{(n+1)(L+1)+i-1,L}|^2, & 0<i \leq L.
\end{cases}
\end{split}
\end{equation}

In the symmetric case, the matrices in the recurrence  
\eqref{eq:defPhiPsiMOP} are also diagonal matrices as can be seen by a straightforward calculation. In particular, $A'_p$ and $A''_p$ are given completely by their diagonals as in Lemma \ref{lem:ApaccentApdoubleaccentinvertible} and 
\begin{equation}\label{eq:BaccentpandBdoubleaccentpinsymmetriccase}
\begin{split}
(B'_p)_{i,j} &\, = \de_{i,j} \bigl( a^2_{p(L+1)+i} + |b_{p(L+1)+i,L}|^2\bigr) , \\
(B''_p)_{i,j} &\, = \de_{i,j} 
\begin{cases}
a^2_{(p-1)(L+1)+L} +|b_{p(L+1)+L,L}|^2, & i=0,  \\
a^2_{p(L+1)+i-1} +|b_{(p+1)(L+1)+i-1,L}|^2, & 0 <i \leq L.
\end{cases}
\end{split}
\end{equation}
This implies that the matrix polynomials $\Phi_n(z)$ and $\Psi_n(z)$ are 
completely reducible with only non-zero diagonal entries, which are 
orthogonal scalar polynomials. 

\begin{prop}\label{prop:relationQnPhinPsininsymmetriccase} 
In the symmetric case, the polynomials 
defined by \eqref{eq:defPhiPsiMOP} and \eqref{eq:defMOPQn} are related by  
\begin{align*} 
& Q^e_n(z)_{i,i}= \Phi_n(z)_{i,i}\quad \text{for}\quad 0\leq i <L,\\
& Q^o_n(z)_{i,i}= \Psi_n(z)_{i,i} \quad \text{for}\quad 0<  i \leq L,\\  
& \Psi_n(z)_{0,0} = Q_n^e(z)_{L,L}\quad \text{and}\quad \Phi_n(z)_{L,L} = Q_n^o(z)_{0,0}.
\end{align*}
\end{prop}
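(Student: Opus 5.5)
In the symmetric case all four families $Q^e_n$, $Q^o_n$, $\Phi_n$, $\Psi_n$ are diagonal and start from $\Id$. Each diagonal entry is therefore a scalar polynomial sequence fixed by a scalar three-term recurrence with initial values $1$ at $n=0$ and $0$ at $n=-1$. The proposition then follows once the relevant diagonal recurrence coefficients agree, and I would prove it entrywise this way. Diagonality of the recurrence matrices for $Q^e_n$, $Q^o_n$ is \eqref{eq:matricesevenMOPQn}, \eqref{eq:matricesoddMOPQn}. For $\Phi_n$, $\Psi_n$ it is Lemma \ref{lem:ApaccentApdoubleaccentinvertible} together with the observation before \eqref{eq:BaccentpandBdoubleaccentpinsymmetriccase}. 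Since $A'_p$ and $A''_p$ are lower triangular, I would first check that their off-diagonal entries vanish when $b_{n,k}=0$ for $k<L$. This is a direct computation with $X_p$, $Y_p$, $Z_p$ from \eqref{eq:defXpYpZp}: $Y_p$ then has only the superdiagonal $a$'s and the corner entry $b_{p(L+1)+L,L}$, and $Z_p$ only its superdiagonal.

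Next I compare the coefficients entry by entry, using Lemma \ref{lem:ApaccentApdoubleaccentinvertible} for the off-diagonal coefficients and \eqref{eq:BaccentpandBdoubleaccentpinsymmetriccase} for the diagonal ones.

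For $0\le i<L$, the $(i,i)$ entry of $S^{12}_nT^{21}_n$ is $a_{n(L+1)+i}\overline{b_{(n+1)(L+1)+i,L}}$, which is $(A'_n)_{i,i}$. The diagonal coefficient of $Q^e$ is $a^2_{n(L+1)+i}+|b_{(n+1)(L+1)+i,L}|^2$. The diagonal coefficient of $\Phi$ is $a^2_{n(L+1)+i}+|b_{n(L+1)+i,L}|^2$.

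For $i=L$ in the even family, $a_{n(L+1)+L}\overline{b_{n(L+1)+L,L}}$ is $(A''_n)_{0,0}$. Its diagonal coefficient should match $(B''_n)_{0,0}$.

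For the odd family with $0<i\le L$, the coefficient is $a_{(n+1)(L+1)+i-1}\overline{b_{(n+1)(L+1)+i-1,L}}=(A''_n)_{i,i}$.

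For the odd family with $i=0$, the coefficient is $a_{n(L+1)+L}\overline{b_{(n+1)(L+1)+L,L}}=(A'_n)_{L,L}$.

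Wherever the off-diagonal coefficients agree but the printed diagonal coefficients differ, I would recompute those diagonal entries directly. For $B'_p=X_pX_p^\ast+Y_pY_p^\ast+Z_pZ_p^\ast$, entry $i<L$ gets $a^2_{p(L+1)+i}$ from $Y_pY_p^\ast$ and $|b_{p(L+1)+i,L}|^2$ from $Z_pZ_p^\ast$. For $S^{12}_n(S^{12}_n)^\ast+(T^{21}_{n-1})^\ast T^{21}_{n-1}$, entry $i<L$ is read off from the explicit matrices in Lemma \ref{lem:rangekernelT}. I expect the two to agree after index bookkeeping, with the same $b$-index appearing in both. If the displayed formulas \eqref{eq:matricesevenMOPQn} and \eqref{eq:BaccentpandBdoubleaccentpinsymmetriccase} carry an index slip, recomputing from the definitions resolves it. The analogous recomputation handles $B''$ against the odd-family coefficient.

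A cleaner alternative, which I would use as a cross-check, is structural. In the symmetric case \eqref{eq:recurrenceOmbbJOm} has $S^{11}=S^{22}=0$, so $\Om\J\Om$ maps the first $\C^{L+1}$ block to the second and conversely. Hence $\Om\J^2\Om$ restricted to the first block is exactly the even-$P$ recurrence, i.e.\ that of $Q^e$. On the other hand, $\Om\,\mathrm{diag}(JJ^\ast,J^\ast J)\,\Om$ restricted to the first block acts on coordinates $v^1_0,\dots,v^1_{L-1}$ (from $JJ^\ast$) and $v^2_0$ (which is $v^1_L$ after $\Om$, taken from $J^\ast J$ index $0$). Reading off the diagonal entries gives $Q^e_{i,i}=\Phi_{i,i}$ for $i<L$ and $Q^e_{L,L}=\Psi_{0,0}$. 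The second block gives the odd statements, with the permutation $\si$ accounting for the shift. Formally, I would combine Proposition \ref{prop:relationPnPhinPsin} with $P_{2n}(w)=Q^e_n(w^2)$ and $P_{2n+1}(w)=Q^o_n(w^2)(S^{12}_0)^{-1}w$, taking $\xi=v_i$ and comparing components. The main obstacle is the index bookkeeping, especially in the odd case, where the factor $(S^{12}_0)^{-1}w$ and $\Om$ permute the basis vectors. I would handle it by computing $\Om\bigl(\xi,(S^{12}_0)^{-1}w\xi\bigr)^T$ explicitly in the symmetric case.
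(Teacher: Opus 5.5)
Your proposal is correct and follows the paper's proof. The paper likewise gives both the entrywise comparison of the diagonal recurrence coefficients and, as an alternative, Proposition \ref{prop:relationPnPhinPsin} combined with $P_{2n}(z)=Q^e_n(z^2)$ and $P_{2n+1}(z)=Q^o_n(z^2)(S^{12}_0)^{-1}z$. Your suspected index slip is real: recomputing from $S^{12}_n$ and $T^{21}_{n-1}$, the diagonal coefficient in \eqref{eq:matricesevenMOPQn} is $a^2_{n(L+1)+i}+|b_{n(L+1)+i,L}|^2$ for $i<L$ and $a^2_{(n-1)(L+1)+L}+|b_{n(L+1)+L,L}|^2$ for $i=L$, which are exactly $(B'_n)_{i,i}$ and $(B''_n)_{0,0}$, so your comparison goes through.
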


\begin{proof} This can be proved by comparing the three-term recurrence relations 
using \eqref{eq:matricesevenMOPQn}, \eqref{eq:matricesoddMOPQn} and 
\eqref{eq:BaccentpandBdoubleaccentpinsymmetriccase} and Lemma \ref{lem:ApaccentApdoubleaccentinvertible}. 

Alternatively, Proposition \ref{prop:relationPnPhinPsin} and 
$P_{2n}(z) = Q^e_{n}(z^2)$, 
$P_{2n+1}(z) = Q^o_{n}(z^2)(S_0^{12})^{-1} z$
relate $Q^e_n$ and $Q_n^o$ to $\Phi_n$ and $\Psi_n$. Since the matrix polynomials
$Q^e_n$, $Q_n^o$, $\Phi_n$ and $\Psi_n$ are diagonal, the result follows. 
\end{proof}

The results of Corollary \ref{cor:lem:JonFeigenfunctionJastJandback} lead to results for matrix polynomials with diagonal entries, and hence these reduce to much simpler relations. We find connection formulas for the diagonal matrix polynomials in the symmetric case.
As an example, the first result of Corollary \ref{cor:lem:JonFeigenfunctionJastJandback} only leads to non-zero identities for 
the $(i+1,i)$-th entry for $0\leq i <L$ and the $(0,L)$-entry. This gives
\begin{equation}
\begin{split}
\overline{b_{(n+1)(L+1)+i}} \Phi_{n+1}(z)_{i,i} + a_{n(L+1)+i}\Phi_{n}(z)_{i,i} 
&\,= \Psi_n(z)_{i+1,i+1} \bigl( \overline{b_{L+1+i,L}} \Phi_1(z)_{i,i}+ a_i \bigr), \\  
\overline{b_{n(L+1)+L,L}} \Phi_{n}(z)_{L,L} + a_{(n-1)(L+1)+L}\Phi_{n-1}(z)_{L,L} 
&\,= \Psi_n(z)_{0,0}\, \overline{b_{L,L}}, 
\end{split}
\end{equation}
where the first equation holds for $0\leq i <L$. The second result of Corollary \ref{cor:lem:JonFeigenfunctionJastJandback}   to non-zero entries only for the $(i,i+1)$-the entry for $0\leq i <L$ and for the  
$(L,0)$-th entry. This gives 
\begin{equation}
\begin{split}
a_{n(L+1)+i} \Psi_{n+1}(z)_{i+1,i+1} + b_{n(L+1)+i,L}\Psi_{n}(z)_{i+1,i+1} 
&\,= \Phi_n(z)_{i,i} \, a_i, \\  
a_{n(L+1)+L} \Psi_{n+1}(z)_{0,0} + b_{n(L+1)+L,L}\Psi_{n}(z)_{0,0} 
&\,= \Phi_n(z)_{L,L} \bigl( a_L \Psi_1(z)_{0,0} + b_{L,L} \bigr),  
\end{split}
\end{equation}
where the first equation holds for $0\leq i <L$. 

In the symmetric case, the recursion \eqref{eq:MOPPn} specialises to the recursion
\begin{equation}\label{eq:symmcaseassociatedPn}
\de_{n,0}\Id + zP^{(1)}_n(z) = A_n P^{(1)}_{n+1}(z) + A_{n-1}^\ast P^{(1)}_{n-1}(z),
\qquad n\in \N,
\end{equation}
for the associated polynomials $P^{(1)}_n$. Iterating 
\eqref{eq:symmcaseassociatedPn} gives 
\begin{equation}\label{eq:symmcaseassociatedPn2}
\de_{n,0} z \Id + \de_{n,1}A_0^\ast + z^2 P^{(1)}_n(z) = 
A_n A_{n+1}P^{(1)}_{n+2}(z) + \bigl( A_nA_n^\ast + A_{n-1}^\ast A_{n-1}\bigr) 
P^{(1)}_{n}(z) + A_{n-1}^\ast A_{n-2}^\ast P^{(1)}_{n-2}(z)
\end{equation}
and restricting to the case $n=2p$ and $n=2p+1$ then links to the 
associated polynomials $Q^{e,(1)}_p$ and $Q^{o,(1)}_p$ which 
solve \eqref{eq:defMOPQn} with initial conditions 
$Q^{e,(1)}_0(z)=0=Q^{o,(1)}_0(z)$ and 
$Q^{e,(1)}_1(z)=(A_0A_1)^{-1}=(S^{12}_0T^{21}_0)^{-1}$ and $Q^{o,(1)}_1(z)=(A_1A_2)^{-1} = (T^{21}_0 S^{12}_0)^{-1}$.
Note that the associated polynomials 
$Q^{e,(1)}_p$ and $Q^{o,(1)}_p$ are diagonal. 

\begin{prop}\label{prop:relationassociatedQnPhinPsininsymmetriccase} 
In the symmetric case, the associated polynomials  are related by  
\begin{equation*}
\widetilde{P}^{(1)}_{2p}(z) = Q^{e,(1)}_p(z^2)z, 
\qquad 
\widetilde{P}^{(1)}_{2p+1}(z) = UQ^o_p(z^2)U^{-1} U (S^{12}_0)^{-1} + UQ^{o,(1)}_p(z^2)U^{-1} U (S^{12}_0)^\ast  
\end{equation*}
as diagonal matrix polynomials. 
\end{prop}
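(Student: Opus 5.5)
The plan is to work directly with the iterated recurrence \eqref{eq:symmcaseassociatedPn2} and split it into its even ($n=2p$) and odd ($n=2p+1$) parts. Each part is a matrix three-term recurrence in the variable $z^2$, with an inhomogeneous term at $p=0$ only. The coefficients are the diagonal matrices of \eqref{eq:defMOPQn}. From \eqref{eq:MOPPn}, $A_{2p}=S^{12}_p$ and $A_{2p+1}=T^{21}_p$, so:
\begin{itemize}
\item in the even case, $A_{2p}A_{2p+1}=S^{12}_pT^{21}_p$ and $A_{2p}A_{2p}^\ast+A_{2p-1}^\ast A_{2p-1}=S^{12}_p(S^{12}_p)^\ast+(T^{21}_{p-1})^\ast T^{21}_{p-1}$;
\item in the odd case, $A_{2p+1}A_{2p+2}=T^{21}_pS^{12}_{p+1}$, and the middle coefficient is as in \eqref{eq:defMOPQn}.
\end{itemize}
The coefficients act from the left and the leading ones are invertible. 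Hence any solution is uniquely determined by its values at $p=0$ and $p=1$. Moreover, if $G_p$ solves the recurrence with inhomogeneity $\de_{p,0}\Id$, then $G_pD$ solves it with inhomogeneity $\de_{p,0}D$.

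\textbf{Even part.} Taking $n=2p$ in \eqref{eq:symmcaseassociatedPn2}, the inhomogeneity is $\de_{p,0}z\Id$ (the $\de_{n,1}$ term does not occur). The associated polynomials $Q^{e,(1)}_p(x)$ satisfy the analogue of \eqref{eq:genMOP3associatedtermrecurrence} with inhomogeneity $\de_{p,0}\Id$. Substituting $x=z^2$ and multiplying on the right by the scalar $z$ shows that $p\mapsto Q^{e,(1)}_p(z^2)z$ solves the same equation as $p\mapsto P^{(1)}_{2p}(z)$.

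It remains to match the initial values.
\begin{itemize}
\item At $p=0$: $P^{(1)}_0=0=Q^{e,(1)}_0$.
\item At $p=1$: the $n=1$ case of \eqref{eq:symmcaseassociatedPn} gives $zP^{(1)}_1=A_1P^{(1)}_2$, since $P^{(1)}_0=0$. With $P^{(1)}_1=A_0^{-1}$ this yields $P^{(1)}_2(z)=zA_1^{-1}A_0^{-1}=z(A_0A_1)^{-1}=zQ^{e,(1)}_1(z^2)$.
\end{itemize}
By uniqueness, $P^{(1)}_{2p}(z)=Q^{e,(1)}_p(z^2)z$. This equals $\widetilde P^{(1)}_{2p}$, since no unitary is applied in even degree.

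\textbf{Odd part.} Taking $n=2p+1$, the inhomogeneity is $\de_{p,0}A_0^\ast=\de_{p,0}(S^{12}_0)^\ast$. The general solution of the odd recurrence with this inhomogeneity is
\[
Q^o_p(z^2)C+Q^{o,(1)}_p(z^2)(S^{12}_0)^\ast
\]
for a constant matrix $C$. Here $Q^o_p C$ solves the homogeneous equation, and $Q^{o,(1)}_p(S^{12}_0)^\ast$ produces the inhomogeneity.

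To determine $C$, evaluate at $p=0$: $P^{(1)}_1=(S^{12}_0)^{-1}$, $Q^o_0=\Id$ and $Q^{o,(1)}_0=0$, so $C=(S^{12}_0)^{-1}$. For uniqueness I would also check $p=1$. The value $Q^{o,(1)}_1=(A_1A_2)^{-1}$ should reproduce $P^{(1)}_3$ computed from \eqref{eq:symmcaseassociatedPn}; this is consistent with the fact that \eqref{eq:symmcaseassociatedPn2} is derived from \eqref{eq:symmcaseassociatedPn}.

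Multiplying the result on the left by $U$ and inserting $U^{-1}U$ gives the stated formula for $\widetilde P^{(1)}_{2p+1}=UP^{(1)}_{2p+1}$.

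\textbf{Diagonality.} $UQ^o_p(z^2)U^{-1}$ is diagonal, and $U(S^{12}_0)^{-1}=D$ is diagonal, as noted after the previous proposition. Hence $S^{12}_0=D^{-1}U$, so $(S^{12}_0)^\ast=U^{-1}(D^{-1})^\ast$. Therefore $U(S^{12}_0)^\ast=(D^{-1})^\ast$ is diagonal as well.

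\textbf{Main obstacle.} The main risk is bookkeeping. One must correctly identify the inhomogeneous terms in \eqref{eq:symmcaseassociatedPn2}, namely $\de_{n,0}z\Id$ and $\de_{n,1}A_0^\ast$. One must also check that the coefficients of the even and odd subsequences match \eqref{eq:defMOPQn} exactly, including the boundary conventions $A_{-1}=0$ and $P^{(1)}_{-1}=0$ at $p=0$. I would verify these first by writing out $n=0,1,2$ explicitly.
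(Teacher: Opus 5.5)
Your proposal is correct and follows essentially the paper's route. You split the iterated recurrence \eqref{eq:symmcaseassociatedPn2} into $n=2p$ and $n=2p+1$, compare with the recurrences for $Q^{e,(1)}_p$ and for $Q^o_p$, $Q^{o,(1)}_p$ in the variable $z^2$, and fix the constants from initial values.

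The only real difference is in the odd case. You read off the coefficient $(S^{12}_0)^\ast=A_0^\ast$ of $Q^{o,(1)}_p$ directly from the inhomogeneity $\de_{p,0}A_0^\ast$. The paper instead makes an ansatz with unknown $C$ and $D$ and solves for $D$ by comparing explicit formulas for $P^{(1)}_3$ and $Q^o_1$ at $p=1$. In your version that $p=1$ check, which you hedge on, is automatic. The $p=0$ equation has no lower term because $A_{-1}=0$, and its leading coefficient is invertible, so it determines the value at $p=1$ from the value at $p=0$.
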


Note that Proposition \ref{prop:relationassociatedQnPhinPsininsymmetriccase} 
shows that the associated polynomials $P^{(1)}_{2p}(z)$ of even degree are diagonal, but the 
 associated polynomials $P^{(1)}_{2p+1}(z)$ of odd degree are not diagonal. 

\begin{proof} Taking $n=2p$ in \eqref{eq:symmcaseassociatedPn2} gives 
\begin{equation*}
\de_{p,0}z\Id + z^2P^{(1)}_{2p}(z) 
= A_{2p}A_{2p+1} P^{(1)}_{2(p+1)}(z) 
+ \bigl( A_{2p}A^\ast_{2p}+ A^\ast_{2p-1}A_{2p-1}\bigr) P^{(1)}_{2p}(z)
+ A_{2p-1}^\ast A_{2p-2}^\ast P^{(1)}_{2(p-1)}(z)
\end{equation*}
and comparing with the recurrence for the associated polynomials 
$Q^{e,(1)}_p(w)$ with $w=z^2$ gives the first result. 

Taking $n=2p+1$ in \eqref{eq:symmcaseassociatedPn2} gives 
\begin{multline*}
\de_{p,0}A_0^\ast + z^2P^{(1)}_{2p+1}(z) 
= \\ A_{2p+1}A_{2p+2} P^{(1)}_{2(p+1)+1}(z) 
+ \bigl( A_{2p+1}A^\ast_{2p+1}+ A^\ast_{2p}A_{2p}\bigr) P^{(1)}_{2p+1}(z)
+ A_{2p}^\ast A_{2p-1}^\ast P^{(1)}_{2(p-1)+1}(z)
\end{multline*}
and comparing with the recurrence for the polynamials $Q^o_p(w)$ and 
its associated $Q^{o,(1)}_p(w)$ with $w=z^2$ gives 
$P^{(1)}_{2p+1}(z) = Q^o_p(z^2)C + Q^{o,(1)}_p(z^2)D$ for suitable 
matrices $C$ and $D$. Taking $p=0$ gives 
$A_0^{-1} = P^{(1)}_{1}(z) = C$, and next taking $p=1$ 
gives 
$P^{(1)}_{3}(z) = Q^o_1(z^2)A_0^{-1} + Q^{o,(1)}_1(z^2)D$
and plugging in $P^{(1)}_{3}(z) = A_2^{-1}(z^2A_1^{-1}-A_1^\ast)A_0^{-1}$ 
and $Q^o_1(z^2)= A_2^{-1}A_1^{-1}(z^2\Id -(A_1A_1^\ast + A_0^\ast A_0))$ 
gives $D=A_0^\ast$. 
Now $A_0=S^{12}_0$ gives the result for the polynomials 
$P_n$. Now multiply with either the identity or the unitary $U$.
\end{proof}

In the symmetric case the Stieltjes transforms of the scalar orthogonal polynomials on the diagonal can be connected by specialising the results in Section \ref{sec:relationbetweenMOP} and Section \ref{sec:polardecompositions}, see also 
Section \ref{sec:klr-constant-symmetric} for an explicit case.

%%%%%%%%%%%%%%%%%%%%%%%%%%%%%%%%%
%%%%%%%%%%%%%%%%%%%%%%%%%%%%%%%%%
%%%%%%%%%%%%%%%%%%%%%%%%%%%%%%%%%
%%%%%%%%%%%%%%%%%%%%%%%%%%%%%%%%%

\section{The symmetric constant case for \texorpdfstring{$L=2$}{L=2}}
\label{sec:klr-constant-symmetric}

In this section we consider the symmetric case of Section \ref{sec:symmetriccase} with
constant coefficients and $L=2$. It should be clear from this section that the approach can be extended to more general $L$. 
We determine the polynomials $\Phi_n$
and $\Psi_n$, their orthogonality measures and their Stieltjes transforms,
and verify the relations obtained in Sections \ref{sec:relationbetweenMOP}
and \ref{sec:polardecompositions}.

Let $a>0$ and $b\in\R\setminus\{0\}$. We set
\[
 a_n=a,\qquad b_{n,0}=b_{n,1}=0\quad(n\in\N),
 \qquad b_{n,2}=b\quad(n\geq2).
\]
The recurrence
\eqref{eq:recursionscalarLorthopols} becomes
$zp_n(z) = ap_{n+1}(z)$ for $n\in \{0,1\}$ and 
\[
zp_n(z)=ap_{n+1}(z)+bp_{n-2}(z),\qquad n\in\N_{\geq 2},
\]
with initial value $p_0(z)=1$.
Substituting these coefficients into \eqref{eq:JJastonF} and
\eqref{eq:JastJonF}, we obtain
\begin{equation}\label{eq:klr-constant-coefficients}
 \begin{aligned}
 A'_n=A''_n&=ab\Id, &&n\geq0,\\
 B'_n=B''_n&=(a^2+b^2)\Id, &&n\geq1,
 \end{aligned}
\end{equation}
with initial diagonal blocks
\[
 B'_0 =\operatorname{diag}(a^2,a^2,a^2+b^2),\qquad 
 B''_0 =\operatorname{diag}(b^2,a^2+b^2,a^2+b^2).
\]
Note that this shows that $JJ^\ast-J^\ast J$ is a finite rank operator (of rank $2$), and so $J$ is essentially normal, see \cite{Davi}. 

In order to study the spectral decomposition of $JJ^\ast$ and 
$J^\ast J$ we need a perturbation of the Chebyshev polynomials. 
Let $U_n$ denote the Chebyshev polynomials of the second kind. Then
\[
 U_{-1}(x)=0,\qquad U_0(x)=1,\qquad
 2xU_n(x)=U_{n+1}(x)+U_{n-1}(x),\quad n\geq0.
\]
In order to write to relate this recurrence relation to \eqref{eq:defPhiPsiMOP}, we set
\begin{equation}\label{eq:klr-constant-variable}
 x(\la)=\frac{\la-a^2-b^2}{2ab}.
\end{equation}
Substituting \eqref{eq:klr-constant-coefficients} into
\eqref{eq:defPhiPsiMOP}, dividing by $ab$, and combining the terms of degree $n$, we obtain
\begin{equation}\label{eq:klr-constant-recurrences}
 \begin{aligned}
 2x(\la)\Phi_n(\la)&=\Phi_{n+1}(\la)+\Phi_{n-1}(\la),\\
 2x(\la)\Psi_n(\la)&=\Psi_{n+1}(\la)+\Psi_{n-1}(\la),
 \end{aligned}
\end{equation}
for  $n\geq1$. Since $\Phi_0=\Psi_0=\Id$, the recurrence at $n=0$ and the initial
diagonal blocks give
\[
 \begin{aligned}
 \Phi_1(\la)
 &=\frac{\la\Id-B'_0}{ab}
 =2x(\la)\Id+\frac ba\operatorname{diag}(1,1,0),\\
 \Psi_1(\la)
 &=\frac{\la\Id-B''_0}{ab}
 =2x(\la)\Id+\frac ab\operatorname{diag}(1,0,0).
 \end{aligned}
\]

Both $U_n(x)$ and $U_{n-1}(x)$ satisfy the recurrence
\eqref{eq:klr-constant-recurrences}. Comparing the initial values at
$n=0,1$, we obtain
\begin{equation}\label{eq:klr-constant-polynomials}
 \begin{aligned}
 \Phi_n(\la)
 &=U_n(x(\la))\Id
   +\frac ba\,U_{n-1}(x(\la))\operatorname{diag}(1,1,0),\\
 \Psi_n(\la)
 &=U_n(x(\la))\Id
   +\frac ab\,U_{n-1}(x(\la))\operatorname{diag}(1,0,0),
 \end{aligned}
\end{equation}
for $n\geq0$.  

Note that if we replace $b$ by $-b$, then $x(\la)$ changes to $-x(\la)$.
Since $U_n(-x)=(-1)^nU_n(x)$, the formulas
\eqref{eq:klr-constant-polynomials} show that both $\Phi_n$ and $\Psi_n$
are multiplied by $(-1)^n$.

The matrix measures follow from the orthogonality
relations for the scalar orthogonal polynomials that are combinations of the Chebyshev polynomials of the form 
$q_n(x)=U_n(x)-2rU_{n-1}(x)$ for several values of $r$ using 
\eqref{eq:klr-constant-polynomials}.
We write $\delta_y$ for the unit point mass at $y$.

\begin{lemma}\label{lem:klr-constant-chebyshev}
Let $r\in\R$. The polynomials
\[
 q_n(x)=U_n(x)-2rU_{n-1}(x),\qquad n\geq0,
\]
are orthonormal with respect to the probability measure
\begin{equation}\label{eq:klr-constant-scalar-measure}
 \frac2\pi\frac{\sqrt{1-x^2}}{(1-4rx+4r^2)}\mathbf1_{(-1,1)}(x)\,dx
 +\begin{cases}
 \displaystyle\left(1-\frac1{4r^2}\right)\delta_{r+1/(4r)},
                       &|r|>\frac12,\\[2pt]
 0,                    &|r|\leq\frac12.
 \end{cases}
\end{equation}
\end{lemma}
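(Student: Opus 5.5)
Since the $q_n$ satisfy the Chebyshev recurrence $2xq_n=q_{n+1}+q_{n-1}$ for $n\geq1$, together with $q_0=1$ and $q_1(x)=2x-2r$, they are orthonormal polynomials for the Jacobi matrix with off-diagonal entries all equal to $\tfrac12$ and diagonal entries $b_0=r$, $b_n=0$ for $n\geq1$. So $xq_0=\tfrac12 q_1+rq_0$, a rank-one perturbation of the free Jacobi matrix at the $(0,0)$-entry. By the scalar case of Section \ref{ssec:differenceeqtnmatrixOP} (Favard), there is a unique probability measure $\theta$ making the $q_n$ orthonormal; it remains to identify it. I would compute its Stieltjes transform $m(z)=\int (x-z)^{-1}\,d\theta(x)$ from the asymptotically free (Weyl) solution, and then apply Stieltjes--Perron inversion.

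\emph{Computing $m$.} Put $z=\tfrac12(s+s^{-1})$ with $|s|<1$. For $n\geq1$ the Chebyshev recurrence has the decaying solution $f_n=s^{n}$, so the $\ell^2$ solution of $(J_r-z)f=\delta_0$ is $f_n=cs^n$. The $n=0$ row reads $\tfrac12 f_1+(r-z)f_0=1$, hence $c\bigl(\tfrac s2+r-z\bigr)=1$, that is, $c=\bigl(r-\tfrac1{2s}\bigr)^{-1}$. Thus
\[
m(z)=f_0=\frac{2s}{2rs-1}.
\]
For $r=0$ this gives $m=-2s$, the transform of the semicircle law $\tfrac2\pi\sqrt{1-x^2}\,dx$, as it should.

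\emph{Inversion.} On $(-1,1)$ we have $s\to e^{-i\phi}$ with $x=\cos\phi$, approached from $\Im z>0$. Then $\Im m(x+i0)/\pi$ should equal
\[
\frac{2}{\pi}\,\frac{\sin\phi}{|2re^{-i\phi}-1|^2}=\frac2\pi\,\frac{\sqrt{1-x^2}}{1-4rx+4r^2}.
\]
This is a direct computation: $\Im\bigl(2s/(2rs-1)\bigr)=\Im\bigl(2s(2r\bar s-1)\bigr)/|2rs-1|^2=-2\Im s/|2rs-1|^2$. Point masses arise only at poles of $m$ off $[-1,1]$, namely where $s=1/(2r)$ with $|s|<1$, i.e.\ $|r|>\tfrac12$. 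This gives $z=\tfrac12\bigl(\tfrac1{2r}+2r\bigr)=r+\tfrac1{4r}$. The mass is $-\operatorname{Res}_{z=z_0}m$. Using $dz/ds=\tfrac12(1-s^{-2})$ and the residue in $s$ of $2s/(2r(s-s_0))$, which is $s_0/r$, the residue in $z$ is $\tfrac{s_0}{r}\cdot\tfrac12(1-s_0^{-2})=\tfrac1{4r^2}-1$. So the mass is $1-\tfrac1{4r^2}$, matching the statement.

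\emph{Checks and obstacle.} The main obstacle is bookkeeping rather than ideas: choosing the correct branch of $s$ (the one with $|s|<1$ on $\C\setminus[-1,1]$), getting the sign of the boundary value, and confirming that for $|r|\leq\tfrac12$ there is no pole. When $|r|=\tfrac12$ the would-be pole sits at $s=\pm1$, the edge of the spectrum, where it contributes no mass, since the density is integrable, $\sim(1\mp x)^{-1/2}$. Finally, I would check the total mass as a consistency test: $-zm(z)\to1$ as $z\to\infty$ (using $s\sim 1/(2z)$) confirms that the measure is a probability measure. Alternatively, total mass $1$ follows from $q_0=1$ being normalized.
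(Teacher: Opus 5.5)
Your proof is correct, but it takes a different route from the paper. The paper recognises $q_n$ as the $q=0$ specialisation of the Askey--Wilson polynomials with parameters $(2r,0,0,0)$. It then reads off the weight and the atom at $\tfrac12\bigl(2r+(2r)^{-1}\bigr)$ from the known Askey--Wilson orthogonality relation \cite[(4.32)]{AskeW1985}, which covers $|r|\neq\tfrac12$. The cases $r=\pm\tfrac12$ are obtained as limits from $|r|<\tfrac12$ by dominated convergence.

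You instead observe that the $q_n$ are the orthonormal polynomials of the free Jacobi matrix perturbed in the $(0,0)$ entry. From the decaying solution $s^n$ you get the Weyl function in closed form, $m(z)=2s/(2rs-1)$ with $z=\tfrac12(s+s^{-1})$, $|s|<1$, and then apply Stieltjes--Perron inversion. Your boundary-value computation, the location $r+1/(4r)$ and the weight $1-1/(4r^2)$ are all right.

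The paper's route is shorter and places the lemma inside the Askey--Wilson scheme, but it rests on an external formula plus a limiting argument. Yours is self-contained and in the spirit of Section~\ref{ssec:differenceeqtnmatrixOP}. It also gives the following for free:
\begin{itemize}
\item orthonormality and the probability normalisation, from the spectral theorem;
\item a direct treatment of $|r|=\tfrac12$;
\item after the substitution $\lambda=a^2+b^2+2abx$, the transforms $f$ and $g$ computed later in Section~\ref{sec:klr-constant-symmetric}.
\end{itemize}

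One point to tighten: for $|r|=\tfrac12$, the absence of an atom at $\pm1$ does not follow from integrability of the density, since an atom could sit on top of an integrable density. Argue instead that $|m(z)|=O(|z\mp1|^{-1/2})$ near the relevant endpoint, so $\epsilon\,|m(\pm1+i\epsilon)|\to0$; for $|r|\neq\tfrac12$, $m$ is even bounded near $\pm1$. Alternatively, check that the absolutely continuous part already has total mass $1$.
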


Note that for $r=\tfrac12$ and $r=-\tfrac12$, the continuous part of
\eqref{eq:klr-constant-scalar-measure} is a Jacobi weight with
parameters $(\alpha,\beta)=(-\tfrac12,\tfrac12)$ and
$(\tfrac12,-\tfrac12)$, respectively. The corresponding $q_n$ are
known as the Chebyshev polynomials of the third and fourth kind. 

\begin{proof}
Upon comparing  with the Askey--Wilson polynomials
in \cite[(4.28)-(4.32)]{AskeW1985} for $q=0$, we see that $(q_n)_{n\geq0}$ is their
specialization with $q=0$ and parameters $(2r,0,0,0)$.
The orthogonality measure \eqref{eq:klr-constant-scalar-measure} for
$|r|\ne\tfrac12$ follows from \cite[(4.32)]{AskeW1985}.
The cases $r=\pm\tfrac12$ follow by dominated convergence from
$|r|<\tfrac12$; the point mass is absent at the limit.
This completes the proof.
\end{proof}

To state the matrix measures, we put
\[
 \la_-=(a-|b|)^2,\qquad \la_+=(a+|b|)^2,
\]
\[
 h(\la)=\sqrt{(\la_+-\la)(\la-\la_-)},
 \qquad \la_-<\la<\la_+,
\]
and write $u_+=\max\{u,0\}$ for $u\in\R$.

\begin{prop}\label{prop:klr-constant-measures}
The orthogonality measures for the matrix polynomials in  \eqref{eq:klr-constant-polynomials},
normalised by $\mu(\R)=\nu(\R)=\Id$, are
\begin{align*}
 d\mu(\la)&=\frac{h(\la)}{2\pi a^2b^2}
 \operatorname{diag}\left(\frac{a^2}{\la},\frac{a^2}{\la},1\right)
 \mathbf1_{(\la_-,\la_+)}(\la)\,d\la + \left(1-\frac{a^2}{b^2}\right)_+
      \operatorname{diag}(1,1,0)\,\delta_0,
\\[3pt]
 d\nu(\la)&=\frac{h(\la)}{2\pi a^2b^2}
 \operatorname{diag}\left(\frac{b^2}{\la},1,1\right)
 \mathbf1_{(\la_-,\la_+)}(\la)\,d\la +\left(1-\frac{b^2}{a^2}\right)_+
      \operatorname{diag}(1,0,0)\,\delta_0.
\end{align*}
\end{prop}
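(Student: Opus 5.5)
Since $\Phi_n$ and $\Psi_n$ in \eqref{eq:klr-constant-polynomials} are diagonal, the orthogonality measures $\mu$ and $\nu$ are diagonal. Each diagonal entry is the probability measure of the corresponding scalar family. Every diagonal entry is a polynomial sequence of the form $q_n(x(\la))=U_n(x)-2rU_{n-1}(x)$, so the plan is to read off $r$ for each entry, apply Lemma \ref{lem:klr-constant-chebyshev}, and push the measure forward under $\la = a^2+b^2+2ab\,x$. We normalise by $\mu(\R)=\nu(\R)=\Id$; the leading coefficients of the $\Phi_n$, $\Psi_n$ are positive multiples of the identity, so orthonormality in $\la$ matches orthonormality in $x$.

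For $\mu$ the entries $(0,0)$ and $(1,1)$ have $-2r=b/a$, i.e. $r=-b/(2a)$, and the entry $(2,2)$ has $r=0$. For $\nu$ the entry $(0,0)$ has $r=-a/(2b)$, and the entries $(1,1)$ and $(2,2)$ have $r=0$.

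\textbf{Case $r=0$.} The measure is $\frac{2}{\pi}\sqrt{1-x^2}\,dx$. Under the substitution, $dx=d\la/(2ab)$ and
\[
1-x^2=\frac{(2ab)^2-(\la-a^2-b^2)^2}{4a^2b^2}=\frac{(\la_+-\la)(\la-\la_-)}{4a^2b^2}.
\]
This uses $\la_\pm=a^2+b^2\pm2a|b|$ and $(2ab)^2=(2a|b|)^2$. Hence $\sqrt{1-x^2}=h(\la)/(2a|b|)$, and the density becomes $\frac{2}{\pi}\cdot\frac{h}{2a|b|}\cdot\frac{1}{2a|b|}=\frac{h(\la)}{2\pi a^2b^2}$. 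This gives the entries with value $1$. The $x$-interval $(-1,1)$ maps onto $(\la_-,\la_+)$ for either sign of $b$; for $b<0$ the orientation is reversed, but with absolute values that is harmless.

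\textbf{Case $r=-b/(2a)$.} Here
\[
1-4rx+4r^2=1+\frac{2b}{a}\cdot\frac{\la-a^2-b^2}{2ab}+\frac{b^2}{a^2}=\frac{\la}{a^2},
\]
which produces the factor $a^2/\la$. The Lemma gives a point mass when $|r|>\tfrac12$, i.e. $|b|>a$. The mass is $1-\frac{1}{4r^2}=1-\frac{a^2}{b^2}$, located at $x=r+\frac{1}{4r}=-\frac{b}{2a}-\frac{a}{2b}=-\frac{a^2+b^2}{2ab}$, which corresponds to $\la=0$. Combining with $(\cdot)_+$ covers both regimes, including $|r|=\tfrac12$.

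\textbf{Case $r=-a/(2b)$.} Symmetrically, $1-4rx+4r^2=\la/b^2$, which gives the factor $b^2/\la$. There is a mass $1-b^2/a^2$ at $\la=0$ when $a>|b|$.

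The main thing to check carefully is the sign bookkeeping when $b<0$: that $x(\la)$ still maps $(\la_-,\la_+)$ onto $(-1,1)$ and that the location of the point mass still maps to $\la=0$. Both follow from the formulas above, since they are symmetric under $b\mapsto-b$ together with $x\mapsto-x$, as remarked after \eqref{eq:klr-constant-polynomials}. A second point to check is that the point mass lies outside $[-1,1]$, consistent with $0\notin[\la_-,\la_+]$ when $a\neq|b|$.
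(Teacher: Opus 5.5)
Your proposal is correct and follows essentially the paper's argument: you identify the parameter $r$ in each diagonal entry ($r=-b/(2a)$, $r=-a/(2b)$ or $r=0$), apply Lemma~\ref{lem:klr-constant-chebyshev}, and transport the measure under $\la=a^2+b^2+2abx$, with the atom landing at $\la=0$. One harmless slip is that for $b<0$ the leading coefficient of $\Phi_n$ and $\Psi_n$ is $(ab)^{-n}\Id$, which is negative for odd $n$; orthonormality transfers under the change of variables regardless of this sign, so nothing in your argument depends on it.
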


Note that the measures $\mu$ and $\nu$ cannot both have a discrete mass point at $0$. The matrix measure $\mu$ has a discrete mass point if $a^2<b^2$ and $\nu$ has a discrete mass point if $a^2>b^2$. 

\begin{proof}
From \eqref{eq:klr-constant-variable}, we have
\[
 \sqrt{1-x(\la)^2}=\frac{h(\la)}{2a|b|},\qquad
 \left|\frac{dx}{d\la}\right|=\frac1{2a|b|}.
\]
Thus the Chebyshev measure becomes
$h(\la)\,d\la/(2\pi a^2b^2)$ for either sign of $b$.
The first two entries of $\Phi_n$ correspond to $r=-b/(2a)$ in
Lemma~\ref{lem:klr-constant-chebyshev}, whereas the first entry of
$\Psi_n$ corresponds to $r=-a/(2b)$. The denominators of \eqref{eq:klr-constant-scalar-measure} simplify to
\[
 1+2\frac ba x(\la)+\frac{b^2}{a^2}=\frac{\la}{a^2},\qquad
 1+2\frac ab x(\la)+\frac{a^2}{b^2}=\frac{\la}{b^2}.
\]
This gives the continuous parts of the measures. The possible point
mass in either case is at
\[
 x=-\frac12\left(\frac ab+\frac ba\right),\qquad \la=0.
\]
The condition $|r|>1/2$ and the weight $1-1/(4r^2)$ give the
mass matrices. In particular, $\mu$ has a nonzero mass at zero exactly
when $a<|b|$, and $\nu$ has one exactly when $a>|b|$.
Scalar orthogonality in each diagonal entry now gives
\[
 \int\Phi_n(\la)\,d\mu(\la)\,\Phi_m(\la)^\ast=\de_{n,m}\Id,
 \qquad
 \int\Psi_n(\la)\,d\nu(\la)\,\Psi_m(\la)^\ast=\de_{n,m}\Id.
\]
This completes the proof.
\end{proof}

The measures $\mu$ and $\nu$ depend only on $a$ and $|b|$.
If $a=|b|$, both measures have support $[0,4a^2]$ and no point mass.
For \(0<\lambda<4a^2\), the first two diagonal entries of \(\mu\) and the first diagonal entry of \(\nu\) have density
\[
\frac{1}{2\pi a^2}\sqrt{\frac{4a^2-\lambda}{\lambda}},
\]
while the remaining diagonal entries have density
\[
\frac{\sqrt{\lambda(4a^2-\lambda)}}{2\pi a^4}.
\]
The singularity at zero in the first density is integrable.

We next compute the Stieltjes transforms, using the convention of
Section \ref{ssec:differenceeqtnmatrixOP}. For the unmodified Chebyshev
measure, we put
\[
 m_0(z)=\frac1{2\pi a^2b^2}
 \int_{\la_-}^{\la_+}\frac{h(\la)}{\la-z}\,d\la.
\]
The change of variable $\la=a^2+b^2+2a|b|x$ reduces this integral to
the Stieltjes transform of the normalised Chebyshev weight of the second
kind, see \cite[Table~18.3.1]{NIST}.
Integrating the generating function \cite[18.12.10]{NIST} against this
weight, we obtain
\begin{equation}\label{eq:klr-constant-m0}
 m_0(z)=\frac{a^2+b^2-z+\sqrt{(z-\la_-)(z-\la_+)}}{2a^2b^2}.
\end{equation}
Here the square root is analytic on \(\mathbb C\setminus[\lambda_-,\lambda_+]\), and its asymptotic behavior at infinity is
\[\sqrt{(z-\lambda_-)(z-\lambda_+)}
=z-a^2-b^2+O(z^{-1}),\qquad z\to\infty.
\]
Let
\[
 d\sigma(\lambda)=\frac{h(\lambda)}{2\pi a^2b^2}
 \mathbf{1}_{(\lambda_-,\lambda_+)}(\lambda)\,d\lambda,
\]
so that
$m_0(z)=\int_{\mathbb{R}}(\lambda-z)^{-1}\,d\sigma(\lambda)$.
The formulas for $\mu$ and $\nu$ give
\[
 \lambda\,d\mu_{11}(\lambda)=a^2\,d\sigma(\lambda),
 \qquad
 \lambda\,d\nu_{11}(\lambda)=b^2\,d\sigma(\lambda),
\]
since multiplication by $\lambda$ cancels the denominators in
their densities and removes the possible masses at zero.
Integrating against $(\lambda-z)^{-1}$, using
$\mu_{11}(\mathbb{R})=\nu_{11}(\mathbb{R})=1$ and
$\lambda/(\lambda-z)=1+z/(\lambda-z)$, we obtain
\[
 a^2m_0(z)=1+zM^\mu_{11}(z),
 \qquad
 b^2m_0(z)=1+zM^\nu_{11}(z).
\]
Hence, the matrix Stieltjes transforms are given by
\begin{equation}\label{eq:klr-constant-transforms}
 \begin{aligned}
 M^\mu(z)&=\operatorname{diag}\bigl(f(z),f(z),m_0(z)\bigr),\\
 M^\nu(z)&=\operatorname{diag}\bigl(g(z),m_0(z),m_0(z)\bigr),
 \end{aligned}
\end{equation}
where
\[
 f(z)=\frac{a^2m_0(z)-1}{z},\qquad
 g(z)=\frac{b^2m_0(z)-1}{z},\qquad
 z\in\C\setminus[0,\infty).
\]
These formulas include the possible point masses at zero.
Indeed, an atom of weight $w$ at zero contributes $-w/z$ to
the corresponding scalar Stieltjes transform, so its weight
is recovered by multiplying the transform by $-z$ and taking
the limit as $z\uparrow0$ along the negative real axis.
In the present case,
\[
 -zf(z)=1-a^2m_0(z),\qquad
 -zg(z)=1-b^2m_0(z).
\]
Taking this limit in \eqref{eq:klr-constant-m0}, we obtain
\[
 \lim_{z\uparrow0}m_0(z)=\frac1{\max\{a^2,b^2\}}.
\]
By dominated convergence, $-zM^\mu(z)$ and $-zM^\nu(z)$ tend
to the respective mass matrices at zero: the factor
$(-z)/(\la-z)$ is bounded by $1$ for $\la\geq0$ and $z<0$,
and converges to $\mathbf1_{\{0\}}(\la)$.
Thus \eqref{eq:klr-constant-transforms} gives
\[
 \begin{aligned}
 \mu(\{0\})
 &=\lim_{z\uparrow0}\bigl(-zM^\mu(z)\bigr)
 =\left(1-\frac{a^2}{b^2}\right)_+
   \operatorname{diag}(1,1,0),\\
 \nu(\{0\})
 &=\lim_{z\uparrow0}\bigl(-zM^\nu(z)\bigr)
 =\left(1-\frac{b^2}{a^2}\right)_+
   \operatorname{diag}(1,0,0).
 \end{aligned}
\]
These agree with Proposition \ref{prop:klr-constant-measures};
in particular, both masses vanish when $a=|b|$.

Substituting \eqref{eq:klr-constant-m0} into $f$ and $g$, and writing
\[
 R(z)=\sqrt{(z-\la_-)(z-\la_+)},
\]
with the branch specified above, we obtain the explicit formulas
\begin{align*}
 M^\mu(z)
 &=\operatorname{diag}\left(
 \frac{a^2-b^2-z+R(z)}{2b^2z},
 \frac{a^2-b^2-z+R(z)}{2b^2z},
 \frac{a^2+b^2-z+R(z)}{2a^2b^2}
 \right),\\
 M^\nu(z)
 &=\operatorname{diag}\left(
 \frac{b^2-a^2-z+R(z)}{2a^2z},
 \frac{a^2+b^2-z+R(z)}{2a^2b^2},
 \frac{a^2+b^2-z+R(z)}{2a^2b^2}
 \right),
\end{align*}
for  $z\in\C\setminus[0,\infty)$.

%%%%%%%%%%%%%%%%%%%%%%%%%%%%%%%%%
%%%%%%%%%%%%%%%%%%%%%%%%%%%%%%%%%
%%%%%%%%%%%%%%%%%%%%%%%%%%%%%%%%%
%%%%%%%%%%%%%%%%%%%%%%%%%%%%%%%%%

\end{document}